\documentclass[a4paper,UKenglish,cleveref, autoref, thm-restate]{lipics-v2021}
\usepackage{amsmath, amsthm, amssymb, appendix, bm, graphicx, hyperref, mathrsfs}
\usepackage{multirow}
\usepackage{multicol}
\usepackage{makecell}
\usepackage[ruled]{algorithm2e}
\usepackage{caption}
\usepackage{subcaption}
\bibliographystyle{plainurl}
\newcommand{\ii}[0]{\mathfrak{i}}

\title{Matchgate signatures under variable permutations} 


\author{Boning Meng\footnote{First author.}}{Key Laboratory of System Software (Chinese Academy of Sciences) and State Key Laboratory of Computer Science, Institute of Software, Chinese Academy of Sciences; University of Chinese Academy of Sciences, Beijing 100080, China}{mengbn@ios.ac.cn}{https://orcid.org/0009-0006-0088-1639}{}
\author{Yicheng Pan\footnote{Corresponding author.}}{Beihang University, Beijing 100191, China}{yichengp@buaa.edu.cn}{}{}
\authorrunning{Boning Meng and Yicheng Pan} 

\Copyright{Anonymous} 

\ccsdesc[100]{\textcolor{red}{Replace ccsdesc macro with valid one}} 

\keywords{Computational Complexity, Matchgate Signature, Counting CSP} 



\funding{Boning Meng is supported by National Key R\&D Program of China (2023YFA1009500), NSFC 61932002 and NSFC 62272448.}


\nolinenumbers 


\ArticleNo{}

\begin{document}

\maketitle

\begin{abstract}
In this article, we give a sufficient and necessary condition for determining whether a matchgate signature retains its property under a certain variable permutation, which can be checked in polynomial time. We also define the concept of permutable matchgate signatures, and use it to erase the gap between Pl-\#CSP and \#CSP on planar graphs in the previous study. We provide a detailed characterization of permutable matchgate signatures as well, by presenting their relation to symmetric matchgate signatures. In addition, we prove a dichotomy for Pl-$\#R_D$-CSP where $D\ge 3$ is an integer. 
\end{abstract}

\section{Introduction}

       Counting the number of perfect matchings in a graph (denoted as $\text{\#PM}$) is of great significance in counting complexity. 
        \#PM is motivated by the dimer problem in statistical physics \cite{kasteleyn1967graph,kasteleyn1961statistics,kasteleyn1963dimer,temperley1961dimer}, and two fundamental results emerge from this study. The first breakthrough occurred in 1961, when a polynomial time algorithm for $\text{\#PM}$ on planar graphs was developed by Kasteleyn, Temperley and Fisher \cite{kasteleyn1961statistics,temperley1961dimer}, known as the FKT algorithm. The second significant advancement occurred in 1979, when Valiant defined the complexity class $\text{\#P}$ and proved that $\text{\#PM}$ on general graphs is $\text{\#P}$-hard \cite{valiant1979complexity}. 
        It is the first natural counting problem discovered to be $\text{\#P}$-hard on general graphs and polynomial-time computable on planar graphs. Furthermore, in the complexity classification for counting constraint satisfaction problem (denoted as $\text{\#CSP}$) and Holant problem, $\text{\#PM}$ is also highly related to the cases in which these problems are \#P-hard in general but polynomial-time computable on planar graphs. In particular, the concept of matchgate signature, defined based on \#PM, is introduced to characterize such cases.
        
 
       A series of study have been settled to define and characterize matchgate signatures \cite{valiant2001quantum,valiant2002expressiveness,valiant2008holographic,cai2007someResultsMG,cai2009theoryMG,morton2010pfaffian,cai2013matchgates,margulies2016polynomial}. In particular, a so-called \textit{matchgate identity} (MGI) has been verified to be the necessary and sufficient condition for a signature to be a matchgate signature \cite{cai2007someResultsMG,cai2009theoryMG,cai2013matchgates}, which provides an algebraic way to characterize matchgate signatures. 

       In this article, we focus on what will happen if we permute the variables of a matchgate signature. To be more precious, we care about whether the resulting signature after the permutation remains a matchgate signature. There are two motivations for studying this issue. 
       
       Firstly, the study of counting complexity has been extended to a number of different graph classes. In particular, the complexity of \#PM on graphs that exclude a specific set of minors has been fully classified by \cite{curticapean2022parameterizing,thilikos2022killing}. On the other hand, in the study of signature grid theory, more attention are paid to the form of signatures. It is noticeable that complete complexity dichotomies have been proved for \#CSP and Pl-\#CSP, which means that each problem defined by a specific signature set $\mathcal{F}$, denoted by \#CSP$(\mathcal{F})$ or Pl-\#CSP$(\mathcal{F})$, is either polynomial-time computable or \#P-hard. Here, Pl-\#CSP asks that the incidence graph of the instance is planar, and in a certain planar embedding, the variables of each signature have to be positioned in a clockwise order. It is foreseeable that the study of these two topics will meet at some point\footnote{In fact, in an article to be appeared, we prove dichotomies for \#CSP on graphs that exclude a clique as a minor, greatly utilizing the results in this article.}.
       However, Pl-\#CSP differs from \#CSP on planar graphs, as the latter problem, denoted by \#CSP$(\mathcal{F})\langle\mathcal{PL}\rangle$, does not have the requirement of the order of the variables of each signature.  In this article, it will be demonstrated that our study erases this gap.
       
       Secondly, though matchgate signatures have already been characterized in an algebraic way (which is MGI), no alternative fully algebraic algorithm for the FKT algorithm has been identified. This is also an unresolved issue that was also noticed by \cite{carette2023compositionality} recently. The study of matchgate signatures under permutations has the potential to result in the creation of such an algorithm, as it has the potential to circumvent the utilization of the planar embedding.

        In this article, we always restrict ourselves to counting problems with Boolean domain and complex range, which means that each variable can only take value in $\{0,1\}$ and each signature has a range over $\mathbb{C}$. We achieve the following results in this article. 
        \begin{enumerate}
            \item We prove a dichotomy for Pl-\#$R_D$-CSP (Theorem \ref{plRDCSP}), which is Pl-\#CSP where each variable can only appear at most $D\ge 3$ times.
            \item We present a sufficient and necessary condition for determining whether a matchgate signature retains its property under a specific variable permutation (Theorem \ref{lem:4andpiMG}). 
            \item We define the concept of permutable matchgate signatures (Definition \ref{def:MP}), denoted as the set $\mathscr{M}_P$, and use this concept to erase the gap between \#CSP$\langle\mathcal{PL}\rangle$ and Pl-\#CSP (Theorem \ref{thmCSPpl}).
            
            \item We characterize the permutable matchgate signature in detail by showing the connection between it and the symmetric matchgate signature.
            \begin{enumerate}
                \item  We demonstrate that each permutable matchgate signature can be realized by symmetric matchgate signatures in a star-like way (Theorem \ref{lem:SymGadRealize}).
                \item For each permutable matchgate signature that may lead to \#P-hardness in the setting of \#CSP, we demonstrate that it can realize a symmetric matchgate signature that may also lead to \#P-hardness (Theorem \ref{lem:AsymtoSym}).
            \end{enumerate}
         \end{enumerate}
  
Result 1 is obtained by some slight modifications on the proof in \cite{cai2014complexity}, ensuring that the reductions persist for planar cases. It is remarkable that in the proof of Result 2, we are inspired by an alternative proof of the theorem that matchgate signatures possess MGI, given by Jerrum and recorded in \cite[Section 4.3.1]{Cai_Chen_2017}. By dividing the summation in MGI into appropriate parts, we prove this result by a non-trivial induction. In Result 3 and 4, we define and characterize permutable matchgate signatures. It is noteworthy that there are actually three types of signatures (Pinning, Parity, Matching) possess different properties, but they are all related to the corresponding symmetric matchgate signatures by our proof.

    This paper is organized as follows. 
In Section \ref{preliminareis}, we introduce the preliminaries needed in our proof. 
In Section \ref{sec:preprocessing}, we present the dichotomies for \#CSP. 
In Section \ref{secPMG}, we characterize matchgate signatures under permutations. 
In Section \ref{conclusion}, we conclude our paper.

\section{Preliminaries}\label{preliminareis}

\subsection{Counting problems}\label{preCSP}
      For a string $\alpha=\alpha_1\dots\alpha_k \in \{0,1\}^k$, the \textit{Hamming weight} of $\alpha$ is the number of $1$s in $\alpha$, denoted as $HW(\alpha)$. We use $\overline{\alpha}$ to denote the string that differs from $\alpha$ at every bit, which means $\alpha_i+\overline{\alpha}_i=1$ for each $1\le i\le k$. A \textit{signature}, or a \textit{constraint function}, is a function $f:\{0,1\}^k \to\mathbb{C}$ that maps a string of length $k$ to a complex number, where $k$ is denoted as the \textit{arity} of $f$.   A signature $f$ is said to be \textit{symmetric} if the value of $f$ depends only on the Hamming weight of the input. A symmetric signature $f$ of arity $k$ can be denoted as $[f_0,f_1,...,f_k]_k$, or simply $[f_0,f_1,...,f_k]$ when $k$ is clear from the context, where for $0\le i\le k$, $f_i$ is the value of $f$ when the Hamming weight of the input is $i$. For $c\in \mathbb{C}$, we also use the notation $c[f_0,f_1,...,f_k]$ to denote the signature $[cf_0,cf_1,...,cf_k]$. We use $\leq_T$ and $\equiv_T$ to respectively denote polynomial-time Turing reduction and equivalence. 
     
     We denote by $f^{x_i=c}$ the signature that pins the $i$th variable to $c\in\{0,1\}$:
     
     $$f^{x_i=c}(x_1,...,x_{i-1},x_{i+1},...,x_{k})=f(x_1,...,x_{i-1},c,x_{i+1},...,x_{k})$$
     For a string $\alpha\in \{0,1\}^q, q\le k$, we also define $f^\alpha=f^{x_1=\alpha_1,x_2=\alpha_2,...,x_q=\alpha_q}$, where $\alpha_i$ is the $i$th bit of $\alpha$ for $1\le i\le q$. That is, $f^\alpha$ is obtained from $f$ by pinning the first $q$ bits of $f$ to $\alpha_1,...,\alpha_q$. Equivalently speaking, $\alpha$ can be seen as an assignment to the $q$ variables $\alpha:\{1,\dots,q\}\to \{0,1\}$. In the case of a slight overuse, the notation $f^\alpha$ is sometimes employed to indicate the signature by pinning specific $q$ bits of $f$ to $\alpha_1,...,\alpha_q$ when the $q$ bits are clear from the context. 
     
    Two frameworks of counting problems, namely $\text{\#CSP}$ problem and $\text{Holant}$ problem, are of great significance as they are capable of expressing a wide range of problems. For example, problems in \cite{vertigan2005computational,cai2017holographic,cai2012spin,cai2012gadgets} can be expressed in the form of a $\text{\#CSP}$ problem and characterized by the result in \cite{guo2020complexity}. Furthermore, $\text{\#CSP}$ problem and $\text{\#PM}$ problem can be expressed in the form of $\text{Holant}$ problem. We present their definitions as follows.
    \subsubsection{\#PM}\label{PMstudy}
        For a graph $G=(V,E)$, a \textit{matching} is an edge set $M\subseteq E$ such that no pair of edges in $M$ shares a common endpoint.  Besides, if the vertices that $M$ contains are exactly $V(M)=V$, then $M$ is denoted as a \textit{perfect matching} of $G$.
        
        An instance of \#PM is a graph $G=(V,E)$ with weighted edges $w:E\to \mathbb{C}$. The weight of a matching $M$ is $w(M)=\prod_{e\in M}w(e)$. The output of the instance is the sum of the weights of all perfect matchings in $G$:
        
        $$\text{\#PM}(G)=\sum_{M: M\text{ is a perfect matching of }G}w(M)$$
        When $w(e)=1$ for each $e\in E$, the output of the instance is just the number of perfect matchings in the graph, and we denote this kind of problems as standard \#PM.


     \subsubsection{\#CSP problems}
       A \textit{counting constraint satisfaction problem} $\text{\#CSP}(\mathcal{F})$ \cite{creignou2001complexity} requires the value of an instance, which is the sum of the values over all configurations. Here, $\mathcal{F}$ is a fixed and finite set of signatures. An instance of $\text{\#CSP}(\mathcal{F})$ is specified as follows:
      
      \begin{definition}
            An \textit{instance $I$ of $\text{\#CSP}(\mathcal{F})$} has $n$ variables and $m$ signatures from $\mathcal{F}$ depending on these variables. The value of the instance then can be written as
            
        $$Z(I)=\sum_{(x_1,...,x_n)\in \{0,1\}^n}\prod_{1\le i\le m} f_i(x_{i_1},...,x_{i_k})$$
        
        where $f_1,\dots,f_m$ are signatures in $I$ and $f_i$ depends on $x_{i_1},...,x_{i_k}$ for each $1\le i\le m$.
          \label{defCSP}
      \end{definition}
        The \textit{underlying graph} of a $\text{\#CSP}(\mathcal{F})$ instance $I$ is a bipartite graph $G=(U,V,E)$, where for every constraint $f$ there is a $u_f\in U$, for every variable $x$ there is a $v_x\in V$, and $(u_f,v_x)\in E$ if and only if $f$ depends on $x$. Sometimes we also denote the value $Z(I)$ as $Z(G)$ for convenience. If each constraint function in $\mathcal{F}$ is restricted to be symmetric, we denote this kind of problem as symmetric \#CSP, or sym-\#CSP for short. 
        
        If we restrict the maximum degree of the vertices in $V$ to be no more than a constant $D$, this kind of problem is denoted as $\#R_D$-CSP in \cite{cai2014complexity}.
        If we restrict $G$ to be a planar graph with a certain planar embedding, in which the variables that $f_u$ depends are ordered counterclockwise for each $u\in U$, then we denote this problem as Pl-\#CSP.
        Pl-$\#R_D$-CSP is defined similarly.
        \begin{remark}  \label{remembd}
       It is worth noticing that $\text{Pl-\#CSP}(\mathcal{F})$ is not exactly $\text{\#CSP}(\mathcal{F})\langle \mathcal{PL}\rangle$ (\#CSP on planar graphs). In $\text{Pl-\#CSP}(\mathcal{F})$, it is not sufficient for the underlying graph of each instance to be planar; rather, each signature within the instance must also be positioned in an appropriate manner. %
   \end{remark}
        \subsubsection{Holant problems}
        A \textit{Holant problem} $\text{Holant}(\mathcal{F})$ can be seen as a $\text{\#CSP}(\mathcal{F})$ problem with the restriction that all the variables must appear exactly twice. 
        
        \begin{definition}
        An instance of $\text{Holant}(\mathcal{F})$ has an underlying graph $G=(V,E)$. Each vertex $v\in V$ is assigned a signature from $\mathcal{F}$ and each edge in $E$ represents a variable.  Here, $\mathcal{F}$ is a fixed set of signatures and usually finite. The signature assigned to the vertex $v$ is denoted as $f_v$. An assignment of $E$ is a mapping $\sigma:E\to \{0,1\}$, which can also be expressed as an assignment string $\sigma\in \{0,1\}^{|E|}$, and the value of the assignment is defined as
        
         $$\omega(\sigma)=\prod_{v\in V} f_v(\sigma)$$
        where $f_v(\sigma)=f_v(\sigma({e_{v_1}}),...,\sigma({e_{v_k}}))$ and $v$ is incident to $e_{v_1},...,e_{v_k}$.
        \par The output of the instance, or the value of $G$, is the sum of the values of all possible assignments of $E$, denoted as:
        
        $$Z(G)=\sum_{\sigma\in \{0,1\}^{|E|}}\omega(\sigma)$$ 
            \label{defHol}
        \end{definition}
        
        Furthermore, we use $\text{Holant}(\mathcal{F}_1|\mathcal{F}_2)$ represents $\text{Holant}(\mathcal{F}_1\cup \mathcal{F}_2)$ with the restriction that the underlying graph $G=(U,V,E)$ is bipartite, and each vertex $u\in U$ is assigned a signature from $\mathcal{F}_1$ while each vertex $v\in V$ is assigned a signature from $\mathcal{F}_2$.
         We denote by $\mathcal{EQ}$ the set of all equality functions. In other words, $\mathcal{EQ}=\{=_k|k\ge1\}$ where $=_k$ is the signature $[1,0,...,0,1]_k$. We also denote $\{=_k|1\le k\le D\}$ by $\mathcal{EQ}_{\le D}$ for each integer $D\ge 1$.  
         By definition, we have the following lemma. Also see Figure \ref{fig:CSPholant} for an example.
         \begin{lemma}
         Let $\mathcal{C}$ be an arbitrary graph class, $\mathcal{F}$ be an arbitrary signature set and $D\ge 1$ be an integer. Then,
         
             $$\text{\#CSP}(\mathcal{F})\langle \mathcal{C}\rangle\equiv_T\text{Holant}(\mathcal{F}|\mathcal{EQ})\langle \mathcal{C}\rangle$$ 
              $$\#R_D\text{-CSP}(\mathcal{F})\langle \mathcal{C}\rangle\equiv_T\text{Holant}(\mathcal{F}|\mathcal{EQ}_{\le D})\langle \mathcal{C}\rangle$$ 
             \label{lemcsp=hol}
         \end{lemma}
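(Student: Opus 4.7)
The plan is to exhibit a pair of structure-preserving gadget reductions between $\text{\#CSP}(\mathcal{F})$ and $\text{Holant}(\mathcal{F}|\mathcal{EQ})$ whose underlying graphs are literally the same, so that any condition on the underlying graph, including membership in $\mathcal{C}$, is automatically transferred in both directions.

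For the forward direction $\text{\#CSP}(\mathcal{F})\langle\mathcal{C}\rangle \leq_T \text{Holant}(\mathcal{F}|\mathcal{EQ})\langle\mathcal{C}\rangle$, given an instance of $\text{\#CSP}(\mathcal{F})$ with incidence graph $G=(U,V,E)$, I would keep every constraint vertex $u\in U$ as an $\mathcal{F}$-vertex carrying the same signature, and replace every variable vertex $v\in V$ of degree $k$ by an equality vertex carrying $=_k$; the $k$ edges originally incident to $v$ become the $k$ Holant edges at this equality vertex. Since $=_k$ forces all of its incident edges to take a single common value, each nonzero-weight Holant assignment corresponds bijectively to a $\text{\#CSP}$ configuration of equal value. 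The underlying graph of the new Holant instance is literally $G$, so $\mathcal{C}$-membership is preserved. The degree-$1$ case is unproblematic because $=_1=[1,1]$ simply passes the freely-summed variable through to its unique constraint.

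For the reverse direction $\text{Holant}(\mathcal{F}|\mathcal{EQ})\langle\mathcal{C}\rangle \leq_T \text{\#CSP}(\mathcal{F})\langle\mathcal{C}\rangle$, I would invert the construction: given a bipartite Holant instance with $\mathcal{F}$-vertices on one side and equality vertices on the other, each equality vertex becomes a $\text{\#CSP}$ variable and each $\mathcal{F}$-vertex becomes a $\text{\#CSP}$ constraint depending on the variables corresponding to its neighbours. The same equality-collapsing argument shows the value of each configuration is preserved, and once again the incidence graph is left unchanged.

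For the $\#R_D$-CSP version, the same two transformations apply verbatim, together with the observation that a $\text{\#CSP}$ variable occurring exactly $k$ times becomes an equality of arity exactly $k$ on the Holant side, and vice versa; restricting one side to $k\le D$ therefore corresponds exactly to restricting the other side to $\mathcal{EQ}_{\le D}$. There is no real obstacle beyond keeping track that each reduction leaves the incidence graph untouched, which it does by design; this is what makes the equivalence preserve an arbitrary graph class $\mathcal{C}$ simultaneously on both sides.
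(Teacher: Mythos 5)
Your proposal is correct and matches the paper's treatment: the paper dispenses with this lemma as holding ``by definition,'' pointing to the same identification you describe (reinterpret each degree-$k$ variable vertex as an $=_k$ signature and the incident edges as Holant variables, leaving the underlying bipartite graph literally unchanged, whence the graph class $\mathcal{C}$ and the degree bound $D$ transfer automatically in both directions). You have simply written out the details the paper leaves implicit.
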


         Besides, standard \#PM is exactly $\text{Holant}(\{[0,1,0,...,0]_k|k\in \mathbb{N}^+\})$. Suppose that $G=(V,E)$ is a graph with each vertex of degree $k$ assigned $[0,1,0,...,0]_k$. For each  assignment $\sigma$ of $E$, $\omega(\sigma)$ can only be either 0 or 1 since the value of each signature can only provide a multiplier with value 0 or 1. Furthermore, $\omega(\sigma)=1$ if and only if all edges assigned the value 1 form a perfect matching of $G$. Consequently, we have \#PM$(G)=Z(G)$. In addition, by exchanging 0 and 1, standard \#PM can also be expressed as $\text{Holant}(\{[0,...,0,1,0]_k|k\in \mathbb{N}^+\})$. Similarly, \#PM is exactly $\text{Holant}(\{[0,...,0,1,0]_k|k\in \mathbb{N}^+\}|\{[1,0,c]|c\in \mathbb{C}\})$ since $\{[1,0,c]|c\in \mathbb{C}\}$ is capable of expressing the weight of each edge.
     \begin{figure}
            \centering
            \includegraphics[height=0.2\textheight]{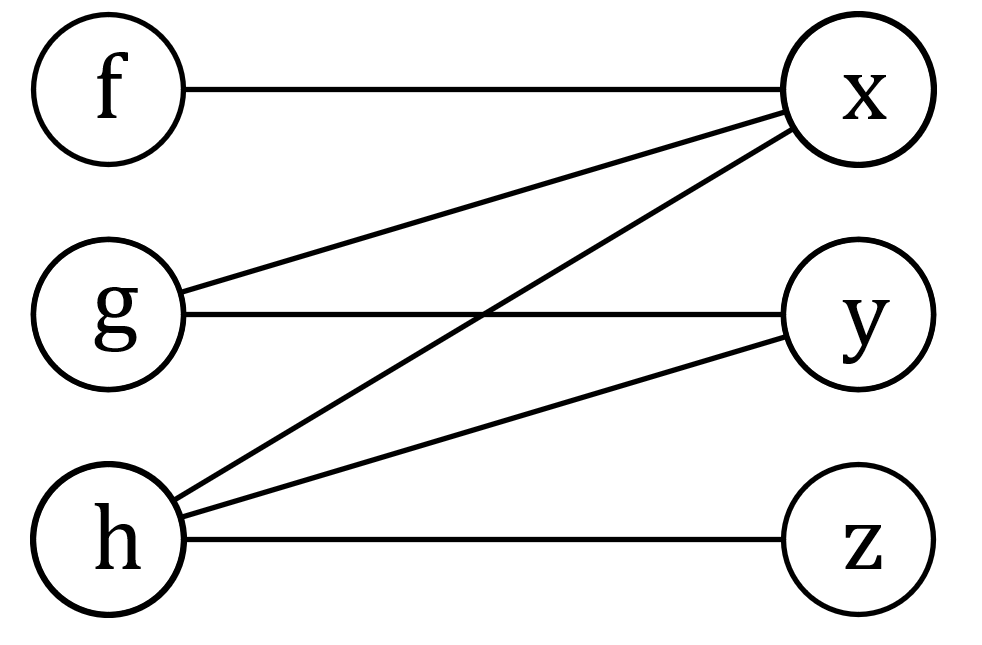}
            \caption{The underlying graph of a $\text{\#CSP}(\mathcal{F})$ instance if $f,g,h\in \mathcal{F}$ and $x,y,z$ are 3 variables; or that of a $\text{Holant}(\mathcal{F}|\mathcal{EQ})$ instance if we treat $x,y,z$ as $=_3,=_2,=_1$ and the edges as the variables instead.}
            \label{fig:CSPholant}
        \end{figure}

\subsection{Reduction methods}

\subsubsection{Constructing gadgets}

  A \textit{gadget} of $\text{Holant}(\mathcal{F})$ has an underlying graph $GG=(V,E,D)$, where $E$ is the set of normal edges and $D$ is the set of edges with only one endpoint, called \textit{dangling edges} \footnote{In order to differentiate from the notation of a graph, we use two capital letters to represent a gadget.}. Each vertex in $GG$ is still assigned a signature from $\mathcal{F}$. A signature $f$ of arity $|D|$ is said to be \textit{realized} by $GG=(V,E,D)$, if for each assignment $\alpha : D \to \{0,1\}$, $f(\alpha)=\sum_{\sigma\in \{0,1\}^{|E|}}\omega(\alpha\sigma)$, where $\alpha\sigma$ is the assignment of edges in $D\cup E$. In this case, we also say $f$ can be realized by $\mathcal{F}$. 
        By constructing gadgets with existing signatures, we are able to realize desired signatures.

        \begin{lemma}
            If $f$ can be realized by $\mathcal{F}$, then $\text{Holant}(\mathcal{F})\equiv_T\text{Holant}(\mathcal{F}\cup\{f\})$.
        \end{lemma}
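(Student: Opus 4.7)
The plan is to prove the two directions of the Turing equivalence separately. The direction $\text{Holant}(\mathcal{F}) \leq_T \text{Holant}(\mathcal{F}\cup\{f\})$ is immediate, since every instance of $\text{Holant}(\mathcal{F})$ is already an instance of $\text{Holant}(\mathcal{F}\cup\{f\})$; the identity reduction suffices. The work lies in the other direction.

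For $\text{Holant}(\mathcal{F}\cup\{f\}) \leq_T \text{Holant}(\mathcal{F})$, I would use gadget substitution. Given any instance $I$ with underlying graph $G=(V,E)$, construct an instance $I'$ of $\text{Holant}(\mathcal{F})$ as follows: for each vertex $v\in V$ whose assigned signature is $f$, delete $v$ and plug in a fresh copy of the realizing gadget $GG=(V_{GG},E_{GG},D_{GG})$, identifying its $|D_{GG}|$ dangling edges with the edges formerly incident to $v$, in the order matching the arguments of $f$. All other vertices of $G$ (those assigned signatures already in $\mathcal{F}$) are left untouched. Since $GG$ is a fixed gadget independent of the instance size, the construction is polynomial time, and the resulting $I'$ is a legitimate instance of $\text{Holant}(\mathcal{F})$.

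The main step is to verify $Z(I')=Z(I)$. I would partition the edge set of $I'$ into the \emph{outer} edges (those corresponding to edges of $G$) and the \emph{inner} edges (those internal to the copies of $GG$, i.e.\ edges in some $E_{GG}$). Any assignment of $E(I')$ factors as a pair $(\alpha,\sigma)$ where $\alpha$ assigns the outer edges and $\sigma$ assigns the inner edges. Since the product in the definition of $\omega$ separates according to which vertices lie in which gadget copy (inner edges of distinct copies are disjoint), summing over $\sigma$ factors as a product over the gadget copies. By the very definition of realization, for the gadget copy replacing $v$ with dangling-edge assignment $\alpha|_v$, the inner sum equals $f(\alpha|_v)$. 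Hence summing over $\sigma$ collapses each gadget to a single evaluation of $f$, and the remaining sum over $\alpha$ is exactly the Holant sum defining $Z(I)$.

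There is no serious obstacle here beyond careful bookkeeping: one must fix the correspondence between dangling edges of $GG$ and the variables of $f$ so that the arity and order of arguments match, and one must check that inner edges of different gadget copies do not interact. With those conventions in place, the factorization of the sum is immediate from the definitions, and the equivalence follows.
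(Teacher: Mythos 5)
Your proof is correct and is exactly the standard gadget-substitution argument that the paper leaves implicit (it states this lemma without proof as a routine fact). The factorization of the Holant sum over the disjoint inner edge sets of the gadget copies, combined with the definition of realization, is precisely the intended justification, so nothing further is needed.
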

        
        Also, we present some derivative concepts related to the concept of a gadget. A \textit{left-side gadget} of $\text{Holant}(\mathcal{F}_1|\mathcal{F}_2)$ has a bipartite underlying graph $GG=(U,V,E,D)$, where each vertex $u\in U$ is assigned a signature from $\mathcal{F}_1$, each vertex $v\in V$ is assigned a signature from $\mathcal{F}_2$, $E$ is the set of normal edges and $D$ is the set of dangling edges. Furthermore, the endpoint of each dangling edge must belong to $U$. It is easy to verify that, if $f$ can be realized by $GG$, then $\text{Holant}(\mathcal{F}_1|\mathcal{F}_2)\equiv_T\text{Holant}(\mathcal{F}_1\cup\{f\}|\mathcal{F}_2)$. The \textit{right-side gadget} is defined similarly except that the endpoint of each dangling edge must belong to $V$.
        

\subsubsection{Holographic Transformation}
     Let $T$ be a binary signature, and we denote the two dangling edges corresponding to the input variables of it as a left edge and a right edge. Its value then can be written as a matrix $T=\begin{pmatrix}
          t_{00}&t_{01}\\
          t_{10}&t_{11}
      \end{pmatrix}$, where $t_{ij}$ is the value of $T$ when the value of left edge is $i$ and that of the right edge is $j$.

    This notation is conducive to the efficient calculation of the gadget's value. Let us consider two binary signatures, $T$ and $P$, with the right edge of $T$ connected to the left edge of $P$. $T$ and $P$ now form a binary gadget. Subsequently, it can be demonstrated that the value of the resulting gadget is precisely $TP$, which represents the matrix multiplication of $T$ and $P$.
      
      For a signature $f$ of arity $n$ and a binary signature $T$, we use $Tf/fT$ to denote the signature ``$f$ transformed by $T$'', which is a signature of arity $n$ obtained by connecting the right/left edge of $T$ to every dangling edge of $f$. For a set $\mathcal{F}$ of signatures, we also define $T\mathcal{F}=\{Tf|f\in \mathcal{F}\}$. Similarly we define $\mathcal{F}T$. The following theorem demonstrates the relationship between the initial and transformed problems: 
      \begin{theorem}[{Holographic Transformation}{\cite{valiant2008holographic,cai2007valiant}}]
          $Holant(\mathcal{F}|\mathcal{G}) \equiv_T Holant(\mathcal{F}T^{-1}|T\mathcal{G})$
          \label{thmHT}
      \end{theorem}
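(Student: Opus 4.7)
The plan is to prove the equivalence by a single tensor-contraction identity rather than by any gadget construction. Since the transformation preserves the underlying graph and merely relabels each signature (each $f\in\mathcal{F}$ becomes $fT^{-1}$ and each $g\in\mathcal{G}$ becomes $Tg$), the reduction in each direction is trivially polynomial time; the only content is verifying that the Holant value is preserved.

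Given a bipartite instance of $\text{Holant}(\mathcal{F}|\mathcal{G})$ on $G=(U,V,E)$, I would rewrite
$$Z(G)=\sum_{\sigma:E\to\{0,1\}}\prod_{u\in U} f_u(\sigma)\prod_{v\in V} g_v(\sigma)$$
by inserting the scalar identity $\sum_{z\in\{0,1\}}(T^{-1})_{xz}\,T_{zy}=\delta_{xy}$ on each edge $e=(u,v)$ with $u\in U$ and $v\in V$. This replaces the single sum over $\sigma(e)\in\{0,1\}$ by a double sum whose $(T^{-1})_{xz}$ factor attaches to the $u$-side leg and whose $T_{zy}$ factor attaches to the $v$-side leg. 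Carrying this out simultaneously on every edge and regrouping factors by vertex, the $u$-side product becomes $\prod_u (f_u T^{-1})(\tau)$ and the $v$-side product becomes $\prod_v (T g_v)(\tau)$ for the new assignment $\tau:E\to\{0,1\}$. The resulting expression is exactly the Holant value of $\text{Holant}(\mathcal{F}T^{-1}|T\mathcal{G})$ on the same graph $G$, which proves one direction of $\equiv_T$. The reverse direction is entirely symmetric: apply the same edge-insertion using $TT^{-1}=I$ in place of $T^{-1}T=I$, so that $T$ absorbs into each left-side signature and $T^{-1}$ into each right-side signature, inverting the relabelling.

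The main (and essentially only) point requiring care is to line up the paper's conventions for the two orientations $fT$ and $Tg$. The preliminaries define $Tg$ by connecting the right edge of $T$ to a dangling edge of $g$, and $fT$ by connecting the left edge of $T$ to a dangling edge of $f$. Once the bipartite orientation of each inserted pair $T^{-1}T$ is fixed across every edge (left endpoint in $U$, right endpoint in $V$), the grouping described above produces precisely $fT^{-1}$ on the $\mathcal{F}$-side and $Tg$ on the $\mathcal{G}$-side, rather than transposes. The invertibility of $T$ is part of the hypothesis (otherwise $T^{-1}$ is undefined), so no extra assumption is needed and no obstacle beyond the index bookkeeping arises.
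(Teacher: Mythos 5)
The paper does not prove this statement at all: it is quoted as a known theorem (Valiant's Holant theorem) with citations to \cite{valiant2008holographic,cai2007valiant}, so there is no in-paper argument to compare against. Your proof is the standard one from those sources --- inserting $T^{-1}T=I$ on every edge of the bipartite instance and regrouping the $T^{-1}$ factors into the $\mathcal{F}$-side signatures and the $T$ factors into the $\mathcal{G}$-side signatures --- and it is correct, including the observation that the reduction itself is trivial because the instance transformation is a polynomial-time relabelling that preserves the Holant value exactly; the only point of care, which you correctly flag, is matching the left/right-edge conventions for $fT^{-1}$ and $Tg$.
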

      Let $H_2=\begin{pmatrix}
          1&1\\
          1&-1
      \end{pmatrix}$. For a set of signatures $\mathcal{F}$, we use $\widehat{\mathcal{F}}$ to denote $H_2\mathcal{F}$. As $H_2^{-1}=\frac{1}{2}H_2$, by Theorem \ref{thmHT} we have:
      
      $$Holant(\mathcal{F}|\mathcal{G}) \equiv_T Holant(\widehat{\mathcal{F}}|\widehat{\mathcal{G}})$$

     We additionally present the following fact as a lemma for future reference.
     \begin{lemma}
         For each $k\ge 1$, $\widehat{=_k}=[1,0,1,0,1,0,\dots]_k$. For example, $\widehat{=_1}=[1,0], \widehat{=_2}=[1,0,1], \widehat{=_3}=[1,0,1,0]$. Consequently, $\widehat{\mathcal{EQ}}= \{[1,0,1,0,1,0,\dots]_k| k\ge 1\}$ and for an integer $D\ge 1$, $\widehat{\mathcal{EQ}_{\le D}}= \{[1,0,1,0,1,0,\dots]_k| 1\le k\le D\}$.
         \label{transEQ}
     \end{lemma}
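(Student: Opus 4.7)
The plan is to directly compute $\widehat{=_k}$ by expanding the holographic transformation as a tensor contraction. Since $=_k = [1, 0, \ldots, 0, 1]_k$ is supported only on the all-zeros input $0^k$ and the all-ones input $1^k$, the sum over the original variables collapses to just two terms, which makes the calculation essentially a one-liner once the notation is set up.

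Concretely, I would write
$$\widehat{=_k}(y_1, \ldots, y_k) \;=\; \sum_{(x_1, \ldots, x_k) \in \{0,1\}^k} (=_k)(x_1, \ldots, x_k) \prod_{i=1}^k H_2(y_i, x_i),$$
where $H_2(y,x)$ denotes the entry of $H_2$ with left edge $y$ and right edge $x$ as in the definition of the matrix notation, so $H_2(y,0) = 1$ for all $y$ and $H_2(y,1) = (-1)^y$. Keeping only the two surviving terms $x = 0^k$ and $x = 1^k$, the first contributes $\prod_{i=1}^k 1 = 1$ and the second contributes $\prod_{i=1}^k (-1)^{y_i} = (-1)^{HW(y)}$. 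Hence
$$\widehat{=_k}(y_1,\ldots,y_k) \;=\; 1 + (-1)^{HW(y)},$$
which equals $2$ when $HW(y)$ is even and $0$ when $HW(y)$ is odd. The output depends only on the Hamming weight, so the signature is symmetric and, up to the overall scalar $2$ absorbed by the convention $c[f_0, \ldots, f_k]$ introduced in Section \ref{preCSP}, has exactly the claimed form $[1, 0, 1, 0, \ldots]_k$.

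The two statements about $\widehat{\mathcal{EQ}}$ and $\widehat{\mathcal{EQ}_{\le D}}$ are then immediate consequences of applying the identity to every $k \ge 1$ or $1 \le k \le D$ respectively, since $H_2\mathcal{F}$ is defined entry-wise on each signature in $\mathcal{F}$. There is no real obstacle: the only point worth mentioning is that the tensor structure of the transformation (one independent copy of $H_2$ attached to each of the $k$ dangling edges) is precisely what makes the product in the display factor across coordinates, and the scalar $2$ that appears is standard to absorb and is already anticipated by the paper's scalar-prefactor notation.
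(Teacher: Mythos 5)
Your computation is correct, and since the paper states this lemma as a bare fact with no proof, your direct tensor-contraction expansion is exactly the standard verification one would write down. The only point of note is the global scalar $2$ (strictly $\widehat{=_k}=2[1,0,1,0,\dots]_k$), which you correctly identify and which is harmlessly absorbed by the paper's convention of working with signatures up to constant factors.
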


\subsection{Matchgate signatures}
 \begin{definition}
          A matchgate is a planar graph $G=(V,E)$ with weighted edges $w:E\to \mathbb{C}$, and together with some external nodes $U\subseteq V$ on its outer face labelled by $\{1,2,...,|U|\}$ in a clockwise order. The signature $f$ of a matchgate $G$ is a Boolean signature of arity $|U|$ and for each $\alpha\in \{0,1\}^{|U|}$,
          
          $$f(\alpha)=\#PM(G-X),$$
          where $X\subseteq U$ and a vertex in $U$ with label $i$ belongs to $X$ if and only if the $i$th bit of $\alpha$ is 1.
          \par A signature $f$ is a matchgate signature if it is the signature of some matchgate.
           $\mathscr{M}$ denotes all the matchgate signatures.
          \label{defMG}
      \end{definition}
      
      Some particularly useful properties of matchgates are as follows: 
      \begin{lemma}[\cite{cai2013matchgates}]
          A matchgate signature of arity $k$ can be realized by a matchgate with at most $O(k^4)$ vertices, which can be constructed in $O(k^4)$ time.
          \label{lemMG}
      \end{lemma}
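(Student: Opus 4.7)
The plan is to proceed constructively in three stages, exploiting the MGI characterization of $\mathscr{M}$. First, I would normalize $f$ so that $f(0^k) \neq 0$, handling the degenerate case by flipping variables at external nodes via local $O(1)$-size modifications that preserve the matchgate property. By MGI, the $2^k$ entries of $f$ satisfy Pl\"ucker-type relations, so up to this normalization $f$ is determined by its values on the $\binom{k}{2}$ two-bit inputs. These values assemble into a skew-symmetric ``character matrix'' $A \in \mathbb{C}^{k \times k}$ with $A_{ij} = f(e_i+e_j)/f(0^k)$, and every other entry of $f$ is then a Pfaffian of a principal submatrix of $A$.

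Second, I would explicitly build a planar matchgate realizing this data. Place the $k$ external nodes along the outer face in the prescribed clockwise order, and for each non-zero entry $A_{ij}$ introduce a constant-size ``connector'' sub-matchgate whose edge weight encodes $A_{ij}$. Ignoring planarity for a moment, one obtains a candidate matchgate with $O(k^2)$ vertices whose character matrix is exactly $A$; since any matchgate automatically satisfies MGI, the uniqueness direction of the MGI characterization then forces its full signature to coincide with $f$.

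The main obstacle is maintaining planarity while routing $\binom{k}{2} = O(k^2)$ connectors to the external nodes in a fixed cyclic order. In the worst case each connector can be forced to cross $O(k^2)$ others, and each crossing must be replaced by a constant-size planar crossover matchgate (of the type used to derive the FKT algorithm from non-planar Pfaffian computations). This gives $O(k^2)\cdot O(k^2)=O(k^4)$ vertices in total, and the construction can be carried out in time linear in the output size, establishing both the size and the time bounds. Correctness transfers from the non-planar construction to the planar one because crossover gadgets realize the identity on two variables, so the character matrix is preserved, and MGI again closes the argument on all remaining inputs.
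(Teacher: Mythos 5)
Your plan is essentially a reconstruction of the universal-matchgate argument from \cite{cai2013matchgates}, which the paper cites rather than proves (and invokes again to justify Lemma \ref{lem:MGvalue}): normalize so that $f(0^k)=1$ (Lemma \ref{lem:normalize} performs exactly your flipping step with $O(1)$-size $[0,1,0]$ attachments), note that the normalized signature is determined by its character matrix via the Pfaffian formula of Lemma \ref{lem:MGvalue}, realize that matrix by $\binom{k}{2}$ weighted chords between external nodes on the outer face, and planarize the $O(k^2)\cdot O(k^2)=O(k^4)$ chord crossings with constant-size gadgets. The size and time accounting are correct.

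There is, however, one genuine error in the planarization step. You invoke ``a constant-size planar crossover matchgate'' that ``realizes the identity on two variables.'' No such matchgate exists: with the four dangling edges in cyclic order $x_1x_2x_3x_4$ and wires $x_1\!-\!x_3$ and $x_2\!-\!x_4$, the identity-crossover signature has $F()=F(13)=F(24)=F(1234)=1$ and is $0$ elsewhere, whereas MGI (equivalently the $k=2$ case of Lemma \ref{lem:MGvalue}) forces $F(1234)=F(12)F(34)-F(13)F(24)+F(14)F(23)=-1$. Indeed, if a true planar crossover matchgate existed one could planarize arbitrary \#PM instances, contradicting \#P-hardness. What the cited construction actually uses is a \emph{signed} crossover matchgate, acting as two independent wires except for a factor $-1$ when both wires carry a $1$; these signs are not a nuisance to be eliminated but the mechanism of the proof, since for an input whose active chords form a pairing $M$ the accumulated sign is exactly $(-1)^{c(M)}$, reproducing the Pfaffian signs in Lemma \ref{lem:MGvalue}. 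Your closing appeal to MGI can then be salvaged: the construction is a bona fide matchgate, its character matrix equals $A$ because a weight-two input activates only one chord (so no crossing is ever doubly active and the signs are irrelevant there), and Lemma \ref{lem:MGvalue} forces the remaining values to agree with $f$. But as written, the step rests on a gadget that cannot exist.
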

          Furthermore, an algebraic representation of the matchgate signatures can be formulated.
\begin{theorem}[MGI]
    Suppose $f$ is a signature of arity $k$. Then $f$ is a matchgate signature if and only if the following identity, denoted as MGI, is satisfied:

    For each $\beta,\gamma\in\{0,1\}^k$, let $P=\{1\le j\le k| \beta_j\neq\gamma_j\}$, $l=|P|$. Let $p_j\in P$ be the $j$th smallest number in $P$ and let $e_{p_j}\in\{0,1\}^k$ denotes a string with a 1 in the $p_j$th index, and 0 elsewhere. Then
    
    $$\sum_{j=1}^n(-1)^jf(\beta\oplus e_{p_j})f(\gamma\oplus e_{p_j})=0$$
\end{theorem}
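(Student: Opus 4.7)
The plan is to prove the two directions separately: the necessary direction is a combinatorial cancellation proved via a sign-reversing involution on pairs of perfect matchings, whereas the sufficient direction requires reconstructing a planar matchgate from a signature that only satisfies an algebraic condition, which I would attack by induction on the arity.

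For the necessary direction, suppose $f$ is the signature of a matchgate $G=(V,E)$ with external node set $U$. Fix $\beta,\gamma\in\{0,1\}^k$, let $P=\{p_1<\dots<p_l\}$ be their disagreement set, and examine a generic term $f(\beta\oplus e_{p_j})f(\gamma\oplus e_{p_j})$. By the combinatorial definition, this counts pairs $(M,N)$ where $M$ is a perfect matching of $G$ with the external nodes selected by $\beta\oplus e_{p_j}$ removed, and $N$ is one for $\gamma\oplus e_{p_j}$. The symmetric difference $M\triangle N$ is a vertex-disjoint union of even cycles and simple paths, and its path endpoints lie exactly at the external nodes whose inclusion differs between $\beta\oplus e_{p_j}$ and $\gamma\oplus e_{p_j}$, i.e.\ among the nodes indexed by $P$. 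I would define the involution as follows: in $(M,N)$, identify the path $\pi$ containing the external node of smallest label in $P$, let $p_{j'}$ be its other endpoint, and flip $M,N$ along $\pi$ to obtain a new pair $(M',N')$ that contributes to the $j'$-th term. A careful sign-tracking argument (tying $(-1)^j$ to the position of $p_j$ in $P$) shows that these two contributions cancel in the signed sum, thereby establishing MGI.

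For the sufficient direction, I would induct on the arity $k$. The base cases $k\le 2$ are direct, since every such signature is realizable by a small explicit matchgate. For the inductive step, given $f$ of arity $k$ satisfying MGI, consider the sub-signatures $f^{x_k=0}$ and $f^{x_k=1}$ of arity $k-1$. By restricting the choice of $\beta,\gamma$ to pairs that agree on the last coordinate, I would verify that each sub-signature satisfies MGI and hence is realized by matchgates $G_0,G_1$ by induction. The remaining task is to glue $G_0$ and $G_1$ into a single planar matchgate whose signature is $f$; specializing MGI to $\beta,\gamma$ that disagree at the last coordinate yields precisely the compatibility conditions needed to perform this gluing consistently. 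The alternative route is Jerrum's proof as recorded in Cai--Chen 2017, which bypasses explicit construction by reducing to a canonical algebraic form and leveraging the parity constraint inherent in matchgate signatures; this is the approach the paper itself takes as inspiration.

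The main obstacle is undoubtedly the sufficient direction, and within it, the preservation of \emph{planarity}. The algebraic MGI determines the required \emph{values} of $f$, but matchgate signatures differ from general Pfaffian-realizable signatures precisely in requiring a planar embedding with the external nodes appearing in a fixed cyclic order on the outer face. Engineering the inductive gluing of $G_0$ and $G_1$ so that the combined gadget remains planar, with its external nodes in the correct cyclic order, is the delicate step, and is the reason why a purely algebraic reconstruction such as Jerrum's is noteworthy despite MGI being algebraic in appearance.
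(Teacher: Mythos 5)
You should first be aware that the paper does not prove this theorem: MGI is quoted as a known result (the citations to Cai--Choudhary--Lu and Cai--Gorenstein in the introduction), and the only part of its proof the paper actually re-uses is the consequence recorded as Lemma~\ref{lem:MGvalue}, which comes from the universal-matchgate construction of \cite{cai2013matchgates}. So your sketch can only be judged on its own terms. Your necessity direction is essentially Jerrum's argument (the one recorded in \cite[Section 4.3.1]{Cai_Chen_2017} and cited by the paper as inspiration): expand each product $f(\beta\oplus e_{p_j})f(\gamma\oplus e_{p_j})$ as a weighted sum over pairs of perfect matchings, decompose $M\triangle N$ into paths whose endpoints are the external nodes indexed by $P$, and cancel by flipping along a distinguished path. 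This is a viable plan, but the ``careful sign-tracking'' you defer is the entire content of the proof: it is precisely here that planarity enters, because the vertex-disjoint paths of $M\triangle N$ terminate on the outer face and therefore induce a \emph{non-crossing} pairing of $P$, and it is this non-crossing structure that reconciles the path-pairing with the alternating sign $(-1)^j$. As written, this direction is a correct outline with its crux omitted.

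The genuine gap is in the sufficiency direction. Realizing $f^{x_k=0}$ and $f^{x_k=1}$ by two \emph{separate} matchgates $G_0$ and $G_1$ gives you no mechanism for producing a single planar graph $G$ in which retaining or deleting the $k$th external node of the \emph{same} graph switches between the two sub-signatures; ``gluing'' two unrelated realizations is not an operation available on matchgates, and your assertion that MGI specialized to $\beta,\gamma$ disagreeing in the last coordinate ``yields precisely the compatibility conditions needed'' is exactly the step that has no construction behind it. The known proof proceeds differently: normalize $f$ so that $f(0\cdots0)=1$, build \emph{one} universal planar gadget whose edge weights are read off from the arity-two values $F(ab)$ of the normalization, and then use MGI to show that every higher value is forced to equal the Pfaffian-style sum over pairings $\sum_{M}(-1)^{c(M)}\prod_{b_ib_j\in M}F(b_ib_j)$ --- which is exactly the identity the paper states as Lemma~\ref{lem:MGvalue}. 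Without that idea (or an equivalent single-gadget construction with enough free parameters), your induction does not close, and the difficulty you correctly identify --- preserving planarity and the cyclic order of external nodes through the inductive step --- is not resolved but merely named.
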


\section{Dichotomies for \#CSP}\label{sec:preprocessing}

\subsection{Previous results for \#CSP}
We introduce two more sets of signatures related to the dichotomies for \#CSP.
\begin{definition}
        A signature $f$ with arity $k$ is of \textit{affine type} if it has the form:
        $$\lambda\chi_{AX=b}\cdot \mathfrak{i}^{\sum_{1\le i\le n}a_i{x_i}^2+2\sum_{1\le i< j\le n} b_{ij}x_ix_j}$$
        where $\mathfrak{i}$ denotes the imaginary unit, $a_i,b_{ij}\in\{0,1\},\lambda\in \mathbb{C}$, $AX=b$ is a system of linear equations on $\mathbb{Z}_2$ and
        \begin{equation}
            \chi_{AX=b}=\begin{cases}
                1, & \text{if } AX=b; \\
                0, & \text{otherwise}.\notag
            \end{cases}
        \end{equation}
        \par $\mathscr{A}$ denotes the set of all the signatures of affine type.
        \label{defA}
    \end{definition}
       \begin{definition}
        A signature $f$ is an $\mathcal{E}$\textit{-signature} if it has value 0 except for 2 complementary supports. A signature $f$ with arity $k$ is of \textit{product type} if it can be expressed as a tensor product of $\mathcal{E}$-signatures.
       
        \par $\mathscr{P}$ denotes the set of all the signatures of product type.
         \label{defP}
    \end{definition}

      The existing dichotomy for \#CSP, $\#R_D$-CSP and Pl-\#CSP can be stated as follows:  
\begin{theorem}[\cite{cai2014complexity}]
        If $\mathcal{F}\subseteq \mathscr{A}$ or $\mathcal{F}\subseteq \mathscr{P}$, $\text{\#CSP}(\mathcal{F})$ is computable in polynomial time; otherwise it is $\text{\#P}$-hard.
        \label{genCSP}
    \end{theorem}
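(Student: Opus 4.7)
The plan is to split into the tractability and hardness directions. For tractability under $\mathcal{F}\subseteq\mathscr{P}$, every signature in $\mathscr{P}$ decomposes as a tensor product of $\mathcal{E}$-signatures, each of which has only two complementary supports and therefore enforces a group of forced equalities/disequalities on its variables. On an instance, I would propagate these forced relations using a union-find structure, detect inconsistencies, and multiply the two complementary values per connected component to obtain $Z(I)$ in polynomial time. For tractability under $\mathcal{F}\subseteq\mathscr{A}$, each signature is supported on an affine subspace $\{X:AX=b\}$ of $\mathbb{Z}_2^n$ and weighted by a quadratic character $\ii^{Q(X)}$. The conjunction of constraints in an instance defines a single affine subspace, computable by Gaussian elimination over $\mathbb{Z}_2$; on that subspace, the value of the instance reduces to a Gauss sum of a $\mathbb{Z}_4$-valued quadratic form, and by diagonalizing $Q$ and summing the resulting geometric factors one evaluates it in polynomial time.

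For hardness, I would use Lemma \ref{lemcsp=hol} to rewrite the problem as $\text{Holant}(\mathcal{F}|\mathcal{EQ})$ and follow a three-step scheme. First, realize the pinning signatures $[1,0]$ and $[0,1]$, either by direct gadget constructions from the $=_k$ and a non-degenerate element of $\mathcal{F}$ or, when such gadgets are unavailable, by polynomial interpolation against a recurrence with sufficiently generic eigenvalues. Second, use pinning to reduce the arity: since both $\mathscr{A}$ and $\mathscr{P}$ are closed under the operation $f\mapsto f^\alpha$, if some $f\in\mathcal{F}$ lies outside $\mathscr{A}\cup\mathscr{P}$ then some subsignature $f^\alpha$ of small arity must also lie outside, so it suffices to derive \#P-hardness from a single low-arity bad signature together with $\mathcal{EQ}$. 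Third, in the base cases of arity at most four, apply the holographic transformation $H_2$ together with diagonal rescalings and carry out a case analysis to show that any such bad signature combined with $\mathcal{EQ}$ realizes a Holant problem already known to be \#P-hard (for example, a weighted counting of satisfying assignments or of independent sets).

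The main obstacle is the base-case analysis in step three. At small arity one must enumerate the normal forms of a signature $f\notin \mathscr{A}\cup\mathscr{P}$ under the available holographic transformations and exhibit, for each normal form, an explicit chain of gadget reductions to a canonical \#P-hard problem. The interpolation step additionally requires verifying that certain eigenvalue ratios arising from iterated gadget constructions are not roots of unity, a genericity condition which fails in subtle boundary cases and which must then be handled by ad hoc alternative constructions. This combinatorial bookkeeping, together with a careful treatment of the intersection $\mathscr{A}\cap\mathscr{P}$ and of degenerate signatures of the form $u^{\otimes k}$, constitutes the technical core of the original proof in \cite{cai2014complexity}, and my proposal would import that analysis rather than redo it.
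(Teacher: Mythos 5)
The paper does not prove this theorem at all: it is imported verbatim as a known result from \cite{cai2014complexity}, so there is no in-paper argument to compare yours against. Your outline accurately reflects the structure of the proof in that reference (the Gauss-sum algorithm for $\mathscr{A}$ and the product-decomposition/propagation algorithm for $\mathscr{P}$ on the tractable side; pinning, arity reduction via closure of $\mathscr{A}$ and $\mathscr{P}$ under $f\mapsto f^\alpha$, and a low-arity case analysis on the hardness side), and since you explicitly defer the technical core to that same citation, your treatment is in effect the same as the paper's.
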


\begin{theorem}[\cite{cai2014complexity}]
        Suppose $D\ge 3$ is an integer. If $\mathcal{F}\subseteq \mathscr{A}$ or $\mathcal{F}\subseteq \mathscr{P}$, $\#R_D\text{-CSP}(\mathcal{F})$ is computable in polynomial time; otherwise it is $\text{\#P}$-hard.
        \label{genRDCSP}
    \end{theorem}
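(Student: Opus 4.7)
The plan is to derive Theorem \ref{genRDCSP} from the unrestricted dichotomy (Theorem \ref{genCSP}) by proving $\text{\#CSP}(\mathcal{F}) \equiv_T \#R_D\text{-CSP}(\mathcal{F})$ for every integer $D \geq 3$. The tractability direction of this equivalence is free: any instance of $\#R_D\text{-CSP}(\mathcal{F})$ is in particular an instance of $\text{\#CSP}(\mathcal{F})$, so whenever $\mathcal{F} \subseteq \mathscr{A}$ or $\mathcal{F} \subseteq \mathscr{P}$ the polynomial-time algorithm supplied by Theorem \ref{genCSP} applies without modification.

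For the hardness direction I would assume $\mathcal{F} \not\subseteq \mathscr{A}$ and $\mathcal{F} \not\subseteq \mathscr{P}$, so that $\text{\#CSP}(\mathcal{F})$ is already \#P-hard by Theorem \ref{genCSP}. Passing to the Holant formulation via Lemma \ref{lemcsp=hol}, it suffices to establish $\text{Holant}(\mathcal{F}|\mathcal{EQ}) \leq_T \text{Holant}(\mathcal{F}|\mathcal{EQ}_{\leq D})$, and this in turn reduces to realizing $=_k$ for every $k > D$ as a right-side gadget built only from signatures in $\mathcal{EQ}_{\leq D}$. The hypothesis $D \geq 3$ enters here: $=_3$ is available and can be used to branch. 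Concretely, I would chain $k - 2$ copies of $=_3$ in a path, identifying one edge between each pair of consecutive copies; the two endpoints of the path each leave two dangling edges while every interior copy leaves one, for a total of $k$ dangling edges. A direct summation over the internal edges shows that the gadget evaluates to $1$ on the all-$0$ and all-$1$ external inputs and $0$ otherwise, so it realizes $=_k$. Substituting this gadget for every large-arity equality on the right side of a $\text{Holant}(\mathcal{F}|\mathcal{EQ})$ instance produces an equivalent $\text{Holant}(\mathcal{F}|\mathcal{EQ}_{\leq D})$ instance and completes the reduction.

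I expect the only non-routine step to be the verification that the chain of $=_3$'s really realizes $=_k$; this is a direct unwinding of the propagation of the equality constraint along the path, but one must also check that the gadget sits properly on the right side of the bipartite structure, which is automatic since every node of the gadget is itself an equality. The remainder of the argument is just the composition of Lemma \ref{lemcsp=hol}, Theorem \ref{genCSP}, and the gadget substitution described above. It is worth noting why the reduction breaks for $D = 2$: with only $=_1$ and $=_2$ on the right side, values can be propagated along paths but never branched, so no equality of arity greater than $2$ can be realized, and a substantially more delicate argument would be needed to treat that boundary case.
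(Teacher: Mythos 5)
There is a genuine gap in the hardness direction. Your high-level strategy (show $\text{\#CSP}(\mathcal{F})\le_T \#R_D\text{-CSP}(\mathcal{F})$ and invoke Theorem \ref{genCSP}) is the right one, and it is the strategy the paper follows for the planar analogue in Appendix \ref{sec:plrdcsp}. But the proposed gadget --- a path of $k-2$ copies of $=_3$ with consecutive copies sharing an edge --- is not a legal construction in the framework $\text{Holant}(\mathcal{F}\mid\mathcal{EQ}_{\le D})$. The underlying graph is bipartite: equality signatures sit on the right side and play the role of \emph{variables}, while signatures of $\mathcal{F}$ sit on the left side and play the role of \emph{constraints}. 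An edge joining two copies of $=_3$ would join two right-side vertices, which violates the bipartite structure; equivalently, in CSP language you are trying to merge two variables, and doing so requires a binary equality \emph{constraint}, i.e.\ $=_2$ available on the $\mathcal{F}$ side. Since $\mathcal{F}$ is arbitrary and need not contain (or realize for free) such a signature, the gadget does not exist as a right-side gadget, and your claim that it ``is automatic since every node of the gadget is itself an equality'' is exactly where the argument breaks.

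Supplying that missing $=_2$ on the constraint side is in fact the entire technical content of the bounded-degree dichotomy. The correct route, visible in the paper's proof of Theorem \ref{plRDCSP}, is: first note that $\mathcal{F}\not\subseteq\mathscr{P}$ yields a non-degenerate $f\in\mathcal{F}$ (degenerate signatures are tensor products of unaries, hence lie in $\mathscr{P}$); then use $f$ to realize a non-degenerate binary signature on the left side (Lemmas \ref{lem:plr3obtaind1} and \ref{lem:plr3obtainh}, corresponding to Lemmas 6.2 and 6.5 of \cite{cai2014complexity}); then use that binary signature to realize $=_2$ as a left-side constraint (Lemma \ref{lem:plr3obtain=2}, corresponding to Lemma 6.1 of \cite{cai2014complexity}). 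Only after $=_2$ is available as a constraint can one chain ``$=_k$ connected to $=_3$ via $=_2$'' to build all of $\mathcal{EQ}$ and conclude $\text{\#CSP}(\mathcal{F})\le_T \#R_D\text{-CSP}(\mathcal{F}\cup\{=_2\})\le_T \#R_D\text{-CSP}(\mathcal{F})$. Your closing remark about $D=2$ is correct in spirit, but the obstruction you identify (no branching) is not the one your proof actually trips over; the proof already fails at $D\ge 3$ for the reason above.
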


    \begin{theorem}[\cite{cai2017holographicuni}]
        If $\mathcal{F}\subseteq \mathscr{A}$ or $\mathcal{F}\subseteq \mathscr{P}$ or $\mathcal{F}\subseteq \widehat{\mathscr{M}}$, $\text{Pl-\#CSP}(\mathcal{F})$ is computable in polynomial time; otherwise it is $\text{\#P}$-hard.
        \label{plCSP}
    \end{theorem}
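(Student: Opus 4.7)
My plan is to separate the three tractability inclusions from the single hardness case. For $\mathcal{F} \subseteq \mathscr{A}$ and $\mathcal{F} \subseteq \mathscr{P}$, Theorem \ref{genCSP} already gives a polynomial-time algorithm for the unrestricted problem $\text{\#CSP}(\mathcal{F})$, so the planar restriction is trivially tractable. The genuinely new case is $\mathcal{F} \subseteq \widehat{\mathscr{M}}$. Here I would start from $\text{Pl-\#CSP}(\mathcal{F}) \equiv_T \text{Pl-Holant}(\mathcal{F}|\mathcal{EQ})$ (Lemma \ref{lemcsp=hol}), apply the holographic transformation by $H_2$ (Theorem \ref{thmHT}) to obtain $\text{Pl-Holant}(\widehat{\mathcal{F}}|\widehat{\mathcal{EQ}})$, and then note that by hypothesis $\widehat{\mathcal{F}} \subseteq \mathscr{M}$ while Lemma \ref{transEQ} identifies every signature in $\widehat{\mathcal{EQ}}$ as a $[1,0,1,0,\dots]_k$ supported only on even Hamming weights, each of which is easily realized as a matchgate signature by a direct construction. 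The resulting planar Holant problem is thus entirely over $\mathscr{M}$, and Lemma \ref{lemMG} together with the FKT algorithm gives a polynomial-time evaluation.

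For the hardness direction, I would assume that all three tractability inclusions fail and again pass to $\text{Pl-Holant}(\widehat{\mathcal{F}}|\widehat{\mathcal{EQ}})$ via the $H_2$ transformation. By Theorem \ref{genCSP}, the unrestricted $\text{\#CSP}(\mathcal{F})$ is already \#P-hard, so the burden is to plant a hardness witness into a reduction that respects the fixed cyclic variable ordering required by Pl-\#CSP. Fix some $f \in \mathcal{F}$ with $\widehat{f} \notin \mathscr{M}$; the plan is then a two-stage attack. First, I would use the auxiliary set $\widehat{\mathcal{EQ}}$, which contains the pinning signature $[1,0]$, the binary $[1,0,1]$, and all even-parity equalities, as planar building blocks to construct symmetric gadgets from $\widehat{f}$ in the star-like fashion formalized later in the present paper as Theorem \ref{lem:SymGadRealize}. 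Second, once such a symmetric non-matchgate signature has been realized planarly, I would invoke the known dichotomy for planar symmetric Holant problems to conclude \#P-hardness of the transformed problem, and therefore of $\text{Pl-\#CSP}(\mathcal{F})$ itself.

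The main obstacle is the interplay between planarity and the failure of MGI for $\widehat{f}$. Standard non-planar symmetrization tricks such as arbitrary variable permutations and crossing wires are unavailable, because Pl-\#CSP demands a fixed cyclic ordering of each signature's arguments (Remark \ref{remembd}); merely producing a planar incidence graph, as in $\text{\#CSP}(\mathcal{F})\langle\mathcal{PL}\rangle$, is insufficient. Extracting from the failure of MGI a witness that survives a planarity-preserving gadget construction is precisely the difficulty motivating the present paper's study of permutable matchgate signatures (Results 2--4), and I would expect to lean on a characterization of the type of Theorem \ref{lem:AsymtoSym} to convert asymmetric non-matchgate behaviour into a symmetric non-matchgate witness. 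Carefully book-keeping the cyclic orderings throughout each step of the reduction, rather than merely tracking planarity of the underlying graph, is what I expect to be the most delicate part of the argument.
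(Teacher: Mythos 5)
This statement is not proved in the paper at all: it is imported verbatim as a known result from \cite{cai2017holographicuni} (the Cai--Fu planar \#CSP dichotomy), so there is no in-paper proof to compare against. Judged on its own terms, your tractability direction is fine and is the standard argument: $\mathscr{A}$ and $\mathscr{P}$ are handled by Theorem \ref{genCSP} without reference to planarity, and for $\mathcal{F}\subseteq\widehat{\mathscr{M}}$ the transformation to $\text{Pl-Holant}(\widehat{\mathcal{F}}\mid\widehat{\mathcal{EQ}})$ lands entirely inside $\mathscr{M}$ (each $[1,0,1,0,\dots]_k$ is a matchgate signature), whereupon Lemma \ref{lemMG} and FKT finish.

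The hardness direction, however, has a genuine gap. You propose to take $f\in\mathcal{F}$ with $\widehat{f}\notin\mathscr{M}$ and process it with the star-gadget machinery of Theorems \ref{lem:SymGadRealize} and \ref{lem:AsymtoSym}. Those theorems apply only to \emph{permutable matchgate} signatures: their proofs rest on Lemma \ref{lem:MGvalue} and Corollary \ref{coro:4MPequal}, which presuppose that the signature satisfies MGI and hence admits the pairing expansion $F(b_1\dots b_{2k})=\sum_M(-1)^{c(M)}\prod F(b_ib_j)$. A signature with $\widehat{f}\notin\mathscr{M}$ satisfies no such expansion, so no star gadget exists for it and the cited theorems say nothing about it; they are the wrong tool for a non-matchgate witness. (Indeed, in this paper the logical flow runs the other way: Theorem \ref{plCSP} is taken as given and used to derive Theorem \ref{thmCSPpl}, with $\mathscr{M}_P$ entering only on the tractable side.) There is also a structural issue: the hypothesis for hardness is that the three inclusions $\mathcal{F}\subseteq\mathscr{A}$, $\mathcal{F}\subseteq\mathscr{P}$, $\mathcal{F}\subseteq\widehat{\mathscr{M}}$ all fail, which does not give you a single $f$ violating all three simultaneously; the witnesses may be distinct signatures whose interaction must be orchestrated, and extracting a planar-realizable symmetric hardness witness from an arbitrary asymmetric non-matchgate signature under the fixed cyclic-ordering constraint is precisely the bulk of the (very long) argument in \cite{cai2017holographicuni}. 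Your sketch correctly identifies where the difficulty lives but does not supply the idea that overcomes it.
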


\subsection{Dichotomy for Pl-\#$R_D$-CSP}\label{sec:easyplrdcsp}

The dichotomy for Pl-$\#R_D$-CSP is stated as follows.

\begin{theorem} \label{plRDCSP}
    Suppose $D\ge3$ is an integer. If $\mathcal{F}\subseteq \mathscr{A}$ or $\mathcal{F}\subseteq \mathscr{P}$ or $\mathcal{F}\subseteq \widehat{\mathscr{M}}$, Pl-$\#R_D$-CSP$(\mathcal{F})$ is computable in polynomial time; otherwise it is $\text{\#P}$-hard.
\end{theorem}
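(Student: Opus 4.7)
The tractability half of Theorem~\ref{plRDCSP} requires no new work: Pl-$\#R_D$-CSP$(\mathcal{F})$ is a restriction of Pl-\#CSP$(\mathcal{F})$, since every instance additionally satisfies the bound that each variable occurs at most $D$ times. Consequently, the polynomial-time algorithms guaranteed by Theorem~\ref{plCSP} in the three cases $\mathcal{F}\subseteq \mathscr{A}$, $\mathcal{F}\subseteq \mathscr{P}$, and $\mathcal{F}\subseteq \widehat{\mathscr{M}}$ apply verbatim to Pl-$\#R_D$-CSP$(\mathcal{F})$.

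For the hardness half, assume $\mathcal{F}$ lies in none of the three tractable classes, so that Theorem~\ref{plCSP} yields the \#P-hardness of Pl-\#CSP$(\mathcal{F})$. It then suffices to construct a polynomial-time Turing reduction Pl-\#CSP$(\mathcal{F})\le_T$ Pl-$\#R_D$-CSP$(\mathcal{F})$ for $D\ge 3$. Given a Pl-\#CSP$(\mathcal{F})$ instance with a planar bipartite underlying graph and a fixed counterclockwise embedding, the only obstruction to lying in Pl-$\#R_D$-CSP$(\mathcal{F})$ is a variable that appears more than $D$ times. For each such variable $v$, the plan is to replace it by a planar gadget built from lower-arity equality constraints, attached back to the same $\mathcal{F}$-vertices as before. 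In Holant terms, this amounts to realizing $=_k$ with $k>D$ by a planar gadget over $\mathcal{EQ}_{\le D}$; a tree of $=_3$ nodes with $k$ leaves computes $=_k$, and since $D\ge 3$ the tree uses only signatures in $\mathcal{EQ}_{\le D}$. The gadget is embedded inside a small disk around the removed variable vertex, leaving the planar structure of the remainder of the instance unchanged. This mirrors the strategy of Cai and Chen in the proof of Theorem~\ref{genRDCSP} \cite{cai2014complexity}, now with an extra planarity check.

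The main obstacle is the planar embedding constraint of Remark~\ref{remembd}: the counterclockwise cyclic order of the edges incident to each $\mathcal{F}$-vertex must survive the replacement unchanged. This is arranged by performing the replacement strictly locally, so that no adjustment occurs at any $\mathcal{F}$-vertex; the symmetry of each $=_3$ means that the orientation around internal tree nodes imposes no constraint, and the $k$ leaves of the tree can be routed along the boundary of the disk to attach in any prescribed cyclic order. A secondary technicality is that internal edges of the tree join two right-side equality vertices, breaking the bipartite structure of $\text{Pl-Holant}(\mathcal{F}|\mathcal{EQ}_{\le D})$; this is handled either by inserting $=_2$ surrogates on the $\mathcal{F}$-side via a planarly-routable subgadget, or by working directly in the CSP form, where a single variable of high degree is split into several lower-degree copies chained through $=_3$ sharing patterns. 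Once the degree-reduction gadget is verified to respect the planar embedding, the hardness of Pl-$\#R_D$-CSP$(\mathcal{F})$ follows from that of Pl-\#CSP$(\mathcal{F})$ and the dichotomy is established.
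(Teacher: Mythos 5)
Your overall skeleton matches the paper's: tractability is inherited from Theorem~\ref{plCSP}, and hardness is obtained by reducing Pl-\#CSP$(\mathcal{F})$ to Pl-$\#R_D$-CSP$(\mathcal{F})$ by simulating a high-degree variable with a chain/tree of equalities of arity at most $3$. However, there is a genuine gap at exactly the point you dismiss as a ``secondary technicality.'' In the bipartite formulation $\text{Pl-Holant}(\mathcal{F}\mid\mathcal{EQ}_{\le D})$ (equivalently, in the CSP form where variables are the right-side vertices), two equality vertices can never be adjacent: every edge must join an equality vertex to a constraint vertex carrying a signature from $\mathcal{F}$. So your tree of $=_3$ nodes cannot be wired up unless a binary equality (or something realizing it) is available \emph{on the constraint side}, i.e., unless $=_2$ can be realized from $\mathcal{F}$ itself. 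You acknowledge this by invoking ``$=_2$ surrogates on the $\mathcal{F}$-side via a planarly-routable subgadget,'' but you give no construction, and this is not a routine step --- it is where essentially all of the technical work in the paper's proof lives.

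Concretely, the paper first establishes $\text{Pl-\#CSP}(\mathcal{F})\le_T \text{Pl-}R_D\text{-\#CSP}(\mathcal{F}\cup\{=_2\})$ exactly as you describe, and then spends three lemmas removing the added $=_2$: since $\mathcal{F}$ lies in none of $\mathscr{A}$, $\mathscr{P}$, $\widehat{\mathscr{M}}$, it contains a non-degenerate signature $f$; after the holographic transformation by $H_2$, Lemmas~\ref{lem:plr3obtaind1} and~\ref{lem:plr3obtainh} extract from $\widehat{f}$ a non-degenerate \emph{binary} signature (or $[0,0,1]$) by planar ``generalized mating'' gadgets, and Lemma~\ref{lem:plr3obtain=2} then realizes $=_2$ from that non-degenerate binary signature. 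All of these constructions must be checked to be planar and to respect the prescribed cyclic order at each constraint vertex, which is why the paper cannot simply cite \cite{cai2014complexity} verbatim. Your proposal as written assumes the conclusion of these lemmas without proving it, so the hardness direction is incomplete.
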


We delay the full proof of Theorem \ref{plRDCSP} to Appendix \ref{sec:plrdcsp}, as it is similar to that of Theorem \ref{genRDCSP} originally presented in in \cite[Section 6]{cai2014complexity}. Here, we only remark the major differences between the two proofs.
\begin{itemize}
    \item All the gadgets in our proof are constructed in the setting of Pl-$\#R_D$-CSP instead of $\#R_D$-CSP;
    \item Unlike \cite[Lemma 6.5]{cai2014complexity}, we realize $[0,0,1]=[0,1]^{\otimes 2}$ instead of $[0,1]^{\otimes m}$ in Lemma  \ref{lem:plr3obtaind1};
    \item Unlike \cite[Lemma 6.2]{cai2014complexity}, instead of realizing a single non-degenerate binary signature $h$, we realize $h^{\otimes 2}$ and use them to form a non-degenerate binary signature $g=h^Th$ in Lemma \ref{lem:plr3obtainh}.
\end{itemize}

\section{Matchgate signatures under variable permutations}\label{secPMG}

In Section \ref{sec:MGnormal}, we introduce the concept of the normalized matchgate signature, which simplifies the form of matchgate signatures. 
In Section \ref{sec:PermCheck}, we prove that in polynomial time we may check whether a matchgate signature under a given permutation remains a matchgate signature. 
In Section \ref{sec:easyMP}, we introduce the concept of permutable matchgate signatures, and show that how this concept erase the gap between Pl-\#CSP and \#CSP$\langle\mathcal{PL}\rangle$. 
In Section \ref{sec:MpSymGad}, we characterize the permutable matchgate signatures by constructing a corresponding gadget consisting of symmetric matchgate signatures.
In Section \ref{sec:Redctosym}, for each asymmetric case of $f\in\mathcal{F}\cap(\mathscr{M}-\mathscr{A})$, we reduce a corresponding symmetric case to it by gadget construction in the setting of $\#R_3$-CSP.

\subsection{Normalize the matchgate}\label{sec:MGnormal}

A matchgate signature is said to be \textit{non-trivial} if it does not remain constant at 0. We say $f$ is a \textit{normalized signature} if $f(0\dots0)=1$. For a normalized matchgate signature $f$ of arity $n$ and distinct $1\le b_1,\dots,b_k\le n, 0\le k\le n$, we define $F(b_1\dots b_k)=f(\alpha)$ where $\alpha_{b_1}=\dots=\alpha_{b_k}=1$ and $\alpha_i=0$ for each $1\le i\le n, i\neq b_1,\dots,b_k$. For example, if the arity of $f$ is 4, then $F(24)=f(0101)$ while $F()=f(0000)=1$. We denote $F$ as the \textit{index expression} of $f$, and we also say $F$ is a normalized matchgate signature without causing ambiguity. 

This section presents the relationship between non-trivial matchgate signatures and normalized matchgate signatures, together with a property that normalized matchgate signatures have. The results in this section can be seen as a partial restatement of the results in \cite{cai2013matchgates}.

\begin{lemma}
    Each non-trivial matchgate signature $g$ of arity $n$ can be realized by a normalized matchgate signature $f$ of arity $n$ and $O(n)$ $[0,1,0]$ signatures, up to a constant factor.
    \label{lem:normalize}
\end{lemma}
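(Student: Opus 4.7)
The key observation is that the binary signature $[0,1,0]$ acts as a bit-flipper: for any signature $h$ and any variable $i$,
$$\sum_{c \in \{0,1\}} h(\dots, c, \dots)\cdot [0,1,0](c, y) = h(\dots, 1-y, \dots).$$
Since $[0,1,0]$ is itself a matchgate signature (realized by the path of three vertices on two edges, with the two endpoints as external nodes), this identity is what will let me shift any chosen non-zero input of $g$ to the all-zero input while staying inside the matchgate world.

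Concretely, since $g$ is non-trivial, I fix $\alpha \in \{0,1\}^n$ with $g(\alpha) \ne 0$ and set $f(\beta) := g(\beta \oplus \alpha)/g(\alpha)$, so $f(0^n) = 1$ and $f$ is normalized. Attaching one copy of $[0,1,0]$ to variable $i$ of $f$ for each $i$ with $\alpha_i = 1$ uses at most $n$ copies, i.e.\ $O(n)$, and by the bit-flipping identity the resulting signature on the $n$ new dangling edges is $\beta \mapsto f(\beta \oplus \alpha) = g(\beta)/g(\alpha)$, realizing $g$ up to the constant factor $1/g(\alpha)$.

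It remains to confirm that $f$ is genuinely a matchgate signature, so that the realization above is well-typed. Starting from any matchgate $M$ realizing $g$ (Definition \ref{defMG}), for each $i$ with $\alpha_i = 1$ I splice a 3-vertex path $p_i q_i r_i$ onto the external node $u_i$ by adding the edge $u_i p_i$ and relabelling $r_i$ as the new external node at position $i$. Because each such splice is carried out on the outer face, planarity and the clockwise order of external nodes are preserved, and a perfect-matching count mirroring the bit-flipping identity above shows that the new matchgate realizes $g(\beta \oplus \alpha)$. The global scalar $1/g(\alpha)$ can either be absorbed into the ``up to a constant factor'' clause, or realized explicitly by appending a disjoint edge of weight $1/g(\alpha)$ between two fresh internal vertices (which is forced into every perfect matching and therefore multiplies the signature uniformly). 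The step that needs the most care is this local surgery on the outer face; everything else is bookkeeping around Definition \ref{defMG} and the bit-flipping identity.
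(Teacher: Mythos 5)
Your proposal is correct and follows essentially the same route as the paper: pick a point of non-zero support, define $f$ by XOR-shifting $g$ there and dividing by $g$'s value, then attach a $[0,1,0]$ to each flipped variable to recover $g$ up to the factor $1/g(\alpha)$. The only difference is that the paper justifies $f\in\mathscr{M}$ by noting that $f$ still satisfies MGI, whereas you do it by explicit outer-face surgery on a matchgate for $g$; both are valid, and your perfect-matching count for the spliced path is the right check.
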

\begin{proof}
    Since $g$ is non-trivial, there exists $\beta\in \{0,1\}^n$ satisfying $g(\beta)\neq 0$. For each $\alpha\in \{0,1\}^n$, we let $f(\alpha)=g(\alpha\oplus\beta)/g(\beta)$. It can be verified that $f(0\dots0)=1$ and $f$ also satisfy MGI, which implies that $f$ is a normalized matchgate signature. Then we can use the following gadget to realize $\frac{1}{g(\beta)}g$: for each $1\le i\le n$ satisfying $\beta_i=1$, we connect a $[0,1,0]$ signature to the $i$th variable of $f$.
\end{proof}

We denote $f$ as the \textit{normalization} of $g$ in the above lemma. Suppose $k$ is an integer, $b_1,\dots,b_{2k}\in \mathbb{N}^+$ and $b_1<\dots<b_{2k}$. $S=\{b_1,\dots,b_{2k}\}$ is said to be an \textit{index set of size $2k$}. A \textit{pairing} $M$ of $S$ is a partition of $S$ whose components contain exactly 2 elements. In other words, $M$ can be seen as a perfect matching on the graph $(S,S\times S)$. In addition, suppose $a,b,c,d\in S$ and $a<b<c<d$. If $M$ is a pairing of $S$ and $ac,bd\in M$, then $(ac,bd)$ is said to be a \textit{crossing} in $M$. We use $c(M)$ to denote the number of crossings in $M$.

We also give a visualization of the definitions above. We draw all elements in $S$ on a circle in a sequential order. Given a pairing $M$, we draw a straight line between the two elements in each pair belonging to $M$. A crossing in $M$ is formed if and only if two of the straight lines form a crossing. See Figure \ref{fig:pairing} for an example.
\begin{figure}
            \centering
            \includegraphics[height=0.2\textheight]{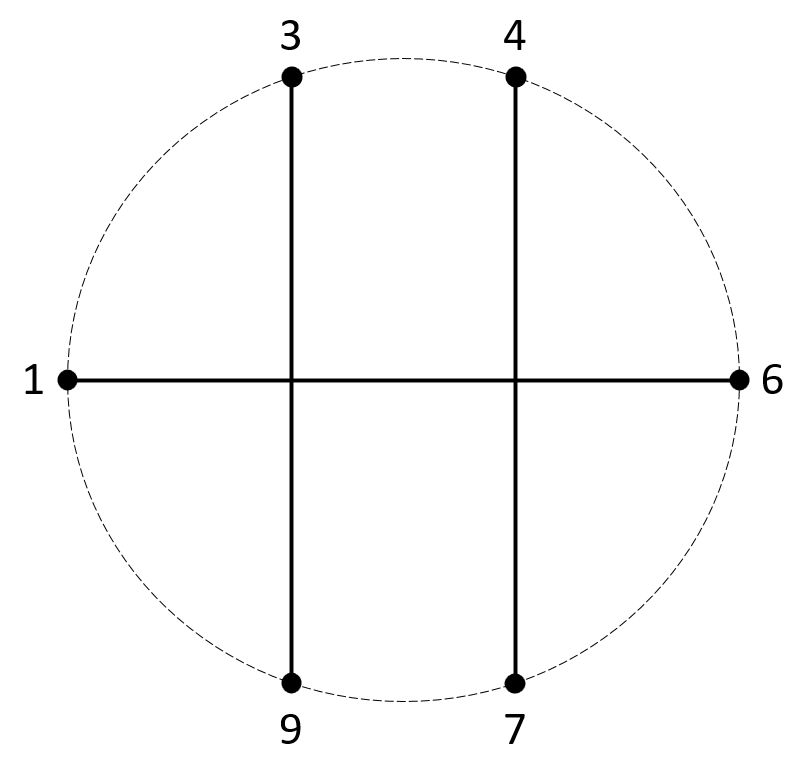}
            \caption{A visualization of the pairing $M=\{(1,6),(3,9),(4,7)\}$.} 
            \label{fig:pairing}
        \end{figure}
By the construction of the universal matchgate in \cite{cai2013matchgates}, we have the following lemma.

\begin{lemma}
    Suppose $F$ is a normalized signature of arity $n$ and of even parity. Then $F$ is a matchgate signature if and only if for each integer $0\le k\le n/2$ and distinct $1\le b_1,\dots,b_{2k}\le n$, 
    
    $$F(b_1\dots b_{2k})=\sum_{M:M\text{ is a pairing of }\{b_1,\dots ,b_{2k}\}}{(-1)^{c(M)}\prod_{b_ib_j\in M}F(b_ib_j)}$$
    \label{lem:MGvalue}
\end{lemma}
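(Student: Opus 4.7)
The plan is to establish both implications by induction on $k$, with the Pfaffian recursive expansion as the common backbone.

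For necessity (a normalized even-parity matchgate signature $F$ satisfies the stated formula), I would induct on $k$. The cases $k=0$ and $k=1$ are immediate, since $F()=1$ matches the empty Pfaffian and $F(b_1 b_2)$ equals the unique one-pair sum $(-1)^0 F(b_1 b_2)$. For the inductive step $k\ge 2$, I would instantiate MGI with $\beta=e_{b_1}$ and $\gamma=e_{b_2}+e_{b_3}+\dots+e_{b_{2k}}$; both strings have odd Hamming weight and their differing set is exactly $P=\{b_1,\dots,b_{2k}\}$, so $p_j=b_j$. The $j=1$ term contributes $(-1)\cdot F()\cdot F(b_1 b_2\dots b_{2k})=-F(b_1\dots b_{2k})$, while for $j\ge 2$ the contribution is $(-1)^j F(b_1 b_j)F(b_2\dots\widehat{b_j}\dots b_{2k})$; every other piece vanishes because $F$ has even parity. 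Rearranging MGI then gives the Pfaffian-type recursion
$$F(b_1\dots b_{2k})=\sum_{j=2}^{2k}(-1)^j F(b_1 b_j)F(b_2\dots\widehat{b_j}\dots b_{2k}).$$
Applying the inductive hypothesis to each $F(b_2\dots\widehat{b_j}\dots b_{2k})$ expresses the right-hand side as a sum over pairings of $\{b_2,\dots,b_{2k}\}\setminus\{b_j\}$, and combining with the standard identity that if $M$ is a pairing of $\{b_1,\dots,b_{2k}\}$ containing the pair $(b_1,b_j)$ and $M'=M\setminus\{(b_1,b_j)\}$, then $c(M)=c(M')+s$ with $s\equiv j\pmod 2$ (here $s$ is the number of elements of $\{b_2,\dots,b_{j-1}\}$ paired in $M'$ to an element of $\{b_{j+1},\dots,b_{2k}\}$, and the remaining $j-2-s$ elements pair within $\{b_2,\dots,b_{j-1}\}$, forcing this parity), the double sum collapses into the single Pfaffian sum indexed by pairings of $\{b_1,\dots,b_{2k}\}$.

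For sufficiency, I would appeal to the universal matchgate construction of \cite{cai2013matchgates}: given the prescribed weight-$2$ data $\{F(b_i b_j)\}_{1\le i<j\le n}$ (equivalently an antisymmetric matrix), that construction produces a planar matchgate whose signature $G$ is normalized, has even parity, and agrees with $F$ on every weight-$2$ input. By the necessity direction just proved, $G$ satisfies the Pfaffian formula; but that formula on a weight-$2k$ input depends only on weight-$2$ values, so $G$ and $F$ produce identical sums at every index set, hence $G=F$ identically, giving $F\in\mathscr{M}$.

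The main obstacle I anticipate is the sign bookkeeping in the necessity step—making the MGI coefficient $(-1)^j$ line up exactly with the change in crossing parity when the pair $(b_1,b_j)$ is inserted into a pairing of the remaining indices. This is a standard but fiddly Pfaffian expansion identity; the cleanest way through is to verify that $j-2-s$ is even by the counting above, so that $(-1)^{c(M)}=(-1)^j(-1)^{c(M')}$, after which the inductive reassembly goes through without further incident.
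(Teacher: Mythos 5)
Your proposal is correct. Note, however, that the paper does not actually prove this lemma at all: it is stated with the single remark that it follows ``by the construction of the universal matchgate in \cite{cai2013matchgates}'', i.e.\ both directions are delegated to the fact that the universal matchgate's signature is given by Pfaffians of an arbitrary antisymmetric matrix of weight-$2$ values. Your sufficiency argument is exactly that citation, but your necessity argument takes a genuinely more self-contained route: you derive the Pfaffian recursion $F(b_1\dots b_{2k})=\sum_{j\ge 2}(-1)^jF(b_1b_j)F(b_2\dots\widehat{b_j}\dots b_{2k})$ directly from MGI with $\beta=e_{b_1}$, $\gamma=e_{b_2}\oplus\dots\oplus e_{b_{2k}}$, and then reassemble the double sum into the single sum over pairings. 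The sign bookkeeping you flag as the main risk is in fact handled correctly: the number of pairs of $M'$ crossing $(b_1,b_j)$ equals the number $s$ of elements of $\{b_2,\dots,b_{j-1}\}$ matched outside that interval, and $j-2-s$ even forces $s\equiv j\pmod 2$, so $(-1)^{c(M)}=(-1)^j(-1)^{c(M')}$ as needed. What your route buys is a proof of the harder direction from MGI alone (which the paper already states as the characterization of $\mathscr{M}$), at the cost of the inductive expansion; what the paper's citation buys is brevity. One cosmetic point: in your necessity step the claim that ``every other piece vanishes because $F$ has even parity'' is vacuous for the chosen $\beta,\gamma$ --- all $2k$ MGI terms are already accounted for --- and the even-parity hypothesis is really only needed in the sufficiency direction, to conclude that $F$ agrees with the constructed matchgate signature $G$ on odd-weight inputs as well.
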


\subsection{Permutation Check}\label{sec:PermCheck}
In this section, we show that given a normalized matchgate signature $F$ of arity $n$ and a permutation $\pi$, we can decide whether $F_{\pi}$ is a matchgate signature in polynomial time in $n$. To be precise, we only need to check whether $F_{\pi}$ satisfy all the properties in Lemma \ref{lem:MGvalue} restricting to $k=2$.
\begin{theorem}\label{lem:4andpiMG}
    Suppose $F$ is a normalized matchgate signature and $\pi$ is a permutation.
    If for each $1\le a<b<c<d\le n$, $F_{\pi}(abcd)=F_{\pi}(ab)F_{\pi}(cd)-F_{\pi}(ac)F_{\pi}(bd)+F_{\pi}(ad)F_{\pi}(bc)$, then $F_{\pi}$ is also a matchgate signature. 
\end{theorem}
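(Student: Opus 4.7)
The plan is as follows. By Lemma~\ref{lem:MGvalue}, since $F_\pi$ remains normalized (the permutation $\pi$ fixes the all-zero input) and of even parity (the permutation $\pi$ preserves Hamming weight), it suffices to verify, for every integer $0 \le k \le n/2$ and every $1 \le b_1 < \dots < b_{2k} \le n$, the identity
\[
F_\pi(b_1 \dots b_{2k}) = \sum_{M} (-1)^{c(M)} \prod_{b_ib_j \in M} F_\pi(b_ib_j),
\]
where $M$ ranges over pairings of $\{b_1,\dots,b_{2k}\}$. I proceed by strong induction on $k$: the cases $k=0$ and $k=1$ are trivial, and $k=2$ is precisely the hypothesis.

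For the inductive step with $k \ge 3$, I aim to establish the Pfaffian-style cofactor expansion
\[
F_\pi(b_1 \dots b_{2k}) = \sum_{j=2}^{2k} (-1)^j F_\pi(b_1 b_j)\, F_\pi(b_2\dots \hat{b_j}\dots b_{2k}),
\]
since, combined with the inductive hypothesis applied to each $(2k-2)$-variable factor, this expansion reassembles into the full pairing identity. To derive the cofactor expansion, I apply the MGI of $F$ (which holds because $F$ is a matchgate) with $\beta$ the indicator of $\{\pi(b_1)\}$ and $\gamma$ the indicator of $\{\pi(b_2),\dots,\pi(b_{2k})\}$. Using $F_\pi(S)=F(\pi(S))$, the resulting MGI becomes an alternating-sign expression for $F_\pi(b_1 \dots b_{2k})$ as a combination of terms $F_\pi(b_1 b_j)\, F_\pi(b_2\dots \hat{b_j}\dots b_{2k})$, but with each sign $(-1)^{s_j}$ determined by the rank of $\pi(b_j)$ inside $\{\pi(b_1),\dots,\pi(b_{2k})\}$ rather than by $j$.

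The core of the argument, and the main obstacle, is to reconcile the signs $(-1)^{s_j}$ with the target signs $(-1)^j$. Each discrepancy $s_j \not\equiv j \pmod 2$ traces back to an inversion that $\pi$ creates among the indices $b_1,\dots,b_{2k}$. Following Jerrum's grouping strategy recorded in \cite[Section 4.3.1]{Cai_Chen_2017}, my plan is to partition the MGI sum into blocks indexed by these inversions; within each block, applying the 4-variable hypothesis to an appropriately chosen quadruple $\{b_1, b_i, b_j, b_l\}$ together with the inductive hypothesis on the remaining $(2k-2)$-variable factor should force the block's contribution to the discrepancy to vanish. The delicate part is the combinatorial bookkeeping, since a single inversion of $\pi$ produces cascading sign flips across many terms at higher arities; the non-trivial induction mentioned in the introduction amounts precisely to showing that these cascades telescope to zero when the MGI summation is divided into the right parts.
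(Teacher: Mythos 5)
Your setup is sound: reducing to the pairing identity of Lemma~\ref{lem:MGvalue}, inducting on $k$ with $k=2$ as the hypothesis, and recognizing that the MGI of $F$ (with $\beta,\gamma$ as you choose them) yields a cofactor expansion of $F_\pi(b_1\dots b_{2k})$ whose signs are governed by the ranks $s_j$ of $\pi(b_j)$ rather than by $j$. But the proof stops exactly where the theorem's content begins. Subtracting your target expansion from the MGI-derived one, what must be shown is that $\sum_{j\,:\,s_j\not\equiv j\ (\mathrm{mod}\ 2)}(-1)^{s_j}F_\pi(b_1b_j)F_\pi(b_2\dots\hat{b_j}\dots b_{2k})=0$, and your proposal to obtain this by ``partitioning into blocks indexed by inversions'' and applying the quadruple hypothesis inside each block is not carried out and does not obviously work: the terms for different mutual $j$ involve \emph{different} $(2k-2)$-ary factors (each omits a different $b_j$), so there is no common factor against which a single 4-index identity can be applied locally; to create one you must expand every $(2k-2)$-ary value into its pairing sum and regroup globally, which is no longer a block-by-block telescoping. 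Declaring that ``the non-trivial induction amounts precisely to showing that these cascades telescope to zero'' names the gap rather than closing it.

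For comparison, the paper never forms a cofactor expansion. It writes both sides as full pairing sums (over pairings of the $b$'s with $b$-crossings versus pairings of the $a$'s with $a$-crossings), classifies indices as stable or mutual according to whether $i$ and its rank $\tau(i)$ have the same parity, and partitions pairings by their number $g$ of stable--stable pairs, with partial sums $m_g$ and $m_g'$. The engine of the cancellation is Lemma~\ref{lem:partcross} (the crossing parity $c(S_1,S_2)$ depends only on the index partition), which gives $\Omega(S_1)=\Omega(\pi(S_1))$ for every even-size set $S_1$ of stable points via the inductive hypothesis on both factors; summing over all $S_1$ of size $2t$ counts each pairing in $\mathcal{M}_g$ exactly $\binom{g}{t}$ times, producing an upper-triangular linear system that forces $m_g=m_g'$ for $g\ge1$, with a separate argument (mixing two stable and two mutual points) needed for $m_0$. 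If you want to complete your route, you will need a quantitative replacement for this machinery; as written, the proposal has a genuine gap.
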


 Before proving the above lemma, we introduce the concept of the partition of a pairing $M$. Given a set $M$, a \textit{partition} of $M$ are two subsets $M_1,M_2\subseteq M$ satisfying $M=M_1\cup M_2,M_1\cap M_2=\emptyset$. If $M$ is a pairing, we also use $c(M_1,M_2)$ to denote the number of crossings formed by a pair in $M_1$ and another one in $M_2$. Further, if $M_1=\{ab\}$, we also denote $c(M_1,M_2)$ as $c(ab,M_2)$. Obviously,
    $c(M)=c(M_1)+c(M_2)+c(M_1,M_2)$.
    Besides, the following lemma shows that for any pairing $M$, $c(M_1,M_2) \mod 2$ only depends on the partition of the index, irrespective of the specific form of the pairs within $M_1$ or $M_2$. 
    \begin{lemma}
       Let $S$ be an index set and $S_1,S_2$ form a partition of $S$. Then for any pairing $M_1$ of $S_1$ and $M_2$ of $S_2$, $c(M_1,M_2)\mod 2$ is independent of $M_1$ and $M_2$ and only depends on $S_1$ and $S_2$.
       \label{lem:partcross}
    \end{lemma}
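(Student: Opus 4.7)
The plan is to isolate the dependence of $c(M_1,M_2)$ on each of the two pairings separately. I will first show that, for fixed $M_1$ and $S_2$, the parity of $c(M_1,M_2)$ does not depend on the choice of $M_2$; by the symmetric argument it cannot depend on $M_1$ either, so it depends only on $S_1$ and $S_2$, as required.

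For the first step I intend to isolate one pair of $M_1$ at a time. Fix $\{a,b\}\in M_1$ with $a<b$, and examine the contribution $c(ab,M_2)$ arising from crossings with the pairs of $M_2$. A pair $\{c,d\}\in M_2$ crosses $\{a,b\}$ precisely when exactly one of $c,d$ lies in the open interval $(a,b)$. Partitioning $M_2$ according to how many of the two endpoints of each pair lie in $(a,b)$, pairs with $0$ or $2$ endpoints inside contribute nothing to $c(ab,M_2)$ while contributing an even count to $|S_2\cap(a,b)|$, and pairs with exactly one endpoint inside contribute $1$ to each of the two quantities. This yields
\[
c(ab,M_2)\equiv |S_2\cap(a,b)|\pmod 2,
\]
an expression in which $M_2$ no longer appears. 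Note that since $S_1$ and $S_2$ are disjoint, $a,b\notin S_2$, so the open interval $(a,b)$ is unambiguously the right set to count.

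Summing over $\{a,b\}\in M_1$ then gives
\[
c(M_1,M_2)\equiv \sum_{\{a,b\}\in M_1}|S_2\cap(a,b)|\pmod 2,
\]
so $c(M_1,M_2)\bmod 2$ is independent of $M_2$. Exchanging the roles of $(M_1,S_1)$ and $(M_2,S_2)$ (since the crossing relation is symmetric in the two pairings) gives independence of $M_1$ as well, and together these establish that the parity depends only on $(S_1,S_2)$. I do not expect a genuine obstacle here; the whole argument rests on the elementary observation that within any perfect matching of a finite totally ordered set, the number of pairs straddling a fixed open interval has the same parity as the number of elements of the set lying in that interval. The only care needed is to keep the accounting clean when expanding the sum, and the symmetry step is immediate from the definition of $c(M_1,M_2)$.
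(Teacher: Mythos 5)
Your proof is correct and follows essentially the same route as the paper's: both fix a pair $\{a,b\}\in M_1$, observe that $c(ab,M_2)\equiv |S_2\cap(a,b)| \pmod 2$ because pairs of $M_2$ with both or neither endpoint in $(a,b)$ contribute an even amount, sum over $M_1$, and then invoke symmetry to remove the dependence on $M_1$. No gaps.
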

    \begin{proof}
    Suppose $S=\{b_1,...,b_{2k}\}$ is of size $2k$ satisfying $b_1<\dots<b_{2k}$. Without loss of generality, we map $b_i$ to $i$ for each $1\le i\le 2k$ and consequently $S$ is mapped into $\{1,\dots,2k\}$.
    
        Fixing $M_1$, we consider the value of $c(ab,M_2)$, where $ab\in M_1$. Let $S_2'=\{c\in S_2\mid a<c<b\}$ and each crossing between $ab$ and $M_2$ is formed by $ab$ and $cd\in M_2$ satisfying $c\in S_2'$ and $d\in S_2-S_2'$. For each $c\in S_2'$, $c$ either pairs with another $e\in S_2'$, or pairs with $d\in S_2-S_2'$. This gives us 
        
         $$c(ab,M_2)=|\{c\in S_2'\mid cd\in M_2,d\in S_2-S_2'\}|=|S_2'|-|\{c\in S_2'\mid ce\in M_2,e\in S_2'\}|.$$
         Furthermore, $|\{c\in S_2'\mid ce\in M_2,e\in S_2'\}|$ is an even number. Consequently, we have $c(ab,M_2)\mod 2=|S_2'|\mod 2$.
       
       As $c(M_1,M_2)\mod 2=\sum_{ab\in M_1} c(ab,M_2) \mod 2$, $c(M_1,M_2)\mod 2$ is independent of the form of $M_2$. Similarly, by fixing $M_2$ we can prove that $c(M_1,M_2)\mod 2$ is independent of the form of $M_1$. The lemma is proved as the value of $c(M_1,M_2)\mod 2$ remains unchanged when $M_1$ and $M_2$ are modified arbitrarily.
    \end{proof}
     By Lemma \ref{lem:partcross}, now we can also denote $c(M_1,M_2)\mod 2$ as $c(S_1,S_2)\mod 2$. Now we are ready to prove Theorem \ref{lem:4andpiMG}. 
     The main objective of the proof is to demonstrate the equality between two summations, given by $F$ and $F_\pi$ through Lemma \ref{lem:MGvalue}. The main idea is to partition the summations into discrete components, identify the numerical relationships between these components, and then prove the equality within each component.

\begin{proof}[Proof of Theorem \ref{lem:4andpiMG}]
     By Lemma \ref{lem:MGvalue}, we only need to prove that for each integer $0\le k\le n/2$ and distinct $1\le b_1,\dots,b_{2k}\le n$, 
     
    $$F_\pi(b_1\dots b_{2k})=\sum_{M:M\text{ is a pairing of }\{b_1,\dots ,b_{2k}\}}{(-1)^{c(M)}\prod_{b_ib_j\in M}F_\pi(b_ib_j)}$$

    For convenience, we use $a_i$ to denote $\pi(b_i)$ for each $1\le i\le 2k$ in the following proof. As $F$ is a matchgate signature, we already have that
    
    $$F_{\pi}(b_1\dots b_{2k})
        =F(a_1\dots a_{2k})
        = \sum_{M_\pi:M_\pi\text{ is a pairing of }\{a_1,\dots ,a_{2k}\}}(-1)^{c(M_\pi)}\prod_{a_ia_j\in M_\pi}F(a_ia_j)
    $$
    
   
   In particular, $F_\pi(b_ib_j)=F(a_ia_j)$, and we only need to show that for each integer $0\le k\le n/2$ and distinct $1\le b_1,\dots,b_{2k}\le n$,
   \begin{align*}
       &\sum_{M_\pi:M_\pi\text{ is a pairing of }\{a_1,\dots ,a_{2k}\}}(-1)^{c(M_\pi)}\prod_{a_ia_j\in M_\pi}F(a_ia_j)\\
    =&\sum_{M:M\text{ is a pairing of }\{b_1,\dots ,b_{2k}\}}{(-1)^{c(M)}\prod_{b_ib_j\in M}F(a_ia_j)}
   \end{align*}
    
   We prove this by induction. By the given condition this is true when $k=1,2$. Now suppose this is true for $k=1,\dots ,p-1$, and we focus on the situation that $k=p$. 
    We use $a_1'<\dots <a_{2p}'$ to rename $a_1,\dots ,a_{2p}$, and let $\tau$ be a 1-to-1 mapping such that $a_i=a_{\tau(i)}'$. 
If $i$ and $\tau(i)$ are both even or odd, we call $i$ a stable point, otherwise we call $i$ a mutual point. For a pair $b_ib_j$ or $a_ia_j$, if both $i,j$ are stable or both are mutual, we call it a SS pair or a MM pair respectively. Otherwise, we call it a SM pair.
    We also remark that the number of mutual points must be even. Otherwise, the number of odd positions before and that after the permutation can not be equal, which forms a contradiction. This also implies that the number of stable points must be even.
    
    Now, we make a partition over all the pairings. We may assume that the number of stable points $2s$ is greater than, or equal to, the number of mutual point $2p-2s$. This is because $F$ remains a matchgate signature under the permutation $(234\dots (2p)1)$, and this permutation makes all the stable points become mutual and all the mutual points become stable. We use $\mathcal{M}_g$ to denote the set of all pairings of $\{b_1,\dots ,b_{2p}\}$ that contain exactly $g$ SS pairs and $\mathcal{M}_g'$ to denote the set of all pairings of $\{a_1,\dots ,a_{2p}\}$ that contain exactly $g$ SS pairs. We let $m_g=\sum_{M\in \mathcal{M}_g}(-1)^{c(M)}\prod_{b_ib_j\in M}F(a_ia_j)$ and $m_g'=\sum_{M\in \mathcal{M}_g'}(-1)^{c(M_\pi)}\prod_{a_ia_j\in M_\pi}F(a_ia_j)$. Using these notations, it is sufficient to demonstrate that
    \[\sum_{g=0}^s m_g=\sum_{g=0}^s m_g'\]
    
   If $S$ is a subset of $\{b_1,\dots,b_{2p}\}$, we use $\pi(S)$ to denote the set $\{a_i|b_i\in S\}$. Let $S_1,S_2$ be a partition of $\{b_1,\dots,b_{2p}\}$ and $S_1$ only contains stable points. We emphasize that except for the appearance of specific remark, the restriction that $S_1$ only contains stable points always exists in the following analysis, even though it is omitted in some summations using $S_1$ as a symbol. By Lemma \ref{lem:partcross}, we know that for arbitrary pairing $M_1$ of $S_1$ and $M_2$ of $S_2$, $c(M_1,M_2)\mod 2$ is a constant. Assume $S_1=\{b_{i_1},\dots,b_{i_{2q}}\}$ satisfying $b_{i_1}<\dots<b_{i_{2q}}$, by setting $M_1=\{b_{i_1}b_{i_2},\dots,b_{i_{2q-1}}b_{i_{2q}}\}$, we have 
    \begin{align*}
        &c(M_1,M_2)\mod 2\\
        =&\sum_{ab\in M_1} c(ab,M_2) \mod 2\\
        =&\sum_{j=1}^q {i_{2j}-i_{2j-1}-1}\mod 2\\
        =&\sum_{j=1}^q {i_{2j}+i_{2j-1}+1}\mod 2\\
        =&q+\sum_{j=1}^{2q} {i_{j}}\mod 2
    \end{align*}

    Similarly, we have $c(\pi(S_1),\pi(S_2))=q+\sum_{j=1}^{2q} {\tau(i_{j})}\mod 2$. Since all vertices in $S_1$ are stable, for each $1\le j\le 2q$, $i_j\mod 2=\tau(i_j)\mod 2$ and as a result $c(S_1,S_2)\mod 2=c(\pi(S_1),\pi(S_2))\mod 2$. 
    
    Now we define $\Omega(S_1)$ to be the summation over all pairings that can be separated into a pairing of $S_1$ and a pairing of $S_2$. $\Omega(S_1)$ can be written as the following form.
    \begin{align*}
        \Omega(S_1)=&\sum_{M:M=M_1\cup M_2, M_1,M_2\text{ is a pairing of } S_1,S_2\text{ respectively}}{(-1)^{c(M)}\prod_{b_ib_j\in M}F(b_ib_j)}\\
        =&\sum_{M:\dots\text{(same as the last line)}}{(-1)^{c(M_1)+c(M_2)+c(M_1,M_2)}\prod_{b_ib_j \in M_1} F(b_ib_j) \prod_{b_ib_j\in M_2} F(b_ib_j)}\\
        =&(-1)^{c(S_1,S_2)}(\sum_{M_1:M_1\text{ is a pairing of } S_1}{(-1)^{c(M_1)}\cdot\prod_{b_ib_j\in M_1}F(a_ia_j)})\\
        &\text{  \  \ \  \ \ }\cdot (\sum_{M_2:M_2\text{ is a pairing of } S_2}{(-1)^{c(M_2)}\prod_{b_ib_j\in M_2}F(a_ia_j)})\\
    \end{align*}
  
    If $S_1,S_2\neq \emptyset$, from the induction condition and the fact that $c(S_1,S_2)\mod 2=c(\pi(S_1),\pi(S_2))\mod 2$ we have
    \begin{align*}
        \Omega(S_1)=&(-1)^{c(S_1,S_2)}(\sum_{M:M\text{ is a pairing of } S_1}{(-1)^{c(M)}\cdot\prod_{b_ib_j\in M}F(a_ia_j)})\\
        &\text{  \  \ \  \ \ }\cdot (\sum_{N:N\text{ is a pairing of } S_2}{(-1)^{c(N)}\prod_{b_ib_j\in N}F(a_ia_j)})\\
        =&(-1)^{c(\pi(S_1),\pi(S_2))}(\sum_{M_\pi:M_\pi\text{ is a pairing of }\pi(S_1)}(-1)^{c(M_\pi)}\cdot\prod_{a_ia_j\in M_\pi}F(a_ia_j))\\
        &\text{  \  \ \  \ \ }\cdot (\sum_{N_\pi:N_\pi\text{ is a pairing of }\pi(S_2)}(-1)^{c(N_\pi)}\prod_{a_ia_j\in N_\pi}F(a_ia_j))\\
        =&\Omega(\pi(S_1))
    \end{align*}
    
    If we consider the summation $\sum_{|S_1|=2}\Omega(S_1)$, we can find that each pairing in $\mathcal{M}_g$ appears in the summation $g$ times. Consequently, we have\footnote{Here we also omit the restriction to $S_1$ that it is composed of stable points.\label{ft:omit}}
    \[\sum_{g=1}^s gm_g=\sum_{|S_1|=2}\Omega(S_1)=\sum_{|S_1|=2}\Omega(\pi(S_1))=\sum_{g=1}^s gm_g'\]  
    If $s=p$, all the points are stable points and $m_g=m_g'=0$ for each $0\le g\le s-1$. Then by this equation we also have $m_s=m_s'$ and complete the proof.
    
    Now we may assume $s<p$. Similarly, for each $1\le t\le s$, we can also get that\footref{ft:omit}
    \[\sum_{g=t}^s \binom{g}{t} m_g=\sum_{|S_1|=2t}\Omega(S_1)=\sum_{|S_1|=2t}\Omega(\pi(S_1))=\sum_{g=t}^s \binom{g}{t}m_g'\]
     We remark that the equation still holds when $t=s$ since $s<p$ and consequently $S_2\neq \emptyset$. These $s$ equations form a system whose coefficient matrix is an upper triangular matrix. As a result, this system implies that $m_g=m_g'$ for $g=1,\dots ,s$.
    
    If the number of stable points is strictly greater than the number of mutual point, then $\mathcal{M}_0=\emptyset$ and $m_0=m_0'=0$, which completes the proof.     
    Otherwise, $s=p/2$. We take two stable points and two mutual points as $S_1=\{b_{i_1},\dots,b_{i_4}\}$. We remark that although this selection of $S_1$ violates the restriction described above, $c(S_1,S_2)\mod 2=2+\sum_{j=1}^4 i_j\mod 2=2+2+\sum_{j=1}^4 \tau(i_j)\mod 2=c(\pi(S_1),\pi(S_2))$ still holds and the above analysis still works. Consequently, we have $\Omega(S_1)=\Omega(\pi(S_1))$. For a pairing in $\mathcal{M}_0$, each pair is a SM pair. Therefore, each pairing in $\mathcal{M}_0$ is counted $\binom{2s}{2}$ times in $\sum_{S_1}\Omega(S_1)$\footnote{Here the omitted restriction is that $S_1$ is composed of two stable points and two mutual points.}. For a pairing in $\mathcal{M}_g$, it is composed of $2s-2g$ SM pairs, $g$ SS pairs and $g$ MM pairs. Therefore, each pairing in $\mathcal{M}_g$  is counted $\binom{2s-2g}{2}+g^2$ times in $\sum_{S_1}\Omega(S_1)$. Consequently,
    \[\sum_{g=0}^s (\binom{2s-2g}{2}+g^2)m_g=\sum_{S_1}\Omega(S_1)=\sum_{S_1}\Omega(\pi(S_1))=\sum_{g=0}^s (\binom{2s-2g}{2}+g^2)m_g'\]
    As $m_g=m_g'$ for each $1\le g\le s$, $m_0=m_0'$.
\end{proof}

\subsection{Permutable matchgate signatures}\label{sec:easyMP}
\begin{definition}\label{def:MP}
    Suppose $f$ is a signature of arity $n$. For a permutation $\pi\in S_n$, we use $f_\pi$ to denote the signature
    
$$f_\pi(x_1,\dots,x_n)=f(\pi(x_1),\dots,\pi(x_n))$$
If for each $\pi\in S_n$, $f_\pi$ is a matchgate signature, we say $f$ is a permutable matchgate signature.

We use $\mathscr{M}_{P}$ to denote the set of all the permutable matchgate signatures.
\end{definition}

The following property can be easily verified.
  \begin{lemma}
        If $f$ is a permutable matchgate signature of arity $k$, then for arbitrary $1\le p\le k$ and $\alpha\in \{0,1\}^p$, $f^\alpha$ is also a permutable matchgate signature.
        \label{lemPin}
    \end{lemma}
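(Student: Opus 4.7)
The plan is to reduce the permutability of $f^\alpha$ to the permutability of $f$ itself by exploiting the fact that pinning commutes with any permutation that fixes the pinned coordinates. Fix $1 \le p \le k$ and $\alpha \in \{0,1\}^p$; I need to verify that $(f^\alpha)_\sigma \in \mathscr{M}$ for every $\sigma \in S_{k-p}$, which is exactly the condition for $f^\alpha \in \mathscr{M}_P$.

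Given such a $\sigma$, I would lift it to a permutation $\pi \in S_k$ that fixes $\{1, \ldots, p\}$ and acts as a shifted copy of $\sigma$ on $\{p+1, \ldots, k\}$: explicitly, $\pi(i) = i$ for $i \le p$ and $\pi(p+j) = p + \sigma(j)$ for $1 \le j \le k-p$. Unfolding the definitions then yields the commutation identity $(f^\alpha)_\sigma = (f_\pi)^\alpha$, since both sides map $(y_1, \ldots, y_{k-p})$ to $f(\alpha_1, \ldots, \alpha_p, y_{\sigma(1)}, \ldots, y_{\sigma(k-p)})$. By the permutability of $f$ we have $f_\pi \in \mathscr{M}$, so it suffices to show that $\mathscr{M}$ is closed under pinning, which will give $(f_\pi)^\alpha \in \mathscr{M}$ and hence $(f^\alpha)_\sigma \in \mathscr{M}$.

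For the closure step, I would use the fact that both unary signatures $[1,0]$ and $[0,1]$ lie in $\mathscr{M}$: each is realized by a matchgate on at most two vertices directly from Definition \ref{defMG}. Pinning the $i$th variable of a matchgate signature to $0$ (respectively $1$) is then precisely the gadget obtained by attaching $[1,0]$ (respectively $[0,1]$) to the $i$th external node. Since the external nodes of a matchgate lie on its outer face in clockwise order, these unary matchgates can be glued onto the boundary without creating any crossings, and the resulting graph is again a matchgate whose signature is the pinned one. Iterating this attachment $p$ times along the pinned coordinates establishes $(f_\pi)^\alpha \in \mathscr{M}$.

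There is no essential obstacle here; the two points requiring a little care are the commutation identity $(f^\alpha)_\sigma = (f_\pi)^\alpha$, which is a routine but unavoidable bookkeeping step about how $\pi$ was constructed to fix the first $p$ coordinates, and the planar closure of $\mathscr{M}$ under attaching unary matchgates, which is a standard property of matchgate composition. Since $\sigma \in S_{k-p}$ was arbitrary, these two observations together give $f^\alpha \in \mathscr{M}_P$, completing the plan.
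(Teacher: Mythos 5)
Your argument is correct: the commutation identity $(f^\alpha)_\sigma=(f_\pi)^\alpha$ for the lift $\pi$ fixing the pinned coordinates, combined with closure of $\mathscr{M}$ under pinning (attaching the unary matchgates $[1,0]$ and $[0,1]$ at a boundary external node, which preserves planarity and the cyclic order of the remaining external nodes), gives exactly what is needed. The paper states this lemma without proof (``can be easily verified''), and your two ingredients are precisely the natural verification it has in mind, so there is nothing to flag.
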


The following theorem shows that $\mathscr{M}_{P}$ is closely related to the dichotomy for $\text{\#CSP}\langle \mathcal{PL}\rangle$ and $\#R_D\text{-CSP}\langle \mathcal{PL}\rangle$, which can be derived from Theorem \ref{plCSP} and \ref{plRDCSP} respectively.

\begin{theorem} \label{thmCSPpl}
    If $\mathcal{F}\subseteq \mathscr{A}$ or $\mathcal{F}\subseteq \mathscr{P}$ or $\mathcal{F}\subseteq \widehat{\mathscr{M}_P}$, $\text{\#CSP}(\mathcal{F})\langle \mathcal{PL}\rangle$ is computable in polynomial time; otherwise it is $\text{\#P}$-hard.

    If $\mathcal{F}\subseteq \mathscr{A}$ or $\mathcal{F}\subseteq \mathscr{P}$ or $\mathcal{F}\subseteq \widehat{\mathscr{M}_P}$, $\#R_D\text{-CSP}\langle \mathcal{PL}\rangle$ is computable in polynomial time; otherwise it is $\text{\#P}$-hard.
\end{theorem}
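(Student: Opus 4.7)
The plan is to reduce Theorem~\ref{thmCSPpl} to Theorems~\ref{plCSP} and~\ref{plRDCSP} by exploiting two elementary facts: (i) holographic transformation commutes with variable permutation, $\widehat{f_\pi}=(\widehat{f})_\pi$ for every signature $f$ and permutation $\pi$, which follows from a direct calculation using the tensor form of $H_2^{\otimes n}$; and (ii) the classes $\mathscr{A}$ and $\mathscr{P}$ are closed under permutation of arguments, which is immediate from Definitions~\ref{defA} and~\ref{defP}.

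\emph{Tractability.} The cases $\mathcal{F}\subseteq\mathscr{A}$ and $\mathcal{F}\subseteq\mathscr{P}$ already follow from Theorem~\ref{genCSP}. Assume $\mathcal{F}\subseteq\widehat{\mathscr{M}_P}$ and take any \#CSP$(\mathcal{F})\langle\mathcal{PL}\rangle$ instance. I would fix a planar embedding of its underlying graph; around each signature vertex $u$, the clockwise order of its incident edges induces a permutation $\pi_u$ of the argument positions of $f_u$. Substituting $(f_u)_{\pi_u}$ for $f_u$ in place yields an equivalent instance whose variables are in clockwise order at every vertex, i.e.\ a Pl-\#CSP instance. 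By permutability of $\widehat{f_u}$ together with (i), $\widehat{(f_u)_{\pi_u}}=(\widehat{f_u})_{\pi_u}\in\mathscr{M}$, so $(f_u)_{\pi_u}\in\widehat{\mathscr{M}}$ for every $u$, and Theorem~\ref{plCSP} provides a polynomial-time algorithm. The $\#R_D$ version is identical, via Theorem~\ref{plRDCSP}, since permuting argument positions does not alter how often any variable appears.

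\emph{Hardness.} Assume $\mathcal{F}$ is contained in none of $\mathscr{A}$, $\mathscr{P}$, $\widehat{\mathscr{M}_P}$. If already $\mathcal{F}\not\subseteq\widehat{\mathscr{M}}$, Theorem~\ref{plCSP} shows that Pl-\#CSP$(\mathcal{F})$ is \#P-hard, and the trivial reduction Pl-\#CSP$(\mathcal{F})\leq_T\text{\#CSP}(\mathcal{F})\langle\mathcal{PL}\rangle$ (obtained by forgetting the cyclic ordering) transfers the hardness. Otherwise $\mathcal{F}\subseteq\widehat{\mathscr{M}}$ but $\mathcal{F}\not\subseteq\widehat{\mathscr{M}_P}$, so I may pick $f\in\mathcal{F}$ and a permutation $\pi$ with $\widehat{f_\pi}=(\widehat{f})_\pi\notin\mathscr{M}$, i.e.\ $f_\pi\notin\widehat{\mathscr{M}}$. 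Set $\mathcal{F}'=\mathcal{F}\cup\{f_\pi\}$: by~(ii), $\mathcal{F}'$ is contained in neither $\mathscr{A}$ nor $\mathscr{P}$, and by construction $\mathcal{F}'\not\subseteq\widehat{\mathscr{M}}$, so Pl-\#CSP$(\mathcal{F}')$ is \#P-hard by Theorem~\ref{plCSP}. I then reduce Pl-\#CSP$(\mathcal{F}')$ to \#CSP$(\mathcal{F})\langle\mathcal{PL}\rangle$ by replacing every vertex labelled $f_\pi$ with one labelled $f\in\mathcal{F}$ whose incident edges are relabelled by $\pi^{-1}$: the underlying planar graph is unchanged, every variable's degree is preserved, and the evaluated constraint $f_\pi(\cdot)=f(\pi(\cdot))$ is preserved. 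The $\#R_D$ version is identical using Theorem~\ref{plRDCSP}.

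The main obstacle is Case~2 of the hardness direction: one must realize the ``bad'' permuted signature $f_\pi$ using $f\in\mathcal{F}$ itself. This is possible precisely because \#CSP$(\mathcal{F})\langle\mathcal{PL}\rangle$ imposes no clockwise-order constraint on the variables around each constraint vertex, so the twist by $\pi$ is absorbed for free. The remaining ingredients---the commutation identity $\widehat{f_\pi}=(\widehat{f})_\pi$, the closure of $\mathscr{A}$ and $\mathscr{P}$ under permutation, and the invariance of planarity and variable multiplicities under the edge relabelling---are routine bookkeeping.
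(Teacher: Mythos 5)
Your proposal is correct and follows essentially the same route as the paper: the paper defines $\mathcal{F}'=\{f_\pi \mid f\in\mathcal{F},\ \pi\in S_{\mathrm{arity}(f)}\}$, establishes $\text{\#CSP}(\mathcal{F})\langle\mathcal{PL}\rangle\equiv_T\text{Pl-\#CSP}(\mathcal{F}')$ via exactly the two edge-relabelling reductions you describe, and transfers the tractability criteria using the same three facts (closure of $\mathscr{A}$ and $\mathscr{P}$ under permutation, commutation of the $H_2$ transformation with permutation, and the definition of $\mathscr{M}_P$). Your version merely unpacks this equivalence into explicit cases and, in the hardness direction, adjoins only the single offending permuted signature $f_\pi$ rather than all of $\mathcal{F}'$.
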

\begin{proof}
    Let $\mathcal{F'}=\{f_\pi|f\in \mathcal{F},\text{arity}(f)=n, \pi\in S_{n}\}$. By Remark \ref{remembd}, we have 
    
    $$\text{\#CSP}(\mathcal{F})\langle \mathcal{PL}\rangle\equiv_T\text{Pl-\#CSP}(\mathcal{F'})$$
    
    By Definitions \ref{defA} and \ref{defP}, $\mathcal{F'}\subseteq \mathscr{A}$ or $\mathscr{P}$ if and only if $\mathcal{F}\subseteq \mathscr{A}$ or $\mathscr{P}$ respectively. Furthermore,  $\mathcal{F'}\subseteq \widehat{\mathscr{M}}$ if and only if $\mathcal{F}\subseteq \widehat{\mathscr{M}_P}$ by Definition \ref{def:MP} and Theorem \ref{thmHT}. Consequently, we are done by replacing the tractable criteria for $\mathcal{F'}$ in Theorem \ref{plCSP} with those for $\mathcal{F}$. The same argument holds for $\#R_D\text{-CSP}\langle \mathcal{PL}\rangle$ as well.
\end{proof}

\subsection{Characterizations of permutable matchgate signatures}\label{sec:MpSymGad}
By Theorem \ref{lem:4andpiMG}, we can use the following property to characterize the permutable matchgate signature.

\begin{corollary}
    Suppose $F$ is a normalized matchgate signature of arity $n$. Then $F$ is a permutable matchgate signature if and only if for each $1\le a<b<c<d
    \le n$, $F(ab)F(cd)=F(ac)F(bd)=F(ad)F(bc)$.
    \label{coro:4MPequal}
\end{corollary}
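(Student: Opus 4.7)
The plan is to reduce both directions of the corollary to the single $k=2$ instance of Lemma \ref{lem:MGvalue}, because Theorem \ref{lem:4andpiMG} promotes that quartic identity, when verified on every permuted copy $F_\pi$, into full matchgateness. Lemma \ref{lem:MGvalue} at $k=2$ applied to $F$ on a sorted quadruple $a<b<c<d$ reads
\[F(abcd)=F(ab)F(cd)-F(ac)F(bd)+F(ad)F(bc),\]
with the signs $+,-,+$ dictated by the crossing numbers of the three pair-partitions of $\{a,b,c,d\}$. Thus $F\in\mathscr{M}_P$ precisely when the analogous quartic identity holds for each $F_\pi$ on every sorted quadruple.

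For the ``if'' direction, I assume the three equalities $F(ab)F(cd)=F(ac)F(bd)=F(ad)F(bc)$ (for every $a<b<c<d$), denote the common value by $K$, fix an arbitrary $\pi\in S_n$, and fix sorted indices $a'<b'<c'<d'$. Let $p<q<r<s$ be the sorted version of $\{\pi(a'),\pi(b'),\pi(c'),\pi(d')\}$. Since the index expression depends only on the set of $1$-positions, $F_\pi(a'b'c'd')=F(pqrs)$; applying the displayed identity to $F$ at $p<q<r<s$ together with the hypothesis at that quadruple gives $F(pqrs)=K-K+K=K$. The three products $F_\pi(a'b')F_\pi(c'd')$, $F_\pi(a'c')F_\pi(b'd')$, $F_\pi(a'd')F_\pi(b'c')$ are exactly the three pair-partition products of $\{p,q,r,s\}$ (in some order), so each equals $K$, and their alternating sum collapses to $K=F_\pi(a'b'c'd')$. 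Theorem \ref{lem:4andpiMG} then yields $F_\pi\in\mathscr{M}$ for every $\pi$, so $F\in\mathscr{M}_P$.

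For the ``only if'' direction, I extract the equalities by swapping which partition product carries the minus sign. Fix $a<b<c<d$ and let $\pi$ be the transposition exchanging $b$ and $c$. Since $F\in\mathscr{M}_P$, $F_\pi\in\mathscr{M}$, so Lemma \ref{lem:MGvalue} at $k=2$ yields $F_\pi(abcd)=F_\pi(ab)F_\pi(cd)-F_\pi(ac)F_\pi(bd)+F_\pi(ad)F_\pi(bc)$. A direct index-set computation, namely $F_\pi(ab)=F(ac)$, $F_\pi(cd)=F(bd)$, $F_\pi(ac)=F(ab)$, $F_\pi(bd)=F(cd)$, with $F_\pi(ad)=F(ad)$, $F_\pi(bc)=F(bc)$ and $F_\pi(abcd)=F(abcd)$, rewrites this as $F(abcd)=F(ac)F(bd)-F(ab)F(cd)+F(ad)F(bc)$. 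Subtracting the original displayed identity forces $F(ab)F(cd)=F(ac)F(bd)$. Running the same argument with the transposition that swaps $c$ and $d$ analogously yields $F(ad)F(bc)=F(ac)F(bd)$, completing the three-way equality.

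The main bookkeeping obstacle I anticipate is keeping the action of $\pi$ on the index expression straight, since $F_\pi$ evaluated on an index set corresponds to $F$ on the corresponding image set under (the inverse of) $\pi$, and one must re-sort before applying the $+,-,+$ sign pattern of Lemma \ref{lem:MGvalue}. Beyond this, both directions reduce to a short sign check on four indices, requiring no induction or higher-$k$ manipulation of the matchgate identity.
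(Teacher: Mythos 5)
Your proposal is correct and follows essentially the same route as the paper: both directions reduce to the $k=2$ instance of Lemma \ref{lem:MGvalue} applied to the permuted copies $F_\pi$, with sufficiency obtained by collapsing the alternating sum to the common value $K$ and invoking Theorem \ref{lem:4andpiMG}, and necessity obtained by permuting the quadruple to move the minus sign onto a different pair-partition and subtracting. The paper phrases the necessity step as ``taking different $\pi$'' to produce all three sign patterns at once, whereas you make the same move explicit with the two transpositions $(b\,c)$ and $(c\,d)$; the content is identical.
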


\begin{proof}
    By Theorem \ref{lem:4andpiMG}, $F$ is a permutable matchgate signature if and only if for any permutation $\pi\in S(n)$ and integers $1\le a<b<c<d
    \le n$, $F_{\pi}(abcd)=F_{\pi}(ab)F_{\pi}(cd)-F_{\pi}(ac)F_{\pi}(bd)+F_{\pi}(ad)F_{\pi}(bc)$. Notice that $F(abcd)=F_{\pi}(\pi^{-1}(a)\pi^{-1}(b)\pi^{-1}(c)\pi^{-1}(d))$ also holds for arbitrary $\pi$, hence by taking different $\pi$, we have the following equation.
    
    \begin{align*}
        F(abcd)=&F(ab)F(cd)+F(ac)F(bd)-F(ad)F(bc)\\
        =&F(ab)F(cd)-F(ac)F(bd)+F(ad)F(bc)\\
        =&-F(ab)F(cd)+F(ac)F(bd)+F(ad)F(bc)
    \end{align*}
    which implies $F(ab)F(cd)=F(ac)F(bd)=F(ad)F(bc)$.

    On the other hand, if $F(ab)F(cd)=F(ac)F(bd)=F(ad)F(bc)$ holds for arbitrary $1\le a<b<c<d
    \le n$, then for any $\pi$, $F_{\pi}(abcd)=F(\pi(a)\pi(b))F(\pi(c)\pi(d))=F_{\pi}(ab)F_{\pi}(cd)-F_{\pi}(ac)F_{\pi}(bd)+F_{\pi}(ad)F_{\pi}(bc)$.
\end{proof}

Using the description above, we are able to classify and characterize the normalized permutable matchgate signatures in the following way.

\begin{lemma}
    For a normalized permutable matchgate signature $F$ of arity $n\ge 4$, one of the following holds:
    \begin{enumerate}
        \item (Pinning type) For any distinct $1\le a,b\le n$, we have $F(ab)=0$.
        \item (Parity type 1) There exist distinct $1\le a,b,c,d\le n$, such that $F(ab)F(cd)\neq 0$.
        \item (Parity type 2) There exist distinct $1\le a,b,c\le n$, such that $F(ab)F(ac)F(bc)\neq 0$.
        \item (Matching type) There exist distinct $1\le a,b,c,d\le n$, such that $F(bc),F(bd),F(cd)=0$, but $F(ab)\neq 0$.
    \end{enumerate}
\end{lemma}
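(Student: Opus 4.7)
The plan is to reformulate the claim as a structural classification of an auxiliary graph. Introduce the \emph{support graph} $H$ on vertex set $\{1,\dots,n\}$ whose edge set is $\{\{i,j\}:F(ij)\neq 0\}$. Under this translation, Type~1 says $H$ is edgeless, Type~2 says $H$ admits a matching of size~$2$ (two disjoint edges), Type~3 says $H$ contains a triangle, and Type~4 says $H$ has an edge $\{a,b\}$ together with two further vertices $c,d$ for which none of $\{b,c\},\{b,d\},\{c,d\}$ is an edge. The task is then purely combinatorial: show that these four possibilities exhaust all simple graphs on $n\ge 4$ vertices.

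First I would dispose of Types~1 and 2 by direct inspection of $H$. In the remaining case $H$ is nonempty and every two of its edges share a common vertex. The key step is to invoke the classical structural lemma that a simple graph in which every two edges share a vertex is either a star $K_{1,m}$ with $m\ge 1$ or the triangle $K_3$. I would include a short inline proof: fix two edges $\{v,a\},\{v,b\}$ meeting at $v$; any further edge either passes through $v$ (so $H$ is still a star centered at $v$) or must equal $\{a,b\}$, giving the triangle on $\{v,a,b\}$; and in the triangle case any putative fourth edge would have to meet all three triangle edges, which one checks is impossible.

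If $H=K_3$, Type~3 holds immediately. Otherwise $H$ is a star with some center $a$ and at least one leaf $b$, so $F(ab)\neq 0$. Since $n\ge 4$, the set $\{1,\dots,n\}\setminus\{a,b\}$ contains at least two elements, and I would pick any two distinct $c,d$ from it. Because $b$ is adjacent only to $a$ in a star, $F(bc)=F(bd)=0$; because neither $c$ nor $d$ is the center, $\{c,d\}$ is not an edge of the star, so $F(cd)=0$. This verifies Type~4.

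The only nontrivial ingredient is the star-or-triangle lemma, and the hypothesis $n\ge 4$ is used exactly at the step of choosing the spectator vertices $c$ and $d$. It is perhaps worth remarking that the classification itself does not invoke the permutable matchgate hypothesis or Corollary~\ref{coro:4MPequal}: the four types form an exhaustive (non-disjoint) case split valid for any Boolean signature of arity $n\ge 4$, and the permutable matchgate property will enter only in the downstream analyses of each individual type.
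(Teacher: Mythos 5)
Your proof is correct, and it takes a genuinely different route from the paper's. The paper argues by contradiction on a fixed quadruple: assuming none of the four types holds, it picks $a,b$ with $F(ab)\neq 0$ and arbitrary distinct $c,d\notin\{a,b\}$ (this is where $n\ge 4$ enters for the paper as well), then uses the negations of Parity types 1 and 2 to force $F(cd)=0$, $F(ac)F(bd)=F(ad)F(bc)=0$, $F(ac)F(bc)=0$ and $F(ad)F(bd)=0$, from which either $F(ac)=F(ad)=0$ or $F(bc)=F(bd)=0$; either alternative (after possibly exchanging the roles of $a$ and $b$) exhibits Matching type, a contradiction. You instead classify the support graph globally via the star-or-triangle lemma for intersecting families of $2$-sets. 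Both arguments are purely combinatorial, and your closing remark is accurate: the paper's proof likewise never invokes the permutable matchgate hypothesis or Corollary~\ref{coro:4MPequal}. What your route buys is that it makes the global structure of the support explicit --- in the star case you have in effect already proved the first assertion of Lemma~\ref{lem:matchingtypeproperty} (the existence of a distinguished center $x$ through which every nonzero pair passes), which the paper establishes separately later. What the paper's route buys is self-containment: it needs no auxiliary structural lemma, only a short Boolean case analysis on one quadruple. One presentational nit: for the single-edge star $K_{1,1}$ the center is not unique, but your argument is insensitive to which endpoint is declared the center, so nothing breaks.
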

\begin{proof}
    Suppose otherwise. As $F$ is not of Pinning type, there exists $1\le a,b\le n$ such that $F(ab)\neq 0$. Then for any distinct $1\le c,d \le n$ and $c,d\neq a,b$, if $F(cd)\neq 0$, $F$ is of Parity type 1, so we may assume $F(cd)=0$. 

    As $F$ is not of Parity type 1, $F(ac)F(bd)=F(ad)F(bc)=0$. As $F$ is not of Parity type 2, $F(ac)F(bc)=0$ and $F(ad)F(bd)=0$. Consequently, either $F(ac),F(ad)=0$, or $F(bc),F(bd)=0$. In either case $F$ is of Matching type, a contradiction.
\end{proof}
If $F$ is of Parity type 1, then there exist distinct $1\le a,b,c,d\le n$, such that $F(ac)F(bd)=F(ad)F(bc)=F(ab)F(cd)\neq 0$, and we have $F(ab)F(ac)F(bc)\neq 0$, indicating that $F$ is also of Parity type 2. Consequently, Parity type 1 and 2 can be concluded into a single type, denoted as \textit{Parity type}, in which each signature satisfy the condition of Parity type 2.
\begin{lemma}
    If $F$ is a normalized permutable matchgate signature of arity $n$ of Parity type, then there exist a function $G:\{1,\dots,n\}\to \mathbb{C}$ such that for any distinct $1\le a,b \le n$, $F(ab)=G(a)G(b)$.
    \label{lem:paritytypeproperty}
\end{lemma}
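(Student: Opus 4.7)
The Parity type hypothesis supplies three ``pivot'' indices $a_0, b_0, c_0$ with $F(a_0 b_0) F(a_0 c_0) F(b_0 c_0) \neq 0$. The plan is to use these as an anchor and propagate the definition of $G$ outward, relying on Corollary \ref{coro:4MPequal} repeatedly as the sole algebraic tool.

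First, I would fix $G$ on the pivots. Pick any $s \in \mathbb{C}$ with $s^2 = F(a_0 b_0) F(a_0 c_0) / F(b_0 c_0)$ and set
\[G(a_0) = s, \qquad G(b_0) = F(a_0 b_0)/s, \qquad G(c_0) = F(a_0 c_0)/s.\]
The products $G(a_0) G(b_0) = F(a_0 b_0)$ and $G(a_0) G(c_0) = F(a_0 c_0)$ are immediate, while $G(b_0) G(c_0) = F(b_0 c_0)$ follows from the defining equation for $s^2$. This already handles $n = 3$, so from now on assume $n \ge 4$.

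Next, for each $d \notin \{a_0, b_0, c_0\}$, applying Corollary \ref{coro:4MPequal} to $\{a_0, b_0, c_0, d\}$ yields the three-way equality
\[F(a_0 b_0) F(c_0 d) = F(a_0 c_0) F(b_0 d) = F(a_0 d) F(b_0 c_0).\]
Because the pivot products are nonzero, the triple $\bigl(F(a_0 d), F(b_0 d), F(c_0 d)\bigr)$ is either all zero or all nonzero. In the first case I would set $G(d) = 0$; in the second, $G(d) = F(a_0 d)/s$. A short calculation using the displayed equality then confirms $G(x) G(d) = F(xd)$ for every $x \in \{a_0, b_0, c_0\}$.

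Finally, for distinct $d, e$ both outside $\{a_0, b_0, c_0\}$, Corollary \ref{coro:4MPequal} applied to $\{a_0, b_0, d, e\}$ gives
\[F(a_0 b_0) F(de) = F(a_0 d) F(b_0 e).\]
If either $G(d)$ or $G(e)$ is zero, then the right side vanishes, forcing $F(de) = 0 = G(d) G(e)$. Otherwise, substituting $F(b_0 e) = G(b_0) G(e)$ (verified in the previous step) and $G(d) = F(a_0 d)/s$ reduces the desired identity to $s^2 = F(a_0 b_0) F(a_0 c_0) / F(b_0 c_0)$, which holds by the choice of $s$. The main obstacle is nothing more than a careful zero/nonzero case split, kept manageable by the dichotomy that within each four-index instance of Corollary \ref{coro:4MPequal} the three products are simultaneously zero or simultaneously nonzero.
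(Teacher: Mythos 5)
Your proposal is correct and follows essentially the same route as the paper: anchor $G$ on the three pivot indices via a square root of $F(a_0b_0)F(a_0c_0)/F(b_0c_0)$, extend by $G(d)=F(a_0d)/G(a_0)$, and verify all remaining products using the three-way equalities of Corollary~\ref{coro:4MPequal}. The only cosmetic difference is your explicit zero/nonzero case split, which the paper's uniform formula $G(d)=F(a_0d)/G(a_0)$ absorbs automatically.
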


\begin{proof}
    As $F$ is of Parity type, there exist distinct $1\le a,b,c\le n$, such that $F(ab)F(ac)F(bc)\neq 0$.
    To avoid ambiguity, we use $\sqrt{F(ab)}$ to denote a specific complex number $r_{ab}$ satisfying $r_{ab}^2=F(ab)$. We also use $\sqrt{F(ab)F(ac)/F(bc)}$ to denote the complex number $\sqrt{F(ab)}\sqrt{F(ac)}/\sqrt{F(bc)}$. Let $G(a)=\sqrt{F(ab)F(ac)/F(bc)}\neq 0$, $G(b)=\sqrt{F(ab)F(bc)/F(ac)}$ and  $G(c)=\sqrt{F(ac)F(bc)/F(ab)}$.
    For each $1\le d\le n, d\neq a,b,c$, let $G(d)=F(ad)/G(a)$. Since $F(ab)F(ac)F(bc)\neq 0$, $G$ is well-defined.

    It is easy to verify that $F(ab)=G(a)G(b),F(ac)=G(a)G(c),F(bc)=G(b)G(c)$. For each $1\le d\le n, d\neq a,b,c$,
    
    $$G(a)G(d)=F(ad)$$
    $$G(b)G(d)=\frac{F(ad)G(b)G(c)}{G(a)G(c)}=\frac{F(ad)F(bc)}{F(ac)}=\frac{F(bd)F(ac)}{F(ac)}=F(bd)$$
  $$G(c)G(d)=\frac{F(ad)G(c)G(b)}{G(a)G(b)}=\frac{F(ad)F(bc)}{F(ab)}=\frac{F(cd)F(ab)}{F(ab)}=F(cd)$$

    For any distinct $1\le d,e\le n$ satisfying $d,e\neq a,b,c$,

    $$G(d)G(e)=\frac{F(ad)F(be)}{G(a)G(b)}=\frac{F(de)F(ab)}{F(ab)}=F(de)$$
\end{proof}
Now we analyze the property of normalized permutable matchgate signatures of Matching type.
\begin{lemma}
     Suppose $F$ is a normalized permutable matchgate signature of arity $n$ of Matching type. If $F$ is not of Parity type, then there exists an integer $1\le x\le n$ such that for any distinct $1\le s,t\le n$, if $s,t\neq x$, $F(st)=0$.
     Furthermore, if $2\le k\le n/2$ and $1\le b_1<\dots<b_{2k}\le n$, then $F(b_1\dots b_{2k})=0$.
     \label{lem:matchingtypeproperty}
\end{lemma}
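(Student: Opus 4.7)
The plan is to study the \emph{support graph} $H$ on vertex set $\{1,\dots,n\}$ whose edges are the unordered pairs $\{i,j\}$ with $F(ij)\neq 0$, and to show that $H$ is a star. First I would translate the two standing hypotheses into graph-theoretic language. The Matching type witnesses $a,b,c,d$ give at least one edge of $H$, namely $\{a,b\}$. The assumption that $F$ is not of Parity type unpacks, by the paragraph merging the two subtypes, as failure of the Parity type 2 condition, so $H$ is triangle-free; and since every Parity type 1 signature is also of Parity type 2, $F$ also fails Parity type 1, i.e., $H$ contains no matching of size two. Thus $H$ is a triangle-free graph with at least one edge and no two vertex-disjoint edges.

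Next I would show $H$ is a star centered at some $x\in\{a,b\}$. Any edge of $H$ must share a vertex with $ab$, for otherwise it forms a 2-matching with $ab$. Let $E_a$ (resp.\ $E_b$) denote the edges of $H$ that contain $a$ but not $b$ (resp.\ $b$ but not $a$). If $E_a$ and $E_b$ were both nonempty, say $\{a,w\}\in E_a$ and $\{b,w'\}\in E_b$, then $w\neq w'$ would give a 2-matching while $w=w'$ would produce the triangle $\{a,b,w\}$; either is forbidden. Hence one of $E_a,E_b$ is empty, so every edge of $H$ passes through a single vertex $x\in\{a,b\}$. This is precisely the first assertion: $F(st)=0$ whenever $s,t\neq x$.

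For the second assertion I would invoke Lemma \ref{lem:MGvalue}, which applies because $F$ is a normalized matchgate signature and hence of even parity (any matchgate signature is supported on a single parity, and $F(0\dots 0)=1$ pins that parity to even). For $k\geq 2$ and any pairing $M$ of $\{b_1,\dots,b_{2k}\}$, $M$ consists of $k\geq 2$ pairwise disjoint pairs. If every pair $b_ib_j\in M$ had $F(b_ib_j)\neq 0$, then $M$ would realize a matching of size $\geq 2$ inside the star $H$, which is impossible. Hence every term of $\sum_{M}(-1)^{c(M)}\prod_{b_ib_j\in M}F(b_ib_j)$ contains a vanishing factor, and the whole sum is $0$.

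The main subtlety, and the step most prone to slip-ups, is the translation of ``not of Parity type'' into both ``triangle-free'' and ``no $2$-matching'' in $H$: the excerpt formally collapses Parity types 1 and 2 into the Parity type 2 condition, so one has to invoke the implication Parity type 1 $\Rightarrow$ Parity type 2 (noted explicitly just before the statement) to rule out disjoint edges. Once this graph-theoretic picture is in place, the star structure of $H$ and the vanishing of the longer index expressions via Lemma \ref{lem:MGvalue} are routine.
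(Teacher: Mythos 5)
Your proof is correct and follows essentially the same route as the paper: your triangle-free/no-2-matching analysis of the support graph is exactly the paper's three-case contradiction argument (an edge disjoint from $ab$, or edges through $a$ and $b$ at distinct vertices, giving Parity type 1; or at the same vertex, giving Parity type 2), and the second assertion is handled identically via Lemma \ref{lem:MGvalue} by noting every pairing with $k\ge 2$ contains a pair avoiding $x$. The graph-theoretic packaging and the explicit justification of even parity are cosmetic additions, not a different argument.
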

\begin{proof}
Suppose otherwise. Since $F$ is of Matching type, there exist distinct $1\le a,b\le n$ such that $F(ab)\neq 0$. There are three possible cases.
\begin{enumerate}
    \item There exist distinct $1\le s,t\le n, s,t\neq a,b$ such that $F(st)\neq 0$. It is obvious that $a,b,s,t$ can serve as a certificate that $F$ is of Parity type 1.
    \item There exist distinct $1\le s,t\le n, s,t\neq a,b$ such that $F(as)F(bt)\neq 0$. Again, $a,b,s,t$ can serve as a certificate that $F$ is of Parity type 1.
    \item There exist $1\le s\le n, s\neq a,b$ such that $F(as)F(bs)\neq 0$. In this case, $a,b,s$ can serve as a certificate that $F$ is of Parity type 2.
\end{enumerate} 

In each case, $F$ is of Parity type, which is a contradiction.

By Lemma \ref{lem:MGvalue}, if $2\le k\le n/2$ and $1\le b_1<\dots<b_{2k}\le n$, then $F(b_1\dots b_{2k})=\sum_{M:M\text{ is a pairing of }\{b_1,\dots ,b_{2k}\}}{(-1)^{c(M)}\prod_{b_ib_j\in M}F(b_ib_j)}=0$, since for each $M$ of size greater than 2 there always exists $st\in M$ such that $s,t\neq x$. 
\end{proof}

\begin{theorem}\label{lem:SymGadRealize}
    Each permutable matchgate signature $F'$ of arity $n$ can be realized by a symmetric matchgate signature $g$ and $O(n)$ symmetric binary matchgate signatures up to a constant factor in the following way. 
    \begin{enumerate}
        \item If $F'$ is of Pinning type after normalization, then $h=[1,0,\dots,0]$ and $O(n)$ symmetric binary signatures $[0,1,0]$ can realize $F'$.
        \item If $F'$ is of Parity type after normalization, then $h=[1,0,1,0,\dots]$ or $[0,1,0,1,\dots]$ and the $O(n)$ symmetric binary signatures with the form $[1,0,y]$ or $[0,0,1]$ can realize $F'$.
        \item If $F'$ is of Matching type, not of Pinning type and not of Parity type after normalization, then $h=[0,1,0,0,\dots]$ and the $O(n)$ symmetric binary signatures with the form $[1,0,y]$ or $[0,1,0]$ can realize $F'$.
    \end{enumerate}
\end{theorem}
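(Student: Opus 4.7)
The plan is to reduce to the normalized case via Lemma \ref{lem:normalize}, writing $F'(\alpha) = g(\beta)\cdot F(\alpha\oplus\beta)$ for a normalized permutable matchgate signature $F$ and a shift $\beta\in\{0,1\}^n$, and then handle each of the three types (Pinning, Parity, Matching) from the preceding classification. In every case the gadget places $h$ at the center, with at most one symmetric binary attached to each variable of $h$ connecting to a dangling edge; the shift $\beta$ is realized either by appending $[0,1,0]$ flips or by absorbing the flip into the binary itself, depending on what signatures the case permits.

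For the Pinning case, the hypothesis $F(ab)=0$ for all pairs together with Lemma \ref{lem:MGvalue} forces $F=[1,0,\ldots,0]$. Taking $h=[1,0,\ldots,0]_n$ and attaching $[0,1,0]$ at every $i$ with $\beta_i=1$ yields $F'$ up to the constant $g(\beta)$. For the Matching case, Lemma \ref{lem:matchingtypeproperty} produces the special index $x$ and ensures $F$ is supported only on $\alpha=0$ and on the inputs of the form $e_x+e_s$ with $s\neq x$. I take $h=[0,1,0,\ldots,0]_n$, $B_x=[0,1,0]$, and $B_i=[1,0,F(xi)]$ for $i\neq x$. Enumerating the weight-one $\sigma$ in $h$'s support shows that only $\sigma=e_x$ contributes for $\alpha=0$ and only $\sigma=e_s$ contributes for $\alpha=e_x+e_s$, matching $F$ exactly. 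Appending an extra $[0,1,0]$ at each variable with $\beta_i=1$ then produces $F'$, and all binaries used lie in the allowed set $\{[1,0,y],[0,1,0]\}$.

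The Parity case is the main obstacle, since $[0,1,0]$ is not allowed and the shift must be hidden inside $h$ or the binaries. Lemma \ref{lem:paritytypeproperty} supplies $G:\{1,\ldots,n\}\to\mathbb{C}$ with $F(ab)=G(a)G(b)$. The gadget with $[1,0,1,0,\ldots]_n$ at the center and $[1,0,G(i)]$ at each variable realizes a normalized even-parity matchgate signature whose pair values are $G(a)G(b)=F(ab)$, so by the uniqueness implied by Lemma \ref{lem:MGvalue} it realizes $F$. For $F'$ I use the closed form $F'(\alpha) = g(\beta)\prod_{i:(\alpha\oplus\beta)_i=1} G(i)$ supported on the parity class determined by $|\beta|$, choose $h=[1,0,1,0,\ldots]$ or $h=[0,1,0,1,\ldots]$ according to the parity of $|\beta|$ so that the support of $h$ matches that of $F'$, and set the binary at variable $i$ by four subcases: $[1,0,G(i)]$ if $\beta_i=0$ and $G(i)\neq 0$; $[1,0,0]$ if $\beta_i=0$ and $G(i)=0$; $[1,0,G(i)^{-1}]$ (with a global factor $G(i)$ absorbed into the overall constant) if $\beta_i=1$ and $G(i)\neq 0$; and $[0,0,1]$ if $\beta_i=1$ and $G(i)=0$ in order to pin $\alpha_i=1$. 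A bit-by-bit check confirms that the gadget reproduces $F'$ up to a constant factor, with every binary of the prescribed form.

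The subtle point is ensuring that the pinning binaries $[1,0,0]$ and $[0,0,1]$ arising when $G(i)=0$ correctly force the unique value on which $F'$ can be nonzero at variable $i$, while the parity of $|\beta|$ selects $h$ so that the global parity condition is automatically satisfied without ever invoking $[0,1,0]$.
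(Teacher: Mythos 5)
Your proof is correct and follows essentially the same route as the paper's: normalize via Lemma \ref{lem:normalize}, realize the normalized $F$ by a star gadget whose central signature and leg binaries are dictated by the Pinning/Parity/Matching classification, and then fold the shift $\beta$ back in. The only difference is cosmetic, in the Parity case: the paper commutes each $[0,1,0]$ flip past its leg via a matrix identity and absorbs it into the central signature one at a time, whereas you fix the center's parity according to $|\beta|$ upfront and write the (diagonal) leg binaries $[1,0,y]$ or $[0,0,1]$ directly --- the same computation.
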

\begin{proof}
   Suppose $f$ (or the index expression $F$ of $f$) is a normalization of $F'$. For each case, we first realize $F$, then we analyze $F'$ through Lemma \ref{lem:normalize}. 

   If $F$ is of Pinning type, by Lemma \ref{lem:MGvalue} $f=[1,0,0,\dots,0]$. By Lemma \ref{lem:normalize}, $F'$ can be realized by connecting a $[0,1,0]$ signature to some variables of $F$. 
   
   If $F$ is of Parity type, by Lemma \ref{lem:paritytypeproperty} there exists a function $G$ such that for any distinct $1\le a,b\le n$, $F(ab)=G(a)G(b)$. Then we can use the following gadget to realize $F$: for each $1\le a\le n$, we connect a $[1,0,G(a)]$ signature to the $a$th variable of $h'=[1,0,1,0,\dots]$. By Lemma \ref{lem:normalize}, $F'$ can be realized by connecting a $[0,1,0]$ signature to some variables of $F$. 
   
   Now for some variables of $h'$, they are connected to a $[1,0,G(a)]$ signature, then a $[0,1,0]$ signature, where $a$ is the index of the variable. For each such variable, we replace the $[1,0,G(a)]$ with a $[0,1,0]$ and the $[0,1,0]$ with a $[G(a),0,1]$ respectively. The signature of the gadget remains the same after the replacement as  $\begin{pmatrix}
          1&0\\
          0&G(a)
      \end{pmatrix}\begin{pmatrix}
          0&1\\
          1&0
      \end{pmatrix}=\begin{pmatrix}
          0&1\\
          G(a)&0
      \end{pmatrix}=\begin{pmatrix}
          0&1\\
          1&0
      \end{pmatrix}\begin{pmatrix}
          G(a)&0\\
          0&1
      \end{pmatrix}$.
   
   Now consider the gadget formed by $h'=[1,0,1,0,\dots]$ and all the $[0,1,0]$ connecting to it. If there is an odd number of $[0,1,0]$ signatures, the signature of the gadget is $h=[0,1,0,1,\dots]$. If there is an even number of $[0,1,0]$ signatures, the signature of the gadget is $h=[1,0,1,0,\dots]$. For each $1\le a\le n$, the binary signature connecting to the $a$th variable of $h$ is either $[1,0,G(a)]$ or $[G(a),0,1]$, which has the form $[1,0,y]$ or $[0,0,1]$ up to a constant factor.

   If $F$ is of Matching type, not of Pinning type and not of Parity type, then by Lemma \ref{lem:matchingtypeproperty} there exists $1\le x\le n$ such that the following holds: for any distinct $1\le a,b\le n$, if $F(ab)\neq 0$, then $x\in \{a,b\}$. Let $G(x)=1$ and $G(a)=F(ax)$ for each $1\le a\le n, a\neq x$. It can be verified that the following gadget realize $F$: for each $1\le a\le n, a\neq x$, we connect a $[1,0,G(a)]$ signature to the $a$th variable of $h=[0,1,0,0,\dots]$; we also connect a $[0,1,0]$ signature to the $x$th variable. By Lemma \ref{lem:normalize}, $F'$ can be realized by connecting a $[0,1,0]$ signature to some variables of $F$, which completes the proof. Besides, if there are two $[0,1,0]$ connecting to the $x$th variable of $h$, we can remove them  without changing the signature of the gadget for future convenience. 
\end{proof}

We denote the gadget in the above lemma as the \textit{star gadget} $ST_F$ of $F$, and $h$ as the \textit{central signature} $h_F$ of $ST_F$.
For each $1\le a\le n$, all the binary signatures connecting to the $a$th variable of $h$ in a line form a gadget, and the signature of the gadget is denoted as the \textit{$a$th edge signature}.

\subsection{Realize symmetric signatures}\label{sec:Redctosym}
In this section, we prove the following lemma by gadget construction.  
\begin{theorem} \label{lem:AsymtoSym}
    For each signature $F\in \mathscr{M}_P-\mathscr{A}$ of arity $n$, a symmetric signature $g\in \mathscr{M}-\mathscr{A}$ can be realized by $\{F\}\mid\{[1,0],[1,0,1],[1,0,1,0]\}$ as a planar left-side gadget.
\end{theorem}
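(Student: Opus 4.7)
The plan is a case analysis driven by the classification of permutable matchgate signatures in Section~\ref{sec:MpSymGad}. Since every Pinning type signature lies in $\mathscr{A}$ (its support is a single assignment, always an affine subspace with trivial quadratic form), it suffices to handle the Parity and Matching types after normalization; for each we will exhibit a concrete planar left-side gadget. The hypothesis $F\in\mathscr{M}_P$ lets us place its variables in any desired cyclic order on the boundary, which is what makes the constructions planar, and Lemma~\ref{lemPin} ensures that pinning variables of $F$ preserves membership in $\mathscr{M}_P$.

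For the Parity type, write $F(x)=h_F(x)\prod_a G(a)^{x_a}$ with $h_F$ a parity indicator. The baseline gadget pins all but two chosen variables $a,b$ to $0$ via copies of $[1,0]$; in the even-parity subtype this yields the binary matchgate signature $[1,0,G(a)G(b)]$, which lies outside $\mathscr{A}$ whenever $G(a)G(b)\notin\{1,i,-1,-i\}$. If every pair product lies in the fourth roots of unity, a short argument shows that all nonzero $G(a)$ must be primitive eighth roots of unity---otherwise $F$ would itself be affine. In that regime, if at least three variables share a common $G$-value, pinning the others gives the non-affine ternary matchgate signature $[1,0,G^2,0]$ with $G^2\in\{\pm i\}$; otherwise, I combine several equality signatures $[1,0,1]$ to form the contracted signature $F(y_1,y_1,y_2,y_2,\ldots)$, choosing the pairing so that each contracted pair product either resolves to $-1$ or combines with further pinnings to realize a non-affine symmetric binary such as $[1,-1,1]$. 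The odd-parity subtype is handled analogously, by pinning to three variables that share a common primitive-eighth-root $G$-value to obtain the non-affine symmetric ternary $[0,G,0,G^3]$.

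For the Matching type, $F$ is supported on $\{\emptyset\}\cup\{\{x,a\}:a\in S\}$ with values $1$ and $F(xa)=c_a$. When $|S|=1$, pinning immediately yields $[1,0,c_a]$, and the non-affineness of $F$ forces $c_a\notin\{1,i,-1,-i\}$. When $|S|\geq 2$, the non-affineness is structural because the support itself is not an affine subspace; here I plan to join several copies of $F$ at the special vertex $x$ using $[1,0,1]$ equalities and pin the remaining variables so that only the Hamming-weight-one behaviour of the central $h_F=[0,1,0,\ldots,0]$ survives in the composite. This realizes a symmetric matchgate signature of the form $[0,1,0,\ldots,0]_k$ (up to a scalar) with $k\geq 3$, which is non-affine because its support has more than two points and hence cannot lie on a one-dimensional affine subspace.

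The main obstacle I foresee is the Parity sub-case in which all pair products are fourth roots but no three variables share a $G$-value: every direct pinning to two variables produces an affine binary, so the pairing construction must be carried out carefully, and the argument depends on how the primitive-eighth-root $G$-values are distributed among the variables; verifying that some choice of pairings and pinnings always delivers a non-affine symmetric signature is the central combinatorial step. Planarity is preserved throughout because the right-side signatures used have arity at most three and the variables of $F$ may be freely permuted along the outer boundary by the permutability of $F$.
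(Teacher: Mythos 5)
Your skeleton (discard Pinning type as affine, then split into Parity and Matching types and drive toward the symmetric forms of Lemma~\ref{M-Aform}) matches the paper's, but the two places you yourself flag as delicate are exactly where the argument breaks. In the Parity case, once all edge weights $y_a$ are primitive eighth roots of unity, your fallback of contracting pairs of variables with $[1,0,1]$ and aiming for ``a non-affine symmetric binary such as $[1,-1,1]$'' fails twice over: first, $[1,-1,1]$ has support on both parities, so it is not a matchgate signature and cannot be realized from $\{F\}$ and matchgate right-side signatures at all (and it is in any case affine/degenerate, being $[1,-1]^{\otimes 2}$); second, contracting variables $a,b$ of the even-parity star gadget through $[1,0,1]$ just multiplies the restricted signature by the scalar $1+y_ay_b$, so when $y_ay_b=-1$ the gadget vanishes rather than producing anything useful. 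A concrete instance your plan cannot handle is $n=3$ with $y_1=\alpha$, $y_2=\ii\alpha$, $y_3=-\alpha$ (where $\alpha=e^{2\pi\ii/8}$): no two variables share a $G$-value, every pair pinning gives an affine binary ($[1,0,-1]$, $[1,0,-\ii]$, $[1,0,1]$), and contraction leaves arity one. The paper's resolution is the missing idea: it first \emph{realizes} the binaries $[1,0,y_ay_b]\in\{[1,0,\ii],[1,0,-\ii]\}$ and then attaches $0$--$3$ of them to each variable of $F$ to rotate every edge weight to the common value $\alpha$, yielding the genuinely symmetric $[1,0,\ii,0,-1,0,\dots]_n\notin\mathscr{A}$ for $n\ge3$. (The odd-parity central signature $[0,1,0,1,\dots]$ also needs a separate reduction via a realized $\neq_2$-type binary back to the even case, not just ``three variables sharing a $G$-value''.)

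The Matching-type plan has a similar gap: the edge weights $F(xa)=c_a$ are in general pairwise distinct, and with right-side signatures limited to $[1,0],[1,0,1],[1,0,1,0]$ you cannot rescale individual variables, so ``only the Hamming-weight-one behaviour survives'' does not produce the unweighted $[0,1,0,\dots,0]_k$ — it produces a weighted exactly-one signature that is not symmetric. You also never explain how to pin a variable to $1$ (needed to isolate the weight-one layer), which requires first constructing $[0,0,1]$ from $F$ itself. The paper instead mates \emph{two} copies of $F$ along all but one variable (the ``generalized mating gadget''), which squares the weights and yields symmetric binaries such as $[y_b^2,0,y_a^2]$ and $[y_b^2+y_c^2,0,y_a^2]$; a short argument on moduli ($|y_b^2+y_c^2|\in\{0,\sqrt2\,|y_a^2|,2|y_a^2|\}$, not all three sums zero) then guarantees one of these is $[1,0,r]$ with $r^4\neq0,1$. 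Without either the weight-rotation trick or the mating construction, your proof does not close.
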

  We remark that $[1,0],[1,0,1],[1,0,1,0]\in \mathscr{M}$ and $\mathscr{M}$ is closed under gadget construction. Hence when proving the above lemma, we only need to ensure that the obtained $g$ is symmetric and does not belong to $\mathscr{A}$. 
  
  Besides, such $g$ must have one of the following forms:
\begin{lemma}[\cite{guo2020complexity}]
    Suppose $g\in \mathscr{M}-\mathscr{A}$ and is symmetric. Then $g$ has one of the following forms.
    \begin{enumerate}
        \item $[0,1,0,...,0]_k,k\ge3$;
        \item $[0,...,0,1,0]_k,k\ge3$;
        \item $[1,0,r], r^4\neq 0,1$;
        \item $[1,0,r,0,r^2,...]_k,k\ge3, r^2\neq 0,1$;
        \item $[0,1,0,r,0,r^2,...]_k,k\ge3, r^2\neq 0,1$.
    \end{enumerate}
    \label{M-Aform}
\end{lemma}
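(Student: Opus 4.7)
\emph{Approach.} The plan is to start with an arbitrary symmetric $g=[g_0,g_1,\dots,g_k]\in\mathscr{M}$ and use MGI to extract two structural constraints, then enumerate all sequences satisfying them, and finally remove the members of $\mathscr{A}$. The two constraints are (i) an overall parity, meaning $g_i=0$ for all odd $i$ or $g_i=0$ for all even $i$; and (ii) a quadratic recurrence $g_{i+2}^2=g_ig_{i+4}$ at every index $i$ with $i+4\le k$.

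\emph{Deriving parity and the recurrence from MGI.} For (i), I would apply MGI to pairs $\beta,\gamma$ with $\mathrm{supp}(\beta)\subseteq\mathrm{supp}(\gamma)$: every position in $P$ has $\beta_p=0,\gamma_p=1$, so MGI collapses to $g_{i+1}g_{j-1}\sum_{t=1}^{j-i}(-1)^t=0$ with $i=|\beta|$ and $j=|\gamma|$, forcing $g_{i+1}g_{j-1}=0$ whenever $j-i$ is odd; cascading confines the support of $g$ to a single parity. For (ii), I would apply MGI to carefully chosen $\beta,\gamma$ of \emph{disjoint} supports on four coordinates, with weights $(|\beta|,|\gamma|)=(1,3)$ or $(0,4)$ (matched to the parity from (i)) and with the four positions arranged so that the types $a,b$ alternate: after the parity condition kills every term of mismatched Hamming weight, only terms proportional to $g_ig_{i+4}$ and $g_{i+2}^2$ survive, and the alternating signs combine (after a nontrivial cancellation) into $g_{i+2}^2=g_ig_{i+4}$. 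Sliding the four coordinates over $\{1,\dots,k\}$ gives the recurrence at every valid $i$.

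\emph{Classification.} Assume WLOG odd parity. If $g_1\neq 0$, the recurrence solves inductively to a geometric progression $g_{2i+1}=g_1r^i$ with $r=g_3/g_1$, giving (after scaling) $[0,1,0,r,0,r^2,\dots]_k$; the sub-case $r=0$ collapses to the forward pinning $[0,1,0,\dots,0]_k$. If $g_1=0$, the recurrence $g_3^2=g_1g_5=0$ forces $g_3=0$, and iterating propagates zeros to every $g_{2i+1}$ for which the recurrence $g_{2i+1}^2=g_{2i-1}g_{2i+3}$ applies (i.e., whenever $2i+3\le k$); the only slot left uncontrolled is $g_{k-1}$, yielding the backward pinning $[0,\dots,0,1,0]_k$. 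An ``interior'' singleton $g_j\neq 0$ with $1<j<k-1$ is blocked by a non-vacuous recurrence and does not occur. The even-parity case is entirely parallel, producing $[1,0,r,0,r^2,\dots]_k$ (reducing to $[1,0,r]$ at $k=2$); its two pinning degeneracies $[1,0,\dots,0]_k$ and $[0,\dots,0,1]_k$ are affine indicators of single points and drop out in the next step.

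\emph{Removing $\mathscr{A}$, and the main obstacle.} For arity $2$, $[1,0,r]\in\mathscr{A}$ iff $r\in\{0,\pm 1,\pm\ii\}$, i.e.\ $r^4\in\{0,1\}$, because the affine phase $\ii^{Q(x_1,x_2)}$ realises every fourth root of unity in two variables. For arity $k\ge 3$, the affine quadratic forms over $\mathbb{Z}_2$ cannot uniformly reproduce the phases $r^i$ along all even-weight (resp., odd-weight) vectors unless $r^2\in\{0,1\}$, which gives the stricter non-affine condition. The pinning forms $[0,1,0,\dots,0]_k$ and $[0,\dots,0,1,0]_k$ for $k\ge 3$ have supports of size $k$, which fails to be a $\mathbb{Z}_2$-affine subspace of $\{0,1\}^k$ (a direct closure check rules out, e.g., the $k=4$ case), so they lie outside $\mathscr{A}$. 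The hardest step is the extraction of the recurrence: many natural choices of $\beta,\gamma$ collapse MGI to $0=0$ after the parity condition, and isolating a family of test pairs for which exactly the two desired terms survive with the right signs is the genuinely non-trivial ingredient; a secondary subtlety is the boundary behaviour of the recurrence near index $k$, which is precisely what permits the backward pinning while still forbidding every interior one.
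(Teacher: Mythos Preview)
The paper does not supply its own proof of this lemma; it is quoted from \cite{guo2020complexity} without argument. So there is no in-paper proof to compare against, and I assess your sketch on its own terms.

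Your outline is right in spirit, but the single four-index recurrence $g_{i+2}^{2}=g_{i}\,g_{i+4}$ is strictly weaker than MGI and does not carry the classification you claim. Concretely, take $k=6$ and the even-parity sequence $[1,0,0,0,0,0,c]$: both applicable instances of your recurrence ($g_2^2=g_0g_4$ and $g_4^2=g_2g_6$) read $0=0$ for every $c$, yet the MGI with $\beta=100000$, $\gamma=011111$ collapses to $-g_0g_6=-c=0$, forcing $c=0$. The same breakdown hits your odd-parity induction: with $g_1\neq 0$ and $r=g_3/g_1=0$ you obtain $g_5=0$ from $g_3^2=g_1g_5$, but then $g_5^2=g_3g_7$ is $0=0$ and $g_7$ is left free (e.g.\ $[0,1,0,0,0,0,0,c]_7$ passes your recurrence for all $c$), so your assertion that ``the sub-case $r=0$ collapses to the forward pinning'' does not follow. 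What is actually needed is the full family of MGI relations with arbitrary $|P|$, or equivalently the Pfaffian expansion recorded here as Lemma~\ref{lem:MGvalue}: for a normalized symmetric signature with $F(ab)\equiv r$ that lemma gives $g_{2m}=r^{m}$ for every $m$ in one stroke, after which the odd-parity and $g_0=0$ cases follow via the normalization of Lemma~\ref{lem:normalize}. A smaller issue: your nested $\beta\subseteq\gamma$ argument only ever produces products $g_{i+1}g_{j-1}$ with $1\le i+1$ and $j-1\le k-1$, and hence cannot by itself exclude mixed-parity sequences such as $[1,0,0,1,0]_4$; the parity of matchgate signatures is more directly (and usually) obtained from the parity of the vertex count in a matchgate.
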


\begin{remark}
     In the following, we always construct the left-side gadget of $\text{Holant}(\{f\} | \{[1,0],[1,0,1],[1,0,1,0]\})$. For future convenience, whenever $uv$ is an edge and both $u$ and $v$ are assigned $f$ in a gadget, we actually mean that we replace $uv$ with $uw,wv$ where $w$ is assigned a $[1,0,1]$ signature in the gadget. Besides, if a gadget is formed by connecting two existing gadgets together, we also automatically replace the connecting edge $uv$ with $uw,wv$, where $w$ is assigned a $[1,0,1]$ signature. These operations would not change the signature of the gadget. Consequently, it can be verified that each obtained gadget always remains a left-side gadget in our following constructions.
     \label{remGadConstruct}
\end{remark}

If $F$ is of Pinning type, we have $F\in \mathscr{A}$ as $[1,0,0,0,\dots],[0,1,0]\in \mathscr{A}$ and $\mathscr{A}$ is closed under gadget construction. Consequently, $F$ is either of Parity type, or of Matching type and not of Parity type.

\subsubsection{Parity type case}
Suppose $F$ is of Parity type. By Theorem \ref{lem:SymGadRealize}, three kinds of binary signatures may connect to the central signature $h_F$ in the star gadget $ST_F$, which are $[1,0,0],[0,0,1]$ and $[1,0,y],y\neq 0$ respectively. Suppose $p$ $[1,0,0]$ and $q$ $[0,0,1]$ are connected to $h_F$. Then we have $F=F'\otimes [1,0]^{\otimes p}\otimes [0,1]^{\otimes q}$ where $F'\in\mathscr{M}_P-\mathscr{A}$ is also of Parity type. 

By assigning $n-p-q$ $[1,0]$ signature to $F'$, we realize $[1,0]^{\otimes p}\otimes [0,1]^{\otimes q}$. By assigning $[1,0]^{\otimes p}\otimes [0,1]^{\otimes q}$ back to $F$ again we can realize the $F'$ signature in a planar way. Consequently, we may assume all the signatures connected to $h_F$ has the form $[1,0,y],y\neq 0$ since we can eliminate all of the $[1,0]$s and $[0,1]$s by gadget construction. We also assume that for each $1\le a \le n$, the $a$th variable of $h_F$ is connected to the binary signature $[1,0,y_a]$.

\textbf{Case 1: $h_F=[1,0,1,0,\dots]$.} If $n=2$, $F$ is already symmetric and we are done. Now we assume $n\ge 3$. For distinct $1\le a,b\le n$, if we connect a $[1,0]$ signature to each variable of $F$ except for the $a$th and the $b$th one, we realize the $[1,0,y_ay_b]$ signature. If $(y_ay_b)^4\neq 1$, $[1,0,y_ay_b]\notin \mathscr{A}$ and we are done. 

Now we suppose for any distinct $1\le a,b\le n$, $(y_ay_b)^4=1$. For any distinct $1\le a,b,c\le n$, we have $y_a^8=(y_ay_b\cdot y_ay_c/y_by_c)^4=1$. If $y_a^4=1$ and $y_b^4=-1$, again we are done since $(y_ay_b)^4=-1\neq 1$. If for each $1\le a\le n$, $y_a^4=1$, then $[1,0,y_a]\in \mathscr{A}$. And since $[1,0,1,0,\dots]\in \mathscr{A}$ and $\mathscr{A}$ is closed under gadget construction, we have $F\in \mathscr{A}$, which is a contradiction. 

 Otherwise, for each $1\le a\le n$, $y_a^4=-1$. Let $\alpha=\sqrt{\ii}=e^{2\pi \ii/8}$, then $y_a=\pm\alpha,\pm\ii\alpha$, and $[1,0,y_ay_b]$ is either $[1,0,\ii]$ or $[1,0,-\ii]$. Notice that each element in $\{\pm\alpha,\pm\ii\alpha\}$ can become $\alpha$ by multiplying $\ii$ or $-\ii$ 0 to 3 times. Consequently, we may connect 0 to 3 $[1,0,\ii]$ or $[1,0,-\ii]$ to each variable of $F$ to get a gadget $ST$, such that after replacing $F$ with the gadget $ST_F$, each edge signature of the obtained star gadget $ST_F'$ is exactly $[1,0,\alpha]$. It can be then verified that, the signature of $ST$, which is also the signature of $ST_F'$, is exactly $[1,0,\ii,0,-1,0,-\ii,0,1,\dots]_{n}$. We are done since $[1,0,\ii,0,-1,0,-\ii,0,1,\dots]_{n}\notin \mathscr{A}$ when $n\ge 3$.

 \textbf{Case 2:$h_F=[0,1,0,1,\dots]$.} If $n=2$, we suppose $F(1)=1,F(2)=y$ despite the presence of a constant factor. In other words, $f=\begin{pmatrix}
          0&y\\
          1&0
      \end{pmatrix}$. Because $F\notin\mathscr{A}$, $y^4\neq 1$. By connecting the first variables of two $F$ signatures to each other, we realize $[1,0,y^2]$. If $y^8=(y^2)^4\neq 1$, we are done. Otherwise, $y=\pm\alpha,\pm\ii\alpha$. By connecting the second variables of three $F$ signatures to $[1,0,1,0]$, as shown in Figure \ref{fig:mod3}, we realize $[0,y^2,0,1]$, which is either $[0,\ii,0,1]\notin \mathscr{A}$ or $[0,-\ii,0,1]\notin \mathscr{A}$.

Otherwise, $n\ge 3$. For distinct $1\le a,b\le n$, if we connect a $[1,0]$ signature to each variable of $F$ except for the $a$th and the $b$th one, we realize the $\neq_2^{y_a,y_b}=\begin{pmatrix}
          0&y_b\\
          y_a&0
      \end{pmatrix}$ signature. By connecting the second variable of $\neq_2^{y_a,y_b}$ to the $a$th variable of $F$, we construct a gadget $GG_{F_a}$ whose signature is $F_a$. 
      
      Now we analyze the properties of $F_a$. By replacing $F$ with the corresponding star gadget $ST_F$ in $GG_{F_a}$, we obtain the gadget $ST_a$ with central signature $h_F=[0,1,0,1,\dots]$. For the $a$th variable of $h_F$, it is connected to a $[1,0,y_a]$ signature, then a $\neq_2^{y_a,y_b}$ signature. We replace the $[1,0,y_a]$ with $[0,1,0]$ and the $\neq_2^{y_a,y_b}$ with $[1,0,y_b]$ respectively. As $\begin{pmatrix}
          0&y_b\\
          y_a&0
      \end{pmatrix}\begin{pmatrix}
          1&0\\
          0&y_a
      \end{pmatrix}=\begin{pmatrix}
          0&y_ay_b\\
          y_a&0
      \end{pmatrix}=y_ay_b\begin{pmatrix}
          1&0\\
          0&1/y_b
      \end{pmatrix}\begin{pmatrix}
          0&1\\
          1&0
      \end{pmatrix}$, the signature of the gadget remains the same up to a constant factor after the replacement.  

      Now consider the gadget formed by $h_F=[0,1,0,1,\dots]$ and the $[0,1,0]$ connecting to the $a$th variable of it. The signature of the gadget is $h=[1,0,1,0,\dots]$, which means that $F_a$ belongs to Parity type Case 1. Consequently, we are done unless for each $1\le c\le n, c\neq a$, $y_c^4=1$. 
      
      By connecting the first variable of $\neq_2^{y_a,y_b}$ to the $b$th variable of $F$, we construct a gadget $GG_{F_b}$ whose signature is $F_b$. Similarly, we are done unless for each $1\le c\le n, c\neq b$, $y_c^4=1$. As a result, the only case left is that for each $1\le c\le n$, $y_c^4=1$. In this case, for each $1\le c\le n$, $[1,0,y_c]\in\mathscr{A}$. Since $[0,1,0,1,\dots]\in\mathscr{A}$ and $\mathscr{A}$ is closed under gadget construction, $F\in \mathscr{A}$, a contradiction.
     \begin{figure*}
	\centering
    \subcaptionbox{\label{fig:mod3}}{ \includegraphics[width=0.3\textwidth]{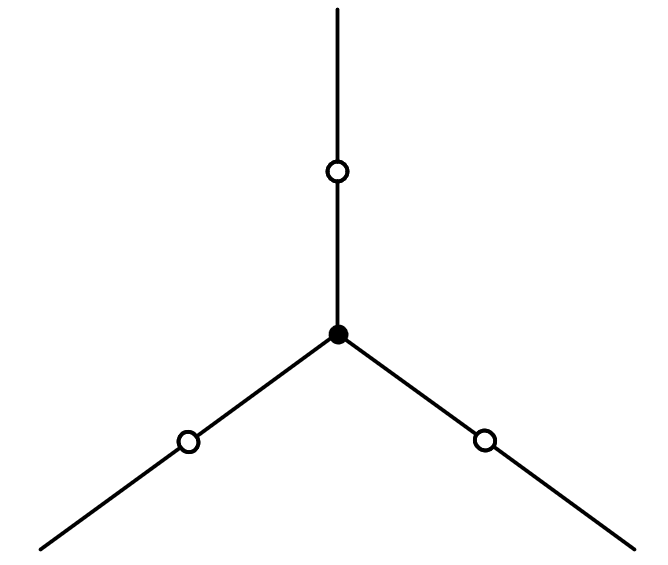}}
			\subcaptionbox{\label{fig:gadhandshake}}{\includegraphics[width=0.45\textwidth]{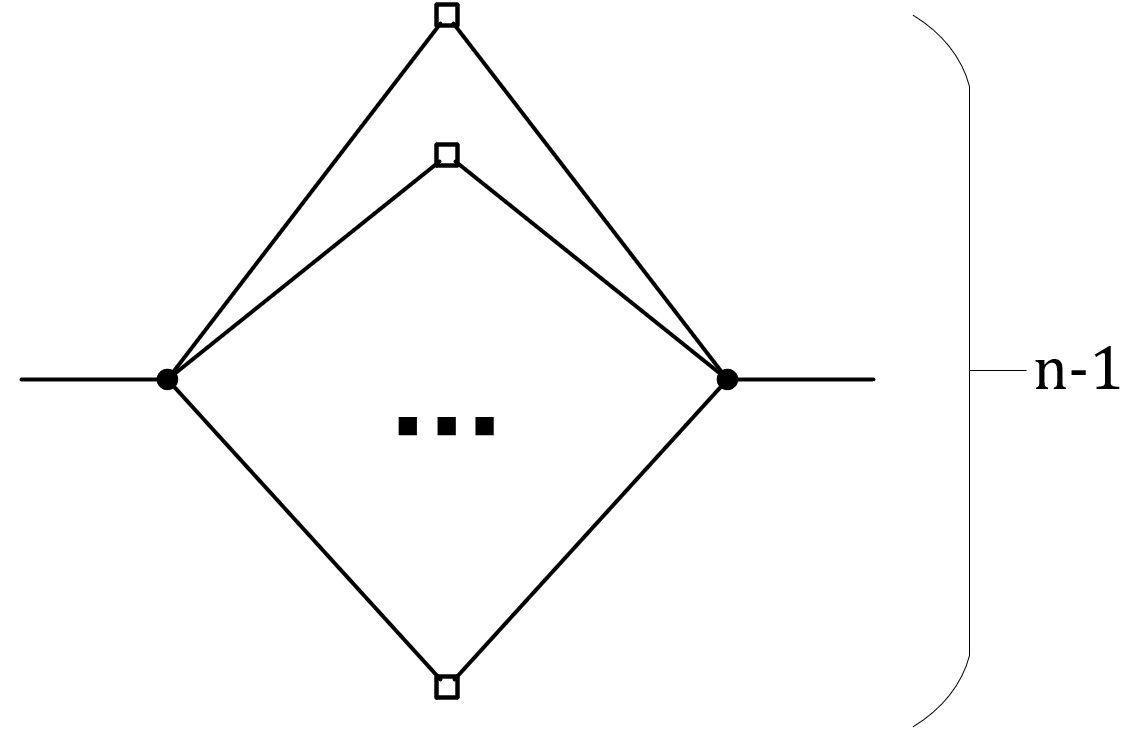}}
	\caption{(a) The construction of a gadget appears in the Case 2 of Parity type. The vertex of degree 3 represented by a solid circle is assigned $[1,0,1,0]$, while each vertex of degree 2 represented by a hollowed circle is assigned $F$ with the second variable connecting to $[1,0,1,0]$. (b) The construction of the generalized mating gadget.}
\end{figure*} 
 \subsubsection{Matching type case}
Suppose $F$ is of Matching type and not of Parity type. By Theorem \ref{lem:SymGadRealize}, two kinds of binary signatures may directly connect to the central signature $h_F$ in the star gadget $ST_F$, which are $[1,0,0]$ and $[1,0,y],y\neq 0$ respectively. Besides, the other variable of each of these signatures might also be connected to a $[0,1,0]$ signature. Suppose $p+q$ $[1,0,0]$ are connected to $h_F$ directly and $q$ of them are further connected to $[0,1,0]$. Then we have $F=F'\otimes [1,0]^{\otimes p}\otimes [0,1]^{\otimes q}$ where $F'\in\mathscr{M}_P-\mathscr{A}$ of arity $k=n-p-q$ is also of Matching type and not of Parity type. 

 When $k=2$, the central signature of $F'$ is $[0,1,0]$, and $F$ is also of Parity type. Therefore, it can be demonstrated that $k\ge 3$. Let $ST_{F'}$ be the star gadget that realize $F'$ as stated in Theorem \ref{lem:SymGadRealize}, and $h_{F'}$ be the central signature. By Theorem \ref{lem:SymGadRealize}, for each $1\le a \le k$, exactly one of the following statements holds. 
 \begin{enumerate}
     \item The $a$th variable of $h_{F'}$ is connected to a $[1,0,y_a],y_a\neq 0$ signature. In this case we denote $a$ as an upright index of $F'$, and the $a$th variable of $F'$ as an upright variable.
     \item The $a$th variable of $h_{F'}$ is connected to a $[1,0,y_a],y_a\neq 0$ signature, then a $[0,1,0]$ signature. In this case we denote $a$ as a reversed index of $F'$, and the $a$th variable of $F'$ as a reversed variable.
 \end{enumerate}
 
 Suppose the number of reversed indexes of $F'$ is $l$. In the following, $F'$ is examined in 4 possible cases based on $l$. In each case, we create generalized mating gadgets defined as follows, which is a generalization of the mating operation in \cite{cai2020realholantodd}. In a \textit{generalized mating gadget}, the variables of $F$ are divided into four parts: \textit{Sum-up} variables, \textit{Fix-to-0} variables, \textit{Fix-to-1} variables and a single \textit{Dangling} variable. We assign $F$ to each vertex of degree $n$ labelled by a solid circle in Figure \ref{fig:gadhandshake} in the following way: for each $1\le a\le n$, if the $a$th variable is the Dangling variable, we let it correspond to the dangling edge. Otherwise we connect it to a vertex $v_a$ of degree 2 labelled by a hollow square. Furthermore, if the $a$th variable is a Sum-up/Fix-to-0/Fix-to-1 variable, we assign a $[1,0,1]$/$[1,0,0]$/$[0,0,1]$ signature to $v_a$, and the gadget become well defined.
 In such constructions, the $p+q$ variables corresponding to $[1,0]$ or $[0,1]$ are always Sum-up variables.
 
 We remark that using $[1,0]$ we may obtain the $[1,0,0]=[1,0]^{\otimes 2}$ signature, and we construct a gadget of $[0,0,1]$ when needed. In the subsequent analysis, we can see that two kinds of generalized mating gadgets would play an important role in each case: the Gadget 1 asks exactly one variable of $F'$ to be Sum-up, while the Gadget 2 asks exactly two variables of $F'$ to be Sum-up. All other upright variables are Fix-to-0 and all other reversed variables are Fix-to-1. 
 
 Also, it should be noted that the subsequent analysis does not include a detailed computation of the values of the signature of each gadget. Nevertheless, readers are encouraged to verify the results of these computations for themselves using the following observation:
 
 Each variable corresponding to $[1,0]$ or $[0,1]$ is a Sum-up variable, and consequently does not contribute to the value of the signature. If $a$ is an upright index, and the $a$th variable of $F'$ is connected to $[1,0]$ or $[1,0,0]$, then the corresponding edge signature of $F'$ does not contribute to the value of the gadget. Besides, the $a$th variable of the central signature $h_{F'}$ are pinned, while $h_{F'}$ remains the form $[0,1,0,0,\dots]$ after this pinning. Similarly, if a reversed variable of $F'$ is connected to $[0,1]$ or $[0,0,1]$, the same statement holds. Furthermore, if a reversed variable is set to be the Sum-up variable in the generalized mating gadget, then the 2 $[0,1,0]$ signatures meet and have no effect to the value of the gadget.
 
\ 

\textbf{Case 1: $l=0$.} Let $1\le a,b,c\le k$ be distinct integers.

 \textbf{Gadget 1:}  Construct a generalized mating gadget. Let the $a$th variable be Dangling, the $b$th variable be Sum-up and all other variables of $F'$ be Fix-to-0. 
 
 By Gadget 1, we realize the signature $[y_b^2,0,y_a^2]$, and we are done unless $(y_a^2)^4=(y_b^2)^4$. Similarly, by replacing $b$ with $c$ in the construction of the above gadget, we are done unless $(y_a^2)^4=(y_c^2)^4$. Now we assume $(y_a^2)^4=(y_b^2)^4=(y_c^2)^4$.

\textbf{Gadget 2:} Construct a generalized mating gadget. Let the $a$th variable be Dangling, the $b$th and the $c$th variable be Sum-up and all other variables of $F'$ be Fix-to-0. 

By Gadget 2, we realize the signature $[y_b^2+y_c^2,0,y_a^2]$. Similarly by replacing $a$ with $b$ or $c$, we realize $[y_a^2+y_c^2,0,y_b^2]$ and $[y_a^2+y_b^2,0,y_c^2]$ respectively. As $(y_a^2)^4=(y_b^2)^4=(y_c^2)^4$, $|y_b^2+y_c^2|,|y_a^2+y_c^2|,|y_a^2+y_b^2|\in\{0,\sqrt{2}|y_a^2|,2|y_a^2|\}$ and one of them does not equal to 0. Without loss of generality let $|y_b^2+y_c^2|\neq 0$, and we have $[y_b^2+y_c^2,0,y_a^2]\in \mathscr{M}-\mathscr{A}$.
 
\

 \textbf{Case 2: $l=1$.} Let $1\le a,b,c\le k$ be distinct integers and $a$ be the reversed index of $F'$.

  \textbf{Gadget 1:} Construct a generalized mating gadget. Let the $a$th variable be Dangling, the $b$th variable be Sum-up and all other variables of $F'$ be Fix-to-0. 
 
 By Gadget 1, we realize the signature $[y_a^2,0,y_b^2]$, and we are done unless $(y_a^2)^4=(y_b^2)^4$. Similarly, by replacing $b$ with $c$ in the construction of the above gadget, we are done unless $(y_a^2)^4=(y_c^2)^4$. Now we assume $(y_a^2)^4=(y_b^2)^4=(y_c^2)^4$.

\textbf{Gadget 2:} Construct a generalized mating gadget. Let the $a$th variable be Dangling, the $b$th and the $c$th variable be Sum-up and all other variables of $F'$ be Fix-to-0. 

By Gadget 2, we realize the signature $[y_a^2,0,y_b^2+y_c^2]$. Similarly by replacing $a$ with $b$ or $c$, we realize $[y_a^2+y_c^2,0,y_b^2]$ and $[y_a^2+y_b^2,0,y_c^2]$ respectively. Similar to the analysis in Case 1, we are done.
  
\

 \textbf{Case 3: $l=2$.} Let $1\le a,b,c\le k$ be distinct integers and $b,c$ be the reversed indices of $F'$.
 
 We first realize the $[0,0,1]$ signature using $F$ and $[1,0]$. By connecting a $[1,0]$ to each variable of $F$ representing $[1,0]$, a $[1,0]$ to each upright variable of $F'$ and a $[1,0]$ to the $b$th variable, we realize the $[0,1]^{\otimes q+1}$ signature. By making $\lfloor q/2\rfloor$ self-loops on $[0,1]^{\otimes q+1}$, we realize either $[0,1]$ or $[0,0,1]$. If we realize $[0,1]$, we can realize $[0,0,1]=[0,1]^{\otimes 2}$ as well by tensor production.

 \textbf{Gadget 1:} Construct a generalized mating gadget. Let the $a$th variable be Dangling, the $b$th variable be Sum-up, the $c$th variable be Fix-to-1 and all other variables of $F'$ be Fix-to-0. 
 
 By Gadget 1, we realize the signature $[y_b^2,0,y_a^2]$, and we are done unless $(y_a^2)^4=(y_b^2)^4$. Similarly, by replacing $b$ with $c$ in the construction of the above gadget, we are done unless $(y_a^2)^4=(y_c^2)^4$. Now we assume $(y_a^2)^4=(y_b^2)^4=(y_c^2)^4$.

\textbf{Gadget 2:} Construct a generalized mating gadget. Let the $a$th variable be Dangling, the $b$th and the $c$th variable be Sum-up and all other variables of $F'$ be Fix-to-0. 

By Gadget 2, we realize the signature $[y_b^2+y_c^2,0,y_a^2]$. Similarly by replacing $a$ with $b$ or $c$, we realize $[y_b^2,0,y_a^2+y_c^2]$ and $[y_c^2,0,y_a^2+y_b^2]$ respectively. Similar to the analysis in Case 1, we are done.

\

 \textbf{Case 4: $l\ge 3$.}  Let $1\le a,b,c\le k$ be distinct reversed indices of $F'$.

We first realize the $[0,0,1]$ signature using $F$ and $[1,0]$. By connecting a $[1,0]$ to each variable of $F$ representing $[1,0]$, a $[1,0]$ to each upright variable of $F'$ and a $[1,0]$ to the $c$th variable, we realize the $[0,1]^{\otimes q+l-1}$ signature. By making $\lfloor (q+l-2)/2\rfloor$ self-loops on $[0,1]^{\otimes q+l-1}$, we realize either $[0,1]$ or $[0,0,1]$. Again, if we realize $[0,1]$, we can realize $[0,0,1]=[0,1]^{\otimes 2}$ as well by tensor production.

  \textbf{Gadget 1:} Construct a generalized mating gadget. Let the $a$th variable be Dangling, the $b$th variable be Sum-up,  all other reversed variables of $F'$ be Fix-to-1 and all other upright variables of $F'$ be Fix-to-0. 
 
 By Gadget 1, we realize the signature $[y_a^2,0,y_b^2]$, and we are done unless $(y_a^2)^4=(y_b^2)^4$. Similarly, by replacing $b$ with $c$ in the construction of the above gadget, we are done unless $(y_a^2)^4=(y_c^2)^4$. Now we assume $(y_a^2)^4=(y_b^2)^4=(y_c^2)^4$.

\textbf{Gadget 2:} Construct a generalized mating gadget. Let the $a$th variable be Dangling, the $b$th and the $c$th variable be Sum-up,  all other reversed variables of $F'$ be Fix-to-1 and all other upright variables of $F'$ be Fix-to-0. 

By Gadget 2, we realize the signature $[y_a^2,0,y_b^2+y_c^2]$. Similarly by replacing $a$ with $b$ or $c$, we realize $[y_b^2,0,y_a^2+y_c^2]$ and $[y_c^2,0,y_a^2+y_b^2]$ respectively. Similar to the analysis in Case 1, we are done.

\section{Conclusions and future directions}\label{conclusion}
In this article, we prove a dichotomy for Pl-$\#R_D$-CSP, and transform the Pl-\#CSP dichotomies into \#CSP dichotomies  on planar graphs. We present the sufficient and necessary condition for a matchgate signature $f$ and a permutation $\pi$ such that $\pi(f)$ is a matchgate signature as well, which can be checked in polynomial time. We also define the concept of permutable matchgate signatures, and characterize them in detail.

There are several topics we are interested in based on these results. In a related article to appear, we prove dichotomies for \#CSP on minor-closed graph classes, based on dichotomies for \#CSP on planar graphs. In that article, characterization for permutable matchgate signatures also plays an important role in both the algorithm part and the hardness proof.

Furthermore, we are interested in discovering an alternative algebraic algorithm for the FKT algorithm. In particular, we consider the following problem, which has the potential to reduce the algorithm's dependence on a certain plane embedding. Given a matchgate signature $f$, can we decide whether there exist a permutation $\pi$ such that $\pi(f)$ is also a matchgate signature in polynomial time? If so, can we find such permutation in polynomial time? 





\bibliography{ref}

\begin{thebibliography}{10}

\bibitem{Cai_Chen_2017}
Jin-Yi Cai and Xi~Chen.
\newblock {\em Complexity Dichotomies for Counting Problems}.
\newblock Cambridge University Press, 2017.

\bibitem{cai2007someResultsMG}
Jin-Yi Cai and Vinay Choudhary.
\newblock Some results on matchgates and holographic algorithms.
\newblock {\em Int J Software Informatics}, 1(1), 2007.

\bibitem{cai2007valiant}
Jin-Yi Cai and Vinay Choudhary.
\newblock Valiant’s {Holant} theorem and matchgate tensors.
\newblock {\em Theoretical Computer Science}, 384(1):22--32, 2007.

\bibitem{cai2009theoryMG}
Jin-Yi Cai, Vinay Choudhary, and Pinyan Lu.
\newblock On the theory of matchgate computations.
\newblock {\em Theory of Computing Systems}, 45(1):108--132, 2009.

\bibitem{cai2017holographicuni}
Jin-Yi Cai and Zhiguo Fu.
\newblock Holographic algorithm with matchgates is universal for planar \#{CSP} over {Boolean} domain.
\newblock In {\em Proceedings of the 49th Annual ACM SIGACT Symposium on Theory of Computing}, pages 842--855, 2017.

\bibitem{cai2020realholantodd}
Jin-Yi Cai, Zhiguo Fu, and Shuai Shao.
\newblock From {Holant} to quantum entanglement and back.
\newblock In {\em 47th International Colloquium on Automata, Languages, and Programming (ICALP 2020)}. Schloss Dagstuhl-Leibniz-Zentrum f{\"u}r Informatik, 2020.

\bibitem{cai2013matchgates}
Jin-Yi Cai and Aaron Gorenstein.
\newblock Matchgates revisited.
\newblock {\em arXiv preprint arXiv:1303.6729}, 2013.

\bibitem{cai2012spin}
Jin-Yi Cai and Michael Kowalczyk.
\newblock Spin systems on k-regular graphs with complex edge functions.
\newblock {\em Theoretical Computer Science}, 461:2--16, 2012.

\bibitem{cai2012gadgets}
Jin-Yi Cai, Michael Kowalczyk, and Tyson Williams.
\newblock Gadgets and anti-gadgets leading to a complexity dichotomy.
\newblock In {\em Proceedings of the 3rd Innovations in Theoretical Computer Science Conference}, pages 452--467, 2012.

\bibitem{cai2014complexity}
Jin-Yi Cai, Pinyan Lu, and Mingji Xia.
\newblock The complexity of complex weighted {Boolean \#CSP}.
\newblock {\em Journal of Computer and System Sciences}, 80(1):217--236, 2014.

\bibitem{cai2017holographic}
Jin-Yi Cai, Pinyan Lu, and Mingji Xia.
\newblock Holographic algorithms with matchgates capture precisely tractable planar \#{CSP}.
\newblock {\em SIAM Journal on Computing}, 46(3):853--889, 2017.

\bibitem{carette2023compositionality}
Titouan Carette, Etienne Moutot, Thomas Perez, and Renaud Vilmart.
\newblock Compositionality of planar perfect matchings.
\newblock In {\em 50th International Colloquium on Automata, Languages, and Programming (ICALP 2023)}, volume 261, pages 120--1, 2023.

\bibitem{creignou2001complexity}
Nadia Creignou, Sanjeev Khanna, and Madhu Sudan.
\newblock {\em Complexity classifications of {Boolean} constraint satisfaction problems}.
\newblock SIAM, 2001.

\bibitem{curticapean2022parameterizing}
Radu Curticapean and Mingji Xia.
\newblock Parameterizing the permanent: Hardness for fixed excluded minors.
\newblock In {\em Symposium on Simplicity in Algorithms (SOSA)}, pages 297--307. SIAM, 2022.

\bibitem{guo2020complexity}
Heng Guo and Tyson Williams.
\newblock The complexity of planar {Boolean} \#{CSP} with complex weights.
\newblock {\em Journal of Computer and System Sciences}, 107:1--27, 2020.

\bibitem{kasteleyn1967graph}
Pieter Kasteleyn.
\newblock Graph theory and crystal physics.
\newblock {\em Graph theory and theoretical physics}, pages 43--110, 1967.

\bibitem{kasteleyn1961statistics}
Pieter~W Kasteleyn.
\newblock The statistics of dimers on a lattice: I. the number of dimer arrangements on a quadratic lattice.
\newblock {\em Physica}, 27(12):1209--1225, 1961.

\bibitem{kasteleyn1963dimer}
Pieter~W Kasteleyn.
\newblock Dimer statistics and phase transitions.
\newblock {\em Journal of Mathematical Physics}, 4(2):287--293, 1963.

\bibitem{margulies2016polynomial}
Susan Margulies and Jason Morton.
\newblock Polynomial-time solvable \#{CSP} problems via algebraic models and {Pfaffian} circuits.
\newblock {\em Journal of Symbolic Computation}, 74:152--180, 2016.

\bibitem{morton2010pfaffian}
Jason Morton.
\newblock Pfaffian circuits.
\newblock {\em arXiv preprint arXiv:1101.0129}, 2010.

\bibitem{temperley1961dimer}
Harold~NV Temperley and Michael~E Fisher.
\newblock Dimer problem in statistical mechanics-an exact result.
\newblock {\em Philosophical Magazine}, 6(68):1061--1063, 1961.

\bibitem{thilikos2022killing}
Dimitrios~M Thilikos and Sebastian Wiederrecht.
\newblock Killing a vortex.
\newblock In {\em 2022 IEEE 63rd Annual Symposium on Foundations of Computer Science (FOCS)}, pages 1069--1080. IEEE, 2022.

\bibitem{valiant1979complexity}
Leslie~G Valiant.
\newblock The complexity of computing the permanent.
\newblock {\em Theoretical computer science}, 8(2):189--201, 1979.

\bibitem{valiant2001quantum}
Leslie~G Valiant.
\newblock Quantum computers that can be simulated classically in polynomial time.
\newblock In {\em Proceedings of the thirty-third annual ACM symposium on Theory of computing}, pages 114--123, 2001.

\bibitem{valiant2002expressiveness}
Leslie~G Valiant.
\newblock Expressiveness of matchgates.
\newblock {\em Theoretical Computer Science}, 289(1):457--471, 2002.

\bibitem{valiant2008holographic}
Leslie~G Valiant.
\newblock Holographic algorithms.
\newblock {\em SIAM Journal on Computing}, 37(5):1565--1594, 2008.

\bibitem{vertigan2005computational}
Dirk Vertigan.
\newblock The computational complexity of tutte invariants for planar graphs.
\newblock {\em SIAM Journal on Computing}, 35(3):690--712, 2005.

\end{thebibliography}

\appendix
\section{A dichotomy for Pl-\#$R_D$-CSP}\label{sec:plrdcsp}

In this section, we prove Theorem \ref{plRDCSP}, the dichotomy for Pl-$\#R_D$-CSP. It is stated as follows.

\begin{theorem}
    Suppose $D\ge3$ is a constant. If $\mathcal{F}\subseteq \mathscr{A}$ or $\mathcal{F}\subseteq \mathscr{P}$ or $\mathcal{F}\subseteq \widehat{\mathscr{M}}$, Pl-$\#R_D$-CSP$(\mathcal{F})$ is computable in polynomial time; otherwise it is $\text{\#P}$-hard.
\end{theorem}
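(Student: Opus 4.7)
The tractability side is essentially free: Pl-$\#R_D$-CSP$(\mathcal{F})$ is a restriction of Pl-\#CSP$(\mathcal{F})$, so if $\mathcal{F}$ lies in any of $\mathscr{A}$, $\mathscr{P}$, or $\widehat{\mathscr{M}}$, tractability follows from Theorem \ref{plCSP}. The entire work is on the hardness side. The plan is to follow the structure of the proof of Theorem \ref{genRDCSP} in \cite[Section 6]{cai2014complexity}, but carry out every gadget construction in a planar manner so that the resulting reductions remain valid for Pl-$\#R_D$-CSP.

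The overall strategy is a bootstrapping one: assuming $\mathcal{F}\not\subseteq \mathscr{A}, \mathscr{P}, \widehat{\mathscr{M}}$, we aim to show that $\text{Pl-}\#R_D\text{-CSP}(\mathcal{F})$ can simulate $\text{Pl-\#CSP}(\mathcal{F})$, after which Theorem \ref{plCSP} gives \#P-hardness. By Lemma \ref{lemcsp=hol}, this is equivalent to showing that within $\text{Holant}(\mathcal{F}\mid \mathcal{EQ}_{\le D})$ realized planarly, we can build all equalities $=_k$ planarly. It suffices to focus on $D=3$, as any reduction for $R_3$ immediately lifts to $R_D$. First I would reproduce the planar analog of \cite[Lemma 6.5]{cai2014complexity} (named Lemma \ref{lem:plr3obtaind1} in the paper): using only $=_1, =_2, =_3$ on the right side and signatures from $\mathcal{F}$ on the left, realize $[0,0,1] = [0,1]^{\otimes 2}$. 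The modification here is crucial — the original lemma realizes a tensor $[0,1]^{\otimes m}$, but in a planar degree-3 setting one cannot freely distribute $m$ pinning copies; realizing exactly two parallel pinned copies avoids any forced crossing. Then I would prove the planar analog of \cite[Lemma 6.2]{cai2014complexity} (named Lemma \ref{lem:plr3obtainh} in the paper): instead of producing a single non-degenerate binary $h$, produce two parallel copies $h^{\otimes 2}$ via a planar gadget and combine them into the non-degenerate binary $g = h^T h$. This avoids the topological conflict that arises when an isolated $h$-dangling edge would have to be woven back into a degree-3 planar incidence graph.

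With a non-degenerate binary signature $g$ in hand, together with the available $=_3$, I would build higher equalities $=_k$ for arbitrary $k$ by iteratively attaching chains of $=_3$'s through $g$ copies, all laid out planarly; this is the planar version of the standard interpolation/amplification used to move from $R_D$-CSP back to full \#CSP. Since every intermediate step uses only planar gadgets with left-side vertices of bounded degree, the composed reduction produces instances of $\text{Pl-\#CSP}(\mathcal{F})$, and hardness then follows from Theorem \ref{plCSP} applied to the non-tractable case.

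The main obstacle is the bookkeeping in the two modified lemmas: planarity forces a fixed cyclic order on the variables of each occurrence of a signature in $\mathcal{F}$, and the degree-3 cap prevents using a single high-degree hub to collect auxiliary connections. The reason replacing $[0,1]^{\otimes m}$ by $[0,0,1]$ and $h$ by $h^{\otimes 2}$ works is that each of these alternatives can be realized by a gadget whose dangling edges emerge in the correct planar order; but verifying that each case in the analysis of \cite[Section 6]{cai2014complexity} admits such an alternative requires going through the casework (e.g., by whether $\mathcal{F}$ has a non-affine signature, a non-product signature, or a signature outside $\widehat{\mathscr{M}}$) and checking planarity of every construction step. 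Once this casework is complete the result follows, which is why I defer the full write-up to Appendix \ref{sec:plrdcsp} and only highlight the modifications in the main text.
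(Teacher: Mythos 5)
Your proposal matches the paper's proof: tractability is inherited from Theorem \ref{plCSP}, and hardness is obtained by adapting the $\#R_D$-CSP reduction of \cite[Section 6]{cai2014complexity} to the planar setting, with exactly the two modifications the paper makes (realizing $[0,0,1]=[0,1]^{\otimes 2}$ in place of $[0,1]^{\otimes m}$, and realizing $h^{\otimes 2}$ to form the non-degenerate binary $g=h^Th$), then recovering $=_2$ by interpolation and bootstrapping all equalities from $=_2$ and $=_3$ to reduce $\text{Pl-\#CSP}(\mathcal{F})$ to $\text{Pl-}\#R_D\text{-CSP}(\mathcal{F})$. The only minor inaccuracy is that the deferred casework is organized by induction on arity and the degeneracy structure of a single non-degenerate $f\in\mathcal{F}$ (which exists since otherwise $\mathcal{F}\subseteq\mathscr{P}$), not by which tractable class $\mathcal{F}$ escapes.
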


The proof of Theorem \ref{plRDCSP} is highly related to the concept of non-degenerate signatures. A signature $f$ of arity $k$ is said to be a \textit{degenerate signature} if it has the form $f=[a_1,b_1]\otimes[a_2,b_2]\otimes\dots\otimes[a_k,b_k]$; otherwise it is said to be a \textit{non-degenerate signature}. In particular, a binary signature is non-degenerate if and only if it is of rank 2.

In this proof, we again introduce the concept of a generalized mating gadget. We revise this concept in the following paragraph.

"In a \textit{generalized mating gadget}, the variables of $F$ are divided into four parts: \textit{Sum-up} variables, \textit{Fix-to-0} variables, \textit{Fix-to-1} variables and a single \textit{Dangling} variable. We assign $F$ to each vertex of degree $n$ labelled by a solid circle in Figure \ref{fig:gadhandshake2} in the following way: for each $1\le a\le n$, if the $a$th variable is the Dangling variable, we let it correspond to the dangling edge. Otherwise we connect it to a vertex $v_a$ of degree 2 labelled by a hollow square. Furthermore, if the $a$th variable is a Sum-up/Fix-to-0/Fix-to-1 variable, we assign a $[1,0,1]$/$[1,0,0]$/$[0,0,1]$ signature to $v_a$, and the gadget become well defined.
 We remark that using $[1,0]$ we may obtain the $[1,0,0]=[1,0]^{\otimes 2}$ signature, and we construct a gadget of $[0,0,1]$ when needed. "
 
\begin{figure*}
	\centering	\includegraphics[width=0.45\textwidth]{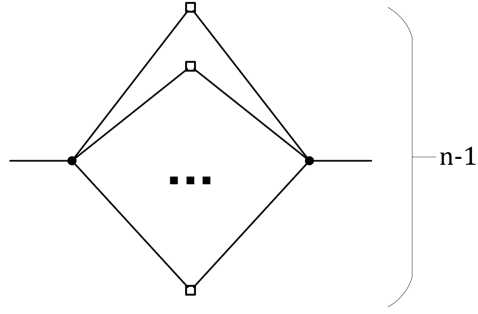}
	\caption{The construction of the generalized mating gadget.}
    \label{fig:gadhandshake2}
\end{figure*} 
The proof of Theorem \ref{plRDCSP} involves three lemmas, similar to that of Theorem \ref{genRDCSP} originally presented in in \cite[Section 6]{cai2014complexity}. In fact, the proof of \cite[Lemma 6.1]{cai2014complexity} can be straight forwardly extends to the first lemma involved, stated as follows.
\begin{lemma}
    Suppose $g$ is a non-degenerate binary signature. Then Pl-\#$R_3\text{-CSP}(\mathcal{F}\cup\{g,=_2\})\le_T$Pl-\#$R_3\text{-CSP}(\mathcal{F}\cup\{g\})$.
    \label{lem:plr3obtain=2}
\end{lemma}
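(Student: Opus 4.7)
The plan is to follow the interpolation strategy of Cai--Lu--Xia \cite[Lemma 6.1]{cai2014complexity}, verifying that its gadgets respect both planarity and the degree-$3$ restriction. The core observation is that chaining $n$ copies of $g$ in a line, connected by intermediate right-side $=_2$ vertices, realises the matrix power $g^n$ as a planar left-side binary gadget: every internal $=_2$ vertex has degree $2$, so the $\#R_3$ bound is preserved, and the chain is trivially planar with its two dangling edges at opposite ends, so the embedding of the surrounding instance extends locally by drawing the chain along the original $=_2$ edge.

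Given an instance $I$ of Pl-$\#R_3$-CSP$(\mathcal{F}\cup\{g,=_2\})$ with $m$ occurrences of the binary equality, I would form instances $I_n$ for $n=1,2,\dots$ by replacing every $=_2$ with the chain gadget for $g^n$. Each $I_n$ is then a valid Pl-$\#R_3$-CSP$(\mathcal{F}\cup\{g\})$ instance. In the generic case, where $g$ is diagonalisable as $g=P^{-1}\operatorname{diag}(\lambda_1,\lambda_2)P$, expanding the product of matrix factors across all $m$ substitutions and collecting by the total number of $\lambda_1$-factors yields an expression of the form
\begin{equation*}
    Z(I_n) = \sum_{i+j=m} c_{i,j}\,\lambda_1^{in}\lambda_2^{jn},
\end{equation*}
where the coefficients $c_{i,j}$ depend on $I$ and $g$ but not on $n$. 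Querying the Pl-$\#R_3$-CSP$(\mathcal{F}\cup\{g\})$ oracle on $I_1,\dots,I_{m+1}$ produces a Vandermonde-type linear system in the unknowns $c_{i,j}$; provided that $\lambda_1/\lambda_2$ is not a root of unity of order at most $m$, the system is invertible in polynomial time, and I recover $Z(I) = Z(I_0) = \sum_{i+j=m} c_{i,j}$.

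The residual cases are when $\lambda_1/\lambda_2$ is a root of unity of finite order $k$, or when $g$ fails to be diagonalisable (a single eigenvalue $\lambda$ with a nontrivial Jordan block). In the root-of-unity case, $g^k$ is a nonzero scalar multiple of the identity matrix, so the length-$k$ chain directly realises $=_2$ up to an explicit nonzero constant, and dividing through completes the reduction. In the Jordan case, the entries of $g^n$ are polynomial-in-$n$ rather than purely exponential-in-$n$, and a suitably adapted polynomial interpolation still recovers $Z(I)$ from polynomially many oracle calls. The main obstacle, and essentially the only novelty beyond \cite{cai2014complexity}, is verifying that every chain replacement remains compatible with the planar embedding of $I$ and does not push any variable degree beyond $3$; both conditions hold for the reasons sketched in the first paragraph, so the extension from $\#R_3$-CSP to Pl-$\#R_3$-CSP is indeed routine.
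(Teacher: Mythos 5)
Your proposal is correct and takes essentially the same route as the paper: the paper offers no independent proof of this lemma, stating only that the interpolation argument of \cite[Lemma 6.1]{cai2014complexity} "straightforwardly extends" to the planar, degree-bounded setting. Your reconstruction --- chains of $g$ realizing $g^n$ through degree-$2$ intermediate variables, the Vandermonde interpolation with the root-of-unity and Jordan-block cases handled separately, and the explicit check that the chain replacement preserves both planarity and the $\#R_3$ degree bound --- is precisely that extension.
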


The second lemma corresponds to \cite[Lemma 6.5]{cai2014complexity}. 
\begin{lemma}
    Suppose $f\in \mathcal{F}$ of arity $k$ is a non-degenerate signature. Then one of the following holds:
    \begin{itemize}
        \item There exists a non-degenerate binary signature $g$ such that Pl-Holant$(\mathcal{F}\cup\{g\}|[1,0],[1,0,1])\le_T$Pl-Holant$(\mathcal{F}|[1,0],[1,0,1])$.
        \item Pl-Holant$(\mathcal{F}\cup\{[0,0,1]\}|\{[1,0],[1,0,1]\})\le_T$Pl-Holant$(\mathcal{F}|\{[1,0],[1,0,1]\})$.
    \end{itemize}
    \label{lem:plr3obtaind1}
\end{lemma}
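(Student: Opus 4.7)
The plan is to adapt the proof of \cite[Lemma 6.5]{cai2014complexity} while enforcing the two modifications anticipated in the excerpt: every gadget is constructed planarly, and the target in the second case is $[0,0,1]=[0,1]^{\otimes 2}$ rather than a general $[0,1]^{\otimes m}$.

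If $k=2$, then $f$ itself is a non-degenerate binary signature and we are done with $g=f$. Assume henceforth $k\ge 3$. As a first attempt, for each distinct pair $i<j$ in $\{1,\ldots,k\}$, I would attach $[1,0]$ to each of the other $k-2$ dangling edges of $f$. This is manifestly a planar left-side gadget (unary attachments to the periphery of $f$) and realizes the binary signature $h_{ij}(x_i,x_j)=f(0,\ldots,x_i,\ldots,x_j,\ldots,0)$. If some $h_{ij}$ is non-degenerate, I take it as $g$ and the first alternative of the lemma holds.

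Otherwise every $h_{ij}$ has rank at most one, and I would invoke the structural analysis from \cite[Lemma 6.5]{cai2014complexity}: the simultaneous vanishing of the $2\times 2$ determinants obtained by pinning $k-2$ variables to zero, combined with non-degeneracy of $f$, forces the support of $f$ to be concentrated on inputs of sufficiently large Hamming weight, so that the relevant mating outputs collapse to a tensor power of $[0,1]$. In this case I would employ a planar mating gadget: two copies of $f$ placed as horizontal mirror images, with variable $i$ of the top copy paired with variable $i$ of the bottom copy through a degree-two vertex assigned either $[1,0,1]$ (Sum-up) or a pair of $[1,0]$'s applied separately to each copy (Fix-to-$0$), while the Dangling positions are routed to the outer face. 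This pairing-by-index layout is planar by construction and, in the structural case, yields $[0,1]^{\otimes m}$ for some $m\ge 1$.

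Finally, from $[0,1]^{\otimes m}$ I would obtain $[0,0,1]$ planarly as follows: if $m=2$ we are done; if $m=1$, two disjoint copies of the gadget realize $[0,1]\otimes[0,1]=[0,0,1]$; if $m\ge 3$, $\lfloor(m-2)/2\rfloor$ self-loops through $[1,0,1]$ drawn on the outer face reduce the arity by two at each step, terminating at $[0,0,1]$ when $m$ is even or at $[0,1]$ (then tensor-squared) when $m$ is odd. The main obstacle lies in the structural step: verifying that the ``all slices rank one'' classification of \cite[Lemma 6.5]{cai2014complexity} can indeed be executed with the planar pairing-by-index layout above without forcing crossings, and that only Fix-to-$0$ and Sum-up (the two options available to us, since Fix-to-$1$ would require $[0,0,1]$, which is precisely what we aim to construct) suffice to reach the target.
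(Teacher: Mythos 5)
Your base case, your planar pinning layout, and your reduction from $[0,1]^{\otimes m}$ to $[0,0,1]$ all match the paper. The problems are in your ``otherwise'' branch. First, the paper's argument is an induction on arity: it asks whether some $(k-1)$-ary slice $f^{x_i=0}$ is non-degenerate and recurses, and the structural analysis only gets a foothold because the remaining case hands you a slice $f^{x_1=0}$ that is \emph{degenerate but not identically zero}, hence writable as $[a_2,b_2]\otimes\cdots\otimes[a_k,b_k]$. Your hypothesis --- that all \emph{binary} zero-slices $h_{ij}$ are degenerate --- is strictly weaker and does not yield this tensor decomposition: for instance $f=[1,0]\otimes\bigl([1,1]^{\otimes 3}+[0,1]^{\otimes 3}\bigr)$ is non-degenerate of arity $4$, every $h_{ij}$ has rank one, yet $f^{x_1=0}$ is a non-degenerate ternary signature, so the classification you want to ``invoke'' does not apply to $f$ directly. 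The recursion on $f^{x_i=0}$ (with the realized $g$ or $[0,0,1]$ pulled back through a $[1,0]$ pinning) cannot be dropped.

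Second, and more seriously, the structural claim itself is false as stated: degeneracy of all binary zero-slices does not force the support of $f$ to high Hamming weight, and the mating output need not collapse to $[0,1]^{\otimes m}$. The residual family in the paper's analysis is $f=[1,b_1]\otimes\cdots\otimes[1,b_k]+d[0,1]^{\otimes k}$ with $d\neq 0$: here every $h_{ij}=[1,b_i]\otimes[1,b_j]$ is degenerate, yet $f(0\cdots 0)=1$. Concretely, for $f=[1,1]^{\otimes 3}+d[0,1]^{\otimes 3}$ your mating gadget (one Dangling, two Sum-up) outputs $\begin{pmatrix}4 & 4+d\\ 4+d & 4+2d+d^2\end{pmatrix}$, whose determinant is $3d^2\neq 0$ --- a non-degenerate binary signature, not a tensor power of $[0,1]$. (That particular output happens to land in the first alternative anyway, but it refutes your case split, and you give no argument that the mating output is always non-degenerate, or always a power of $[0,1]$, across this family.) The paper instead handles this family with a different gadget: it realizes the unary slices $p_i=[1,b_i]$ and contracts them back into $f$ through $[1,0,1]$. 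If some $b_i=\pm\mathfrak{i}$, the factor $1+b_i^2=0$ annihilates the tensor summand and leaves $[0,1]^{\otimes k-1}$ (second alternative); otherwise contracting $p_3,\dots,p_k$ leaves a rank-two binary signature (first alternative). You need this contraction gadget and the accompanying case split on $b_i^2=-1$; the index-paired mating gadget with only Sum-up and Fix-to-$0$ does not suffice.
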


The proof of Lemma \ref{lem:plr3obtaind1} is also similar to that of \cite[Lemma 6.5]{cai2014complexity}, with the exception that we realize $[0,0,1]=[0,1]^{\otimes 2}$ instead of $[0,1]^{\otimes m}$.
Once we realize $[0,1]^{\otimes m}$, we use 2 copies of $[0,1]^{\otimes m}$ to construct a generalized mating gadget. We let one variable be Dangling and all other $m-1$ variables be Sum-up. By this gadget, we realize $[0,0,1]$ in a planar way. It can be verified that with this change, all gadgets constructed become planar and the proof still holds in the planar setting. For clarity, we provide a complete proof, though most analysis are similar to that of \cite[Lemma 6.5]{cai2014complexity}.

\begin{proof}
    We prove this by induction. If $k=2$, the first statement trivially holds. Now suppose this lemma holds for all $k<n$ and consider the case when $k=n$. If there exists $1\le i\le k$ such that $f^{x_i=0}$ is non-degenerate, we are done by induction. If for all $1\le i\le k$, $f^{x_i=0}$ is identically 0, then $f=c[0,1]^{\otimes k}$, which is a contradiction since $f$ is non-degenerate. Consequently, we may suppose there exists $1\le i\le k$ such that $f^{x_i=0}$ is degenerate but not identically 0.

    Without loss of generality, we let $i=1$ and we have $f^{x_1=0}=[a_2,b_2]\otimes[a_3,b_3]\otimes\dots\otimes[a_k,b_k]$ and $[a_j,b_j]\neq [0,0]$ for each $2\le j\le k$. Let $L=\{l|2\le l\le k, a_l\neq 0\}$. If $|L|\neq k-1$,  then by further fixing $x_l$ to 0 through $[1,0]$ we realize the signature $h=[0,1]^{\otimes k-1-|L|}$. We use 2 copies of $h$ to construct a generalized mating gadget. We let one variable be Dangling and all other $k-2-|L|$ variables be Sum-up. By this gadget, we realize $[0,0,1]$ in a planar way and the second statement holds.

    For convenience, we always ignore a constant factor in the following analysis. Consequently we may assume $f^{x_1=0}=[1,b_2]\otimes[1,b_3]\otimes\dots\otimes[1,b_k]$ and $f^{x_2=0}=c[1,b_1']\otimes[1,b_3']\otimes\dots\otimes[1,b_k']$. As $f^{x_1=0}(0\dots0)=f^{x_2=0}(0\dots0)=f(0\dots0)$, $c=1$. For an arbitrary string $\beta=\beta_2\dots \beta_k\in \{0,1\}^{k-1}-\{1\dots1\}$, without loss of generality we assume $\beta_2=0$. Then we have $f^{x_1=1}(\beta)=f^{x_2=0}(1\beta_3\dots \beta_k)=b_1'f^{x_2=0}(0\beta_3\dots \beta_k)=b_1'f^{x_1=0}(\beta)$. Let $b_1=b_1'$, and consequently we have
    $$f=[1,b_1]\otimes f^{x_1=0}+d[0,1]^{\otimes k}=[1,b_1]\otimes\dots\otimes[1,b_k]+d[0,1]^{\otimes k}$$
    And as $f$ is non-degenerate, $d\neq 0$.

    Using $k-1$ [1,0] we may realize $p_i=f^{x_1=\dots=x_{i-1}=x_{i+1}=\dots=x_k=0}=[1,b_i]$. 
    If there exists $i$ such that $b_i=\pm \mathfrak{i}$, by connecting $p_i$ to the $i$th variable of $f$ via [1,0,1], we realize $h=[0,1]^{\otimes k-1}$ up to a constant factor $cb_i$. Again, by using 2 copies of $h$ to construct a generalized mating gadget we realize $[0,0,1]$, and the second statement holds.

    Otherwise, for $3\le i\le k$, by connecting $p_i$ to the $i$th variable of $f$ via [1,0,1], we realize $g=\begin{pmatrix}
        1 & c_1\\
        c_2 & c_1c_2+d'
    \end{pmatrix}$ up to a constant factor for some $c_1,c_2,d'$ satisfying $d'\neq 0$. As $g$ is of rank 2, $g$ is non-degenerate, and consequently the first statement holds.
\end{proof}
The third lemma extends the result in \cite[Lemma 6.2]{cai2014complexity}. Instead of realizing a single non-degenerate $h$, we use a generalized mating gadget to realize $h^Th$ in a planar way. As $h$ is non-degenerate, $h^Th$ is non-degenerate as well and this completes our proof.
\begin{lemma}
     Suppose $f\in \mathcal{F}$ of arity $k$ is a non-degenerate signature. Then there exists a non-degenerate binary signature $g$ such that Pl-Holant$(\mathcal{F}\cup\{g\}|[1,0],[0,0,1],[1,0,1])\le_T$Pl-Holant$(\mathcal{F}|[1,0],[0,0,1],[1,0,1])$.
    \label{lem:plr3obtainh}
\end{lemma}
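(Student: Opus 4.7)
The plan is to adapt the non-planar Lemma 6.2 of \cite{cai2014complexity} to the planar setting by replacing its direct realization of a non-degenerate binary signature $h$ with a realization of $g = h^T h$ via a generalized mating gadget on two copies of $f$. When $h$ is a non-degenerate binary ($2\times 2$) matrix, $\det(h^T h)=(\det h)^2\neq 0$, so non-degeneracy of $h$ transfers to $h^T h$, giving the required non-degenerate binary $g$.

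First I would recall the non-planar construction: the proof of Lemma 6.2 of \cite{cai2014complexity} produces from a non-degenerate $f$ of arity $k$ a non-degenerate binary $h$ by an induction on $k$ combining pinnings (via $[1,0]$ and $[0,1]$) with self-contractions (via $[1,0,1]$). On two copies of $f$, I would translate each step of that proof into a role assignment for the generalized mating gadget (as depicted in Figure \ref{fig:gadhandshake2}): a pinning $x_j=0$ becomes Fix-to-$0$ at variable $j$ (implemented by $[1,0]^{\otimes 2}$); a pinning $x_j=1$ becomes Fix-to-$1$ (implemented by a single $[0,0,1]$ connecting variable $j$ of the two copies); a self-contraction becomes Sum-up (implemented by $[1,0,1]$); and one remaining variable is designated Dangling. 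A direct computation then gives
\[
g(x,y) \;=\; \sum_{\alpha} h(\alpha,x)\,h(\alpha,y) \;=\; (h^T h)(x,y),
\]
where $h$ is the binary signature whose realization is guaranteed by the non-planar argument, so $g$ is non-degenerate.

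The main obstacle is verifying planarity of this gadget in the Pl-Holant sense, where the prescribed cyclic order of variables of each copy of $f$ must be respected and both dangling edges (from variable $i$ of each copy) must lie on the outer face. Following Figure \ref{fig:gadhandshake2}, the two copies of $f$ are drawn side by side so that corresponding non-Dangling variables face each other across a gap; the Sum-up connectors $[1,0,1]$ and the Fix-to-$1$ connectors $[0,0,1]$ occupy this gap as non-crossing arcs, the Fix-to-$0$ leaves $[1,0]$ attach locally at each copy, and the two Dangling edges escape outward to the outer face. Up to the standard insertion of $[1,0,1]$ on any direct left-left edge to preserve the bipartite structure, the result is a planar bipartite left-side gadget of $\text{Pl-Holant}(\mathcal{F}\mid\{[1,0],[0,0,1],[1,0,1]\})$ realizing $g$, completing the reduction.
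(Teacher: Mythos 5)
Your gadget is exactly the paper's: two copies of $f$ in a generalized mating gadget, with each pinning of the non-planar argument replicated as Fix-to-$0$ or Fix-to-$1$ and the remaining variables Sum-up; indeed the paper's own summary of its modification to \cite[Lemma 6.2]{cai2014complexity} is precisely your slogan ``realize $h^{\otimes 2}$ and form $g=h^Th$''. The gap is in your non-degeneracy argument. The identity $\det(h^Th)=(\det h)^2$ applies only when the signature of one copy of $f$ \emph{after the pinnings} is already binary, i.e.\ when exactly one variable is Sum-up. But the crucial case of the induction --- when $f^{x_i=c}$ is degenerate for every $i$ and every $c\in\{0,1\}$ --- cannot be reduced to that situation by pinnings: no sequence of pinnings of a single copy of such an $f$ ever produces a non-degenerate binary signature, so there is no binary $h$ for your formula to apply to. (Your translation ``a self-contraction becomes Sum-up'' does not rescue this: a self-loop inside one copy and a Sum-up connection between the two copies are different operations computing different signatures.) In that case the gadget must make all $k-1$ non-dangling variables Sum-up, and its signature is $M^TM$ for the $2^{k-1}\times 2$ matrix $M$ with $M_{\alpha,x}=f(x\alpha)$. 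Over $\mathbb{C}$, $\operatorname{rank}(M)=2$ does \emph{not} imply that $M^TM$ is non-degenerate: for $M$ with columns $(1,\mathfrak{i},0,0)^T$ and $(0,0,1,0)^T$ one gets $M^TM=\begin{pmatrix}0&0\\0&1\end{pmatrix}$, since the bilinear (not Hermitian) form can vanish on a nonzero complex vector.

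The paper closes exactly this case by a structural analysis rather than a rank-transfer argument: assuming every $f^{x_i=c}$ is degenerate, it shows that $f$ is supported on precisely two antipodal strings $0\alpha$ and $1\overline{\alpha}$, with $f(0\alpha),f(1\overline{\alpha})\neq 0$ and $f(0\overline{\alpha})=f(1\alpha)=0$. Then $M$ has exactly two nonzero entries, lying in distinct rows and distinct columns, so the all-Sum-up mating yields $[f(0\alpha)^2,0,f(1\overline{\alpha})^2]$, which is visibly non-degenerate. Your inductive use of Fix-to-$0$/Fix-to-$1$ for the case where some pinning stays non-degenerate, and your planarity discussion, do match the paper; but without the structural analysis of the all-pinnings-degenerate case (or an equivalent argument) the proof does not go through.
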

\begin{proof}
    We only need to prove that for each non-degenerate $f$, we can use 2 copies of $f$ to construct a generalized mating gadget which realizes a non-degenerate binary signature $g$ in a planar way.     
    We prove this by induction. If $k=2$, we let one variable be Dangling and the other variable be Sum-up. By this gadget, we realize a non-degenerate $g$ as $f$ is non-degenerate.
    
    Now suppose this lemma holds for all $k<n$ and consider the case when $k=n$. If there exists $1\le i\le k$ such that $f^{x_i=0}$ is non-degenerate, by induction we may use 2 copies of $f^{x_i=0}$ to construct a generalized mating gadget which realizes a non-degenerate binary signature $g$. Then by letting the $i$th variable of $f$ be Fix-to-0, we can use 2 copies of $f$ to construct a generalized mating gadget to realize $g$ in the similar way and we are done. Similarly if there exists $1\le i\le k$ such that $f^{x_i=1}$ is non-degenerate, by letting the $i$th variable of $f$ be Fix-to-1 we can realize a non-degenerate $g$ as well.

    Now we may assume $f^{x_i=c}$ is degenerate for all $1\le i\le k, c\in\{0,1\}$. If $f^{x_1=0}$ remains constant at 0, then $f=[0,1]\otimes f^{x_1=1}$. As $f^{x_1=1}$ is degenerate, $f$ is also degenerate, which is a contradiction. Consequently we may suppose there exists a string $\alpha=\alpha_2\dots\alpha_k\in \{0,1\}^{k-1}$ such that $f^{x_1=0}(\alpha)\neq 0$. 
    
    For each $\beta=\beta_2\dots \beta_k\in \{0,1\}^{k-1}$ satisfying $\beta\neq \overline{\alpha}$, without loss of generality we assume $\alpha_2=\beta_2=c\in \{0,1\}$. Then $f^{x_2=c}=[1,\lambda]\otimes[a_3,b_3]\otimes\dots\otimes[a_k,b_k]$ as $f^{x_2=c}(0\alpha_3\dots\alpha_k)\neq 0$. Here, $\lambda=\frac{f(1\alpha)}{f(0\alpha)}$. Now we have 
    $$f^{x_1=1}(\beta)=f^{x_2=c}(1\beta_3\dots \beta_k)=\lambda f^{x_2=c}(0\beta_3\dots \beta_k)=\lambda f^{x_1=0}(\beta)$$
    
    If there exists a $\gamma\in \{0,1\}^{k-1}$ satisfying $\gamma\neq \alpha,\overline{\alpha}$ and $f^{x_1=0}(\gamma)\neq 0$, then by the similar analysis we have 
    $$f^{x_1=1}(\overline{\alpha})=\frac{f(1\gamma)}{f(0\gamma)}f^{x_1=0}(\overline{\alpha})=\lambda f^{x_1=0}(\overline{\alpha})$$
    This indicates that $f^{x_1=1}(\beta)=\lambda f^{x_1=0}(\beta)$ for each $\beta\in \{0,1\}^{k-1}$, and consequently $f=[1,\lambda]\otimes f^{x_1=0}$ is degenerate, which is a contradiction.

    As a result, for each $\gamma\in \{0,1\}^{k-1}$ satisfying $\gamma\neq \alpha,\overline{\alpha}$, $f^{x_1=0}(\gamma)=0$. Besides, $f^{x_1=1}(\gamma)=\lambda f^{x_1=0}(\gamma)=0$ as well. Consequently, if $\delta\neq 0\alpha,1\alpha,0\overline{\alpha},1\overline{\alpha}$, then $f(\delta)=0$. Since $f^{x_1=0}$ is degenerate, we may write $f^{x_1=0}=[a_2,b_2]\otimes\dots\otimes[a_k,b_k]$. If $f(0\alpha) f(0\overline{\alpha})\neq 0$, then $\prod_{i=1}^k a_ib_i\neq 0$. Since $k\ge 3$, there exists a $\delta \neq \alpha,\overline{\alpha}$ satisfying $f(0\delta)\neq 0$, a contradiction. Thus $f(0\alpha) f(0\overline{\alpha})=0$. Similarly $f(1\alpha) f(1\overline{\alpha})=0$.

    Since $f(0\alpha) \neq 0$, $f(0\overline{\alpha})=0$. If $f(1\overline{\alpha})=0$, then $f=[f(0\alpha),f(1\alpha)]\otimes f^{x_1=0}$ is degenerate, a contradiction. Consequently $f(1\overline{\alpha})\neq 0$, $f(1\alpha)=0$. We use 2 copies of $f$ to construct a generalized mating gadget. We let $x_1$ be Dangling and all other $k-1$ variables be Sum-up. By this gadget, we realize $g=[f^2(0\alpha),0,f^2(1\overline{\alpha})]$ which is a non-degenerate signature.
\end{proof}

Now we are ready to prove Theorem \ref{plRDCSP}.

\begin{proof}[Proof of Theorem \ref{plRDCSP}]
    When $\mathcal{F}\subseteq \mathscr{A}$ or $\mathcal{F}\subseteq \mathscr{P}$ or $\mathcal{F}\subseteq \widehat{\mathscr{M}}$, the polynomial-time algorithms of these tractable cases can still be applied, hence Pl-$\#R_D$-CSP$(\mathcal{F})$ is still computable in polynomial time in these cases. Now we suppose none of these statements holds for $\mathcal{F}$. As a result, there exists a non-degenerate signature $f\in \mathcal{F}$.
    
    By connecting a $=_k$ signature to a $=_3$ signature via $=_2$, we realize $=_{k+1}$. By induction, we can realize $=_k$ for arbitrary $k$ given $=_2$ and $=_3$. Consequently, the following reduction holds.
    $$\text{Pl-\#CSP}(\mathcal{F})\le_T \text{Pl-}R_D\text{-\#CSP}(\mathcal{F}\cup\{=_2\})$$

    By Lemma \ref{lemcsp=hol} and Theorem \ref{thmHT}, $\text{Pl-}R_D\text{-\#CSP}(\mathcal{F}\cup\{=_2\})\equiv_T \text{Pl-Holant}(\mathcal{F}\cup\{=_2\}|\{=_1,\dots,=_D\})\equiv_T \text{Pl-Holant}(\widehat{\mathcal{F}}\cup\{=_2\}|\{[1,0],[1,0,1],\dots,\widehat{=_D}\})$. As $f\in \mathcal{F}$ is non-degenerate, $\widehat{f}$ is non-degenerate as well, since otherwise a contradiction would occur. Consequently, by Lemma \ref{lem:plr3obtain=2}, Lemma \ref{lem:plr3obtaind1} and Lemma \ref{lem:plr3obtainh}, there exists a non-degenerate binary signature $h$ such that $\text{Pl-Holant}(\widehat{\mathcal{F}}\cup\{=_2\}|\{[1,0],[1,0,1],\dots,\widehat{=_D}\})\le_T \text{Pl-Holant}(\widehat{\mathcal{F}}\cup\{h\}|\{[1,0],[1,0,1],\dots,\widehat{=_D}\})\le_T \text{Pl-Holant}(\widehat{\mathcal{F}}|\{[1,0],[1,0,1],\dots,\widehat{=_D}\})$. Again, by Lemma \ref{lemcsp=hol} and Theorem \ref{thmHT} we have $$\text{Pl-Holant}(\widehat{\mathcal{F}}|\{[1,0],[1,0,1],\dots,\widehat{=_D}\})\equiv_T \text{Pl-Holant}(\mathcal{F}|\{=_1,\dots,=_D\})\equiv_T \text{Pl-}R_D\text{-\#CSP}(\mathcal{F}).$$

    The above analysis shows that $\text{Pl-\#CSP}(\mathcal{F})\le_T\text{Pl-}R_D\text{-\#CSP}(\mathcal{F})$. When none of $\mathcal{F}\subseteq \mathscr{A}$, $\mathcal{F}\subseteq \mathscr{P}$ and $\mathcal{F}\subseteq \widehat{\mathscr{M}}$ holds, $\text{Pl-\#CSP}(\mathcal{F})$ is \#P-hard, as well as $\text{Pl-}R_D\text{-\#CSP}(\mathcal{F})$. This completes the proof.
\end{proof}

\end{document}